
\documentclass[a4paper, 11pt, reqno, twoside]{amsart}
\numberwithin{equation}{section}

\usepackage[T1]{fontenc}
\usepackage[latin1]{inputenc}

\usepackage{graphicx}
\usepackage{epstopdf}

\usepackage{color}
\usepackage[font=scriptsize,labelfont=bf]{caption}

\usepackage{amssymb,amscd,amsmath} 
\usepackage{pdfsync}



\newcommand{\LL}{\mathcal{L}}

\newcommand{\bigOh}{\mathcal O}
\newcommand{\N}{\mathbb N}
\newcommand{\C}{\mathbb C}
\newcommand{\R}{\mathbb R}
\newcommand{\s}{\mathbb S}

\newcommand{\Z}{\mathbb Z}
\newcommand{\Schwartz}{\mathcal{S}}

\newcommand{\diff}{{\mathrm d}}

\DeclareMathOperator{\dist}{dist}
\DeclareMathOperator{\Diff}{D}

\DeclareMathOperator{\F}{{\mathcal F}}

\DeclareMathOperator{\sign}{sgn}
\DeclareMathOperator{\supp}{supp}
\DeclareMathOperator{\im}{Im}
\DeclareMathOperator{\re}{Re}

\newtheorem{theorem}{Theorem}[section]
\newtheorem{lemma}[theorem]{Lemma}
\newtheorem{corollary}[theorem]{Corollary}
\newtheorem{proposition}[theorem]{Proposition}

\newtheorem*{acknowledgments}{Acknowledgments}

\theoremstyle{definition}
\newtheorem{remark}[theorem]{Remark}
\newtheorem{definition}[theorem]{Definition}


\title[On Whitham's conjecture]{\small On Whitham's conjecture of a highest cusped wave for a nonlocal dispersive equation}
\thanks{ME was supported by grant nos.~231668 and 250070 from the Research Council of Norway; EW by grant nos.~621-2012-3753 and 2016-04999 from the Swedish Research Council.}

\author{Mats Ehrnstr\"om}
\address{Department of Mathematical Sciences, Norwegian University of Science and Technology, 7491 Trondheim, Norway}
\email{mats.ehrnstrom@math.ntnu.no}

\author{Erik Wahl\'en}
\address{Centre for Mathematical Sciences, Lund University, PO Box 118, 221\,00 Lund, Sweden\\}
\email{erik.wahlen@math.lu.se}

\renewcommand{\SS}{\mathcal{S}}

\begin{document}

\maketitle

\begin{abstract}
 We consider the Whitham equation \(u_t + 2u u_x+Lu_x = 0\), where \(L\) is the nonlocal Fourier multiplier operator given by the symbol \(m(\xi) = \sqrt{\tanh \xi /\xi}\). G. B. Whitham conjectured that for this equation there would be a highest, cusped, travelling-wave solution. We find this wave as a limiting case at the end of the main bifurcation curve of \(P\)-periodic solutions, and give several qualitative properties of it, including its optimal \(C^{1/2}\)-regularity. An essential part of the proof consists in an analysis of the integral kernel corresponding to the symbol \(m(\xi)\), and a following study of the highest wave. In particular, we show that the integral kernel corresponding to the symbol \(m(\xi)\) is completely monotone, and provide an explicit representation formula for it.  Our methods may be generalised.
\end{abstract}

\section{Introduction}
\noindent In 1967, G.B. Whitham  proposed in  \cite{0163.21104} a nonlocal shallow water wave model for capturing the balance between linear dispersion and nonlinear effects, so that one would have smooth periodic  and solitary waves, but also the features of wave breaking and surface singularities. To accomplish that he considered the symbol 
\[
m(\xi) = \sqrt{{\textstyle \frac{\tanh{\xi}}{\xi} }},
\]
arising as the full frequency dispersion for linear gravity water waves on finite depth, and its inverse Fourier transform,
\begin{equation}\label{eq:K}
K(x) = \frac{1}{2\pi}\int_\R m(\xi) \exp(ix\xi)\,\diff\xi.
\end{equation}
If one denotes by  \(L \colon f \mapsto K \ast f\)
the action by convolution with the kernel \(K\), the \emph{Whitham equation} is the nonlinear, nonlocal evolution equation
\begin{equation}\label{eq:whitham}
u_t + (Lu  + u^2)_x = 0.
\end{equation}
While many  shallow water-wave equations can be written in this form, their symbols are generally leading order approximations of the exact linear dispersion \(m(\xi)\), and therefore behaves radically different for large frequencies \(\xi\); a typical example is the Korteweg--de~Vries equation, whose symbol \(1 - \frac{1}{6}\xi^2\) consists of the two first terms in the Maclaurin series for \(m(\xi)\).  The goal of introducing the operator \(L\), on the other hand, was to weaken the dispersion so as to allow also for solutions with singularities. 

As it turns out, Whitham was correct: the equation \eqref{eq:whitham} features  solitary waves, wave breaking, and, as we will show, periodic waves with a sharp crest. This was not clear. In fact, the operator \(L\) is not only weaker, but much weaker than that of both the KdV equation and almost any other shallow water wave equation, so that the existence of solitary waves was, until recently \cite{EGW11}, an open problem. So was wave breaking (see just below), and the existence of a highest, cusped, travelling wave.

Singularities in solutions of \eqref{eq:whitham} appear in  at least two forms: in the form of wave breaking when the spatial derivative of a bounded solution blows up in the evolution problem, and in the form of a  sharp crests for a travelling wave. Although the idea behind wave breaking was introduced already by Seliger \cite{0159.28502}, the full details for the Whitham equation were settled much later, with \cite{0802.35002}, \cite{MR1668586} and \cite{Hur15}. We, however, shall be concerned with steady waves.

In steady variables \(\varphi(\tilde x) = u(x-\mu t)\) the Whitham equation takes the form
\begin{equation}\label{eq:steadywhitham}
-\mu \varphi + L\varphi +\varphi^2 = 0,
\end{equation} 
where the equation has been integrated once, and the constant of integration set to zero. There is no loss of generality in doing so, since the Galilean change of variables
\[
\varphi \mapsto \varphi + \gamma, \qquad \mu \mapsto \mu + 2 \gamma, \qquad \lambda \mapsto \lambda + \gamma(1-\mu-\gamma),
\] 
maps solutions of \(-\mu \varphi + L\varphi +\varphi^2 = \lambda\) to solutions of a new equation of the same form. The equation \eqref{eq:steadywhitham} can be rigorously justified as a model for shallow water waves travelling rightward with a permanent form and a constant, nondimensionalised, wave speed \(\mu\) \cite{arXiv170509744}, and may also be obtained from the Euler equations via an exponential scaling \cite{MR3390078}. We shall deal with \eqref{eq:steadywhitham} somewhat generally.  \emph{With a solution of the steady Whitham equation we denote a real-valued, continuous and bounded function \(\varphi\) that satisfies \eqref{eq:steadywhitham} almost everywhere.} As Whitham himself conjectured in \cite[p. 479]{MR1699025} (here,  the notation has been changed to match that of~\eqref{eq:steadywhitham}), 
\begin{quote}
\ldots it seems reasonable to assume that in fact a critical height is reached when \(\varphi = \frac{\mu}{2}\). If \(K(x)\) behaves like \(|x|^p\) as \(x \to 0\) and \(\varphi(x)\) behaves like \(\frac{\mu}{2} - |x|^q\), a local argument in \eqref{eq:steadywhitham} suggests that \(2q - 1 = p + q\); hence \( q = p +1\). According to this, the crest would be cusped with \(\varphi \sim \frac{\mu}{2} - |x|^{1/2}\) for \(K\). \footnote{The unit constant in front of \(|x|^{1/2}\) seems to be a computational mistake, cf. \eqref{eq:conjecture}.}
\end{quote}
The simplicity in Whitham's formal argument is striking, even the more so as the equation easily eludes any first attempts at obtaining such a cusped, highest, wave. 

Even though the kernel $K$ in \eqref{eq:K} is real, even, and smooth for all $x \neq 0$ with derivates of rapid decay, as made precise in Proposition~\ref{prop:decay}, Proposition~\ref{prop:kernel decomposition} and Corollary~\ref{cor:improved asymptotics}, it is also singular at the origin, causing nontrivial problems when one wants to analyse it.\footnote{Whitham later approximated the exact kernel \(K\) with a continuous exponential function, resulting in a different equation, known as the Burgers--Poisson equation. That equation has a stronger dispersion than \eqref{eq:whitham}.} We  approach \(K\)  by investigating the signs of its derivatives, taking a route via complex analysis and the theory of completely monotone functions. As it turns out, the kernel \(K\) can be understood via both the theory of Stieltjes functions and the theory of positive definite functions, depending on whether one considers the Laplace or the Fourier transform, respectively. As a by-product of our study we obtain a closed formula for the kernel \(K\) in physical space, as well as for its periodisation. It is worth noting that \(K\)  appears in the classical water-wave problem, as well as in the derivation of numerous  dispersive equations \cite{MR3060183}, so that our analysis  will be useful in these settings as well. 

Building on the results for the integral kernel \(K\) we are able to prove the main result of this paper: the existence of a highest, cusped and periodic travelling-wave solution of \eqref{eq:steadywhitham}, monotonically increasing and smooth between its sole trough and crest in a half-period, and belonging to the H\"older space \(C^{1/2}(\R)\) --- but to no smaller space in the same scale. The proof  thereof has two main components. The construction of a global, locally analytic, curve of sinusoidal, periodic smooth waves along which \(\max\varphi \to \mu/2\) on the one hand, and a detailed analysis of solutions satisfying \(\max \varphi = \mu/2\) on the other. The first part is attained via analytic global bifurcation theory developed by Buffoni, Dancer and Toland \cite{MR1956130}, where we rule out all alternatives along the main bifurcation curve but \(\max\varphi \to \mu/2\), including in particular that the curve could return to a line of constant, but nonzero, solutions (see Figure~\ref{fig:bifurcation} on p.~\pageref{fig:bifurcation} for a qualitative picture of the bifurcation diagram as a whole). It is then straightforward to find a subsequence of waves converging to a solution with \(\max \varphi = \mu/2\), and we use elliptic properties of the equation to rule out  the possibility of the wave speed \(\mu\) vanishing in the limit. 

\begin{figure}
\begin{center}
\includegraphics[width=0.5\linewidth]{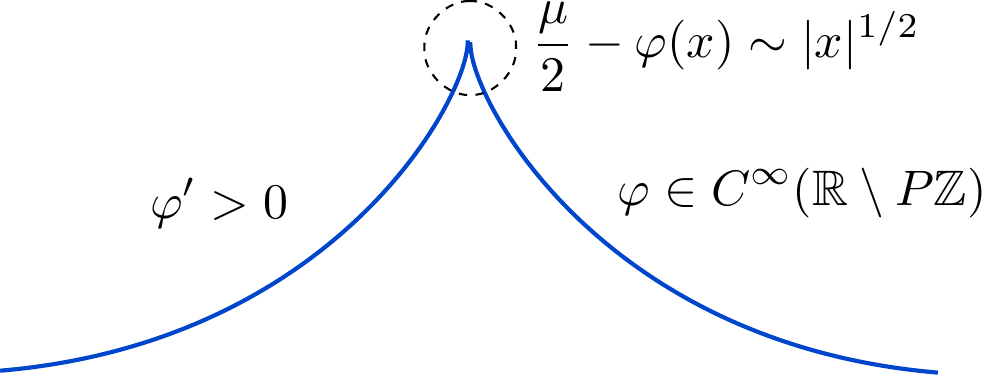}
\caption{The highest wave found in Theorem~\ref{thm:main}. This travelling-wave solution of \eqref{eq:steadywhitham} is obtained as a limit along the main global bifurcation curve established in Theorem~\ref{thm:global_whit}. By construction, the solution is \(P\)-periodic, even, and strictly increasing on the interval \((-P/2,0)\), satisfying \(\varphi(0) = \frac{\mu}{2}\).  As proved in Theorem~\ref{thm:regularity II}, it is furthermore smooth  away from any crest, and obtains its optimal H\"older regularity \(C^{1/2}(\R)\) exactly at the crest. }
\label{fig:highest}
\end{center}
\end{figure}

For an in-depth analysis of the resulting limiting wave a detailed study of the integral equation appears to be unavoidable. Functional-analytic arguments provide us with \(C^\alpha\)-regularity for any \(\alpha < 1/2\), but  not better. To improve our estimates we use several differing  ways of expressing \(\varphi(x+h) - \varphi(x-h)\), which makes it possible to move first- and second-order differences between \(K\) and \(\varphi\) in the integrals that appear. We first move second-order differences to \(K\) and use the \(C^\alpha\)-regularity of \(\varphi\), \(\alpha < 1/2\), to get \(C^{1/2}\)-regularity exactly at the crest. We then place one first-order difference on \(\varphi\) and one on \(K\) to deploy an interpolation argument between the global \(C^{\alpha}\)-regularity and the  \(C^{1/2}\)-regularity exactly at the top, to obtain global \(C^{1/2}(\R)\)-regularity. The highest wave is qualitatively depicted in Figure~\ref{fig:highest}. We conjecture that it is everywhere convex and satisfies 
\begin{equation}\label{eq:conjecture}
\varphi = {\textstyle\frac{\mu}{2}} - {\textstyle \sqrt{\frac{\pi}{8}}}|x|^{1/2} + o(x) \quad\text{ as }\quad x \to 0, 
\end{equation}
but a proof of these facts has so far evaded us. If such a formula holds, then one can show that the constant in front of \(|x|^{1/2}\) must indeed take the value \(\sqrt{\pi /8}\).

Some comments on related recent work on the Whitham equation not mentioned above. The equation \eqref{eq:whitham} features the same kind of Benjamin--Feir instability as the full Euler equations \cite{MR3298879,MR3226084}, although its uni-directional character excludes other (small-amplitude, high-frequency) instabilities seen in the Euler equations \cite{DT15}. It is locally well-posed in \(H^{3/2+}\), in both the periodic setting and on the line \cite{MR3375167}, but a large-time existence theory is so far lacking for equations with a generic nonlinearity and such weak dispersion, see \cite{LannesSaut13} and \cite{MR3188389}. As described above, waves with sufficiently large inclination will eventually break, and numerical data indicates that the form of breaking waves mimics that of the highest wave constructed in this paper \cite{MR3317254}. The results presented in this paper are in turn based on \cite{EK08} and \cite{EK11}, in which global branches of periodic, but smooth, periodic solutions were analytically constructed and numerically investigated. 

The outline of our investigation is as follows. In Section~\ref{sec:K} we inspect the integral kernel \(K\) corresponding to the symbol \(m(\xi)\). Although some of our results are valid for general completely monotone functions, the  results with most consequence for our further investigation  are Propositions~\ref{prop:g is Stieltjes} and~\ref{prop:Whitham formula}, where we prove that \(K\) is completely monotone---meaning that all its odd derivatives are negative on a half-line, and contrariwise for the even derivatives---and give a closed formula for it. In Section~\ref{sec:Kp} we continue the study of the integral operator \(L\), now for the periodised integral kernel \(K_P\), and give some useful properties of  \(L\) in general. Interestingly,  \(K_P\) is completely monotone as well, on a half-period (this is not a coincidence, but a general fact for integrable completely monotone functions). A closed formula for the periodised kernel is given in Corollary~\ref{cor:periodic formula}. 

In Section~\ref{sec:nodal} we prove some general lemmas about solutions of \eqref{eq:steadywhitham}, whereof the most important to us is Theorem~\ref{thm:nodal}, which establishes the nodal properties of solutions along the main bifurcation branch to be constructed. As it turns out, \eqref{eq:steadywhitham} satisfies a  maximum principle (touching lemma), making it resemblant of  an elliptic equation. The nodal properties are essential in avoiding the closed-loop alternative in the global bifurcation analysis,  but they also give information about the waves in their own right. 

Section~\ref{sec:singularity} is the main part of the paper, in the sense that it contains the a priori analysis the highest wave. It is also the most technical, making use of both Besov spaces and, mostly, of integral estimates adapted for the assumed optimal regularity of the wave. Since \(K(x) \sim |x|^{-1/2}\) and we expect \(\frac{\mu}{2} - \varphi \sim |x|^{1/2}\), both relations for small values of \(x\), one difficulty is that the integral \(\int K^\prime(y) (\frac{\mu}{2} - \varphi(y))\,\diff y\) diverges exactly at the expected regularity; another is that the point  where \(\varphi = \frac{\mu}{2}\) must be treated separately from other points. The main results of Section~\ref{sec:singularity} are summarised in Theorem~\ref{thm:regularity II} about the regularity of the highest wave. Lastly, we revisit in Section~\ref{sec:global}  the bifurcation analysis from \cite{EK08, EK11}, ultimately proving that there is a sequence of waves converging to a wave of greatest height \(\varphi(0) = \frac{\mu}{2}\), with a nontrivial wave speed \(\mu \in (0,1)\). We underscore that several parts of Section~\ref{sec:global} are new with respect to \cite{EK08, EK11}, including the bifurcation formulas given in the proof of Theorem~\ref{thm:local_whit}. The methods developed in this paper may be generalised to other dispersive equations. 

Finally, the existence of a highest, cusped travelling-wave solution of the Whitham equation was announced earlier in \cite{E15_highest} (without proofs). The paper at hand provides a complete account of this fact, as well as several improvements---the most eminent examples being the regularity of the highest wave, and the properties of the kernels \(K\) and \(K_P\).

\section{Completely monotone functions and the integral kernel \(K\)} \label{sec:K}
In this section we investigate the properties of the integral kernel \(K\) in \eqref{eq:K}. Two routes towards understanding this transform are described---via positive definite functions (related to the Fourier transform), and via completely monotone functions (related to the Laplace transform). We start our exposition with a survey and analysis of completely monotone functions in general, whereafter the applications to the Whitham symbol \(m\) are investigated. Among other things, we obtain complete monotonicity and an explicit series expression for the Whitham kernel \(K\). 
 
\subsection*{Regularity properties}
Recall \eqref{eq:K}. We consider in this paper the Fourier transform as a continuous isomorphism  $\F \colon \Schwartz^\prime(\R) \to \Schwartz^\prime(\R)$ on the space \(\Schwartz^\prime(\R)\) of tempered distributions, defined by duality from the Fourier transform on the Schwartz space \(\Schwartz(\R)\) of smooth and rapidly decaying functions. Our normalisation of \(\F\) is
\[
(\F f)(\xi) = \int_{\R} f(x) \exp(-ix\xi) \,\diff x \quad\text{ for }\quad f \in \Schwartz(\R),
\]
which implies that \((\F^{-1} f)(x)=\frac1{2\pi} (\F f)(-x)\).
Clearly \(m\in \Schwartz^\prime(\R)\), whence \(K\) exists at least as an element of \(\Schwartz^\prime(\R)\). 
However, since \(m\) is smooth and all of its derivatives are integrable, 
$K$ is actually smooth for $x\ne 0$, and all its derivatives have rapid decay.
In fact, since $m$ is analytic in a strip containing the real axis, $K$ and all of its derivatives are exponentially decaying.

\begin{proposition}\label{prop:decay}
For any fixed $s_0 \in (0, \pi/2)$, $n\ge 0$, one has
\[
|\Diff_x^{n} K(x)| \lesssim \exp(-s_0 |x|)
\]
for all $|x|\ge 1$.
\end{proposition}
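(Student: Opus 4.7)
The plan rests on analytically extending $m$ into a complex strip about $\R$ and shifting the Fourier contour. Since $\cosh$ vanishes only at $\xi=i(k+\tfrac12)\pi$, $k\in\Z$, the function $\tanh(\xi)/\xi$ is meromorphic on $\C$ with a removable singularity at the origin and no poles in the open strip $\Omega:=\{\xi\in\C:|\im\xi|<\pi/2\}$. Its zeros lie at $\xi=ik\pi$ for $k\ne 0$, all outside $\Omega$, so a single-valued analytic branch of $m(\xi)=\sqrt{\tanh(\xi)/\xi}$ exists on $\Omega$ and agrees with the real-valued definition on $\R$. Because $\tanh(\xi)\to\pm 1$ exponentially as $\re\xi\to\pm\infty$, uniformly on any closed substrip $\{|\im\xi|\le s\}$ with $s<\pi/2$, one obtains the uniform asymptotics $m(\xi)=O(|\xi|^{-1/2})$ and, by iterated differentiation of the identity $m^2=\tanh(\xi)/\xi$, $m^{(k)}(\xi)=O(|\xi|^{-1/2-k})$ on such substrips.

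Fix $s_0\in(0,\pi/2)$ and $n\ge 0$, and consider $x\ge 1$; the case $x\le -1$ is handled by symmetry. Starting from the tempered-distribution identity
\[
\Diff_x^n K(x)=\frac{1}{2\pi}\int_\R(i\xi)^n m(\xi)\,\e^{ix\xi}\,\diff\xi,
\]
I integrate by parts $N$ times in $\xi$ for any $N>n+\tfrac12$: the decay estimates above ensure that boundary terms vanish in the truncated sense and that $\partial_\xi^N\bigl[(i\xi)^n m(\xi)\bigr]=O(|\xi|^{n-1/2-N})$, which is absolutely integrable. Then by Cauchy's theorem I deform the contour from $\R$ to $\R+is_0\subset\Omega$; the two vertical connecting segments at $\re\xi=\pm R$ contribute integrals bounded by $s_0\cdot O(R^{n-1/2-N})\to 0$ as $R\to\infty$, producing
\[
\Diff_x^n K(x)=\frac{(-1)^N\e^{-s_0 x}}{2\pi(ix)^N}\int_\R\partial_\xi^N\bigl[(i\eta)^n m(\eta)\bigr]_{\eta=\xi+is_0}\e^{ix\xi}\,\diff\xi.
\]
The remaining integral is bounded uniformly in $x$ by the first paragraph, so $|\Diff_x^n K(x)|\le C_{n,N}|x|^{-N}\e^{-s_0 x}\lesssim\e^{-s_0 x}$ for $x\ge 1$. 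Deforming instead to $\R-is_0$ handles $x\le -1$ with the same bound.

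The main obstacle is the uniform derivative asymptotics of $m$ on horizontal lines in $\Omega$: these underpin both the integrations by parts and the vanishing of the vertical connecting segments in the contour deformation. They reduce to the elementary behavior of $\tanh$ in a strip, tracked through the square root and the factor $1/\xi$; this bookkeeping is routine but necessary. Once the strip-wise decay is established, the rest of the argument is Cauchy's theorem plus sufficiently many integrations by parts.
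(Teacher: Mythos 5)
Your overall strategy (analytic continuation of $m$ into the strip $|\im\zeta|<\pi/2$, contour shift to $\R+is_0$, then integration by parts) is exactly the paper's, and your bookkeeping for the uniform strip estimates $m^{(k)}(\xi)=O(|\xi|^{-1/2-k})$ is fine. The $n=0$ case is handled correctly. However, the treatment of the derivatives $n\ge 1$ has a genuine gap.

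You start from the identity $\Diff_x^n K(x)=\frac{1}{2\pi}\int_\R(i\xi)^n m(\xi)e^{ix\xi}\,\diff\xi$ and claim that after integrating by parts $N$ times (with $N>n+\tfrac12$) ``the boundary terms vanish in the truncated sense.'' This is false for $n\ge 1$. The symbol $(i\xi)^n m(\xi)\sim|\xi|^{n-1/2}$ grows, so the first boundary term from integration by parts on $[-R,R]$ is of order $R^{n-1/2}$ and does not tend to zero; more fundamentally, the truncated integral $\int_{-R}^R(i\xi)^n m(\xi)e^{ix\xi}\,\diff\xi$ does not converge as $R\to\infty$ for $n\ge 1$ (the leading contribution oscillates with amplitude $\sim R^{n-1/2}$). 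So the right-hand side of your starting identity, interpreted as a principal-value limit of truncated integrals, does not exist, and the subsequent integrations by parts and contour shift are not justified as written. The same growth problem persists after shifting to $\R+is_0$, so the order in which you perform the two operations does not help.

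The paper avoids this issue by a small but decisive reformulation: it estimates $x^n\Diff_x^n K(x)$ instead of $\Diff_x^n K(x)$. Its Fourier symbol is (up to sign and powers of $i$) $\Diff_\xi^n(\xi^n m(\xi))$, which remains $O(|\xi|^{-1/2})$ with derivatives decaying one order faster per differentiation, uniformly on closed substrips. The $n=0$ argument — one integration by parts to kill the boundary term, contour shift by Cauchy's theorem — then applies verbatim, giving exponential decay of $x^n\Diff_x^n K(x)$, which for $|x|\ge 1$ yields exponential decay of $\Diff_x^n K(x)$. If you prefer to keep your formulation, you would instead need to regularize the improper integral (e.g.\ insert a factor $e^{-\varepsilon\xi^2}$ or a Schwartz cutoff, do the contour shift and integrations by parts on the absolutely convergent regularized integrals, and pass to the limit $\varepsilon\to0$), rather than relying on truncated integrals whose boundary terms do not actually vanish.
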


\begin{remark}
Throughout this paper, \(\lesssim\) and \(\gtrsim\) shall indicate inequalities that hold up to a uniform positive factor. When the factor involved depends on some additional parameter or function, this will be indicated with subscripts such as \(\gtrsim_\mu\).
\end{remark}

\begin{remark}
More precise asymptotics for \(K(x)\) as \(|x|\to \infty\) is given in Corollary \ref{cor:improved asymptotics} below.
\end{remark}

\begin{proof}
Since $K$ is even it suffices to consider $x\ge 1$.
Note that the integral 
\[
\int_\R \exp(ix\xi) m(\xi)\, \diff\xi
\]
converges conditionally.
Indeed,
\begin{align*}
\int_{-R}^R \exp(ix\xi) m(\xi)\, \diff\xi&=2\int_{0}^R \cos(x\xi) m(\xi)\, \diff\xi\\
&=\frac{2\sin(Rx)m(R)}{x} -\frac{2}{x} \int_0^R \sin(x\xi) m^\prime(\xi)\, \diff\xi\\
&\to -\frac{2}{x} \int_0^\infty \sin(x\xi) m^\prime(\xi)\, \diff\xi
\end{align*}
as $R\to \infty$; 
the latter integral converges absolutely since $m^\prime(\xi)=\mathcal{O}(|\xi|^{-3/2})$ as $|\xi|\to  \infty$. 
The function \(\zeta \mapsto m(\zeta)\) is analytic in \(\C\setminus S\), where $S=\cup_{k=1}^\infty i[k\pi-\pi/2, k\pi]\cup i[-k\pi, -k\pi+\pi/2]$.
Furthermore,
\begin{equation}
\label{eq:tanh bound}
\begin{aligned}
|\tanh(\zeta)|^2
&=\left|\frac{\exp(\xi+is)-\exp(-\xi-is)}{\exp(\xi+is)+\exp(-\xi-is)}\right|^2\\
&=\frac{\exp(2\xi)-2\cos(2s)+\exp(-2\xi)}{\exp(2\xi)+2\cos(2s)+\exp(-2\xi)}\\
&\le \coth^2 \xi \\
&\le \coth^2 \xi_0
\end{aligned}
\end{equation}
when $|\xi|\ge \xi_0>0$, in which $\zeta=\xi+is$. 
Noting that  $\exp(ix\zeta)$ is bounded for \(x > 0\) when $\im \zeta \ge 0$, 
we therefore obtain that
\begin{equation}
\label{eq:deformation estimate}
\lim_{|\xi|\to \infty} \sup_{s \ge 0} |\exp(ix\zeta)m(\xi+is)|=0.
\end{equation}
Fix a number $s_0\in (0, \pi/2)$.
Using Cauchy's theorem on a bounded rectangle with vertices $\pm R$, $\pm R+is_0$, and letting $R\to \infty$, it follows that
\begin{equation}
\begin{aligned}
\label{eq:first deformation}
\int_\R \exp(ix\xi) m(\xi)\, \diff\xi&= 
\int_\R \exp(ix(\xi+is_0))m(\xi+is_0)\, \diff\xi\\
&
=\exp(-xs_0) \int_\R \exp(ix\xi)m(\xi+is_0)\, \diff\xi.
\end{aligned}
\end{equation}
Integrating by parts and using the estimate
\(|\partial_\xi m(\xi+is_0)|=\mathcal{O}(|\xi|^{-3/2})\) as $|\xi|\to \infty$, we obtain the desired exponential decay of $K$.

In order to estimate the derivatives of $K$, we note that $|\F (x^{n} \Diff_x^n K)(\xi)|= |\Diff_\xi^{n} (\xi^n m(\xi))|$,  for any \(n\ge 0\), where $\Diff_\xi^{n} (\xi^n m(\xi))$ extends analytically to the strip $0\le \im \zeta <\pi/2$ and satisfies the estimate \(|D_\xi^{n+k} ((\xi+is_0)^n m(\xi+is_0))|=\mathcal{O}(|\xi|^{-1/2-k})\) as $|\xi|\to \infty$, for any $k\ge 0$ and $s_0\in (0, \pi/2)$.
Repeating the above argument, we obtain exponential decay for $x^n \Diff_x^n K(x)$ and hence also for $\Diff_x^n K$.
\end{proof}

Due to the fact that \(m\not \in L^1(\R)\), it follows that \(K\) is singular at the origin. We can give a precise description of this singularity as follows.

\begin{proposition}
\label{prop:kernel decomposition}
The Whitham kernel satisfies
\[
K(x)=\frac{1}{\sqrt{2\pi|x|}}+K_\text{reg}(x),
\]
where $K_\text{reg}$ is real analytic on $\R$. 
\end{proposition}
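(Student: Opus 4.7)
The plan is to isolate the leading high-frequency asymptotic $m(\xi)\sim|\xi|^{-1/2}$ and to recognise the corresponding classical Fresnel-type identity
\[
\frac{1}{\pi}\int_0^\infty \xi^{-1/2}\cos(x\xi)\,\diff\xi = \frac{1}{\sqrt{2\pi|x|}}, \qquad x\ne 0,
\]
as precisely the advertised singular part of the Whitham kernel.

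First, using that $m$ is even, I would write $K(x) = \frac{1}{\pi}\int_0^\infty m(\xi)\cos(x\xi)\,\diff\xi$, interpreted as a conditionally convergent improper integral exactly as justified in the proof of Proposition~\ref{prop:decay}. For $\xi>0$ the factorisation $m(\xi)=\xi^{-1/2}\sqrt{\tanh \xi}$ combined with the splitting $\sqrt{\tanh \xi}=1+(\sqrt{\tanh \xi}-1)$ suggests setting
\[
K_\text{reg}(x):=\frac{1}{\pi}\int_0^\infty \xi^{-1/2}\bigl(\sqrt{\tanh \xi}-1\bigr)\cos(x\xi)\,\diff\xi.
\]
From $\tanh \xi-1=-2\e^{-2\xi}+\bigOh(\e^{-4\xi})$ one obtains that $\sqrt{\tanh \xi}-1$ is bounded at the origin and $\bigOh(\e^{-2\xi})$ at infinity, so this integral converges absolutely. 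Subtracting it from the (conditionally convergent) expression for $\pi K(x)$ leaves the explicit integral $\int_0^\infty \xi^{-1/2}\cos(x\xi)\,\diff\xi = \sqrt{\pi/(2|x|)}$, yielding the pointwise identity $K(x)=\frac{1}{\sqrt{2\pi|x|}}+K_\text{reg}(x)$ for $x\ne 0$.

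To show that $K_\text{reg}$ is real analytic---in particular across $x=0$---I would extend the defining integral to complex $z=x+iy$ via the estimate $|\cos(z\xi)|\le\cosh(|y|\xi)$. For $|y|<2$ the integrand is then dominated by an $L^1$-function of $\xi$ with integrable $\xi^{-1/2}$-singularity at zero and $\xi^{-1/2}\e^{-(2-|y|)\xi}$-decay at infinity. Differentiation under the integral sign, or equivalently Morera's theorem applied after Fubini, then shows that $K_\text{reg}$ is holomorphic on the strip $|\im z|<2$, hence real analytic on $\R$.

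The main obstacle is justifying the separation of the singular Fresnel piece from the original conditionally convergent integral defining $K$. It is resolved by the observation that $K_\text{reg}$ itself is absolutely convergent, so the splitting may be performed inside the regularisation procedure already set up in the proof of Proposition~\ref{prop:decay}; in effect, a conditionally convergent integral minus an absolutely convergent one is computed by a change of variable and a Fresnel-type identity. Once that splitting is in place, the analyticity step proceeds by essentially the same strip-shift philosophy used in Proposition~\ref{prop:decay}, only that integrability now has to be controlled near $\xi=0$ rather than only at $\xi=\infty$.
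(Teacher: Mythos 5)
Your proof is correct and takes essentially the same route as the paper: both split $m$ into $|\xi|^{-1/2}$ plus the exponentially decaying remainder $(\sqrt{\tanh|\xi|}-1)/\sqrt{|\xi|}$, identify the inverse transform of the singular part as $1/\sqrt{2\pi|x|}$, and deduce real analyticity of the remainder's transform from its exponential decay. You merely make explicit the details the paper leaves implicit, namely the Fresnel identity and the strip-holomorphy argument via the bound $|\cos(z\xi)|\le\cosh(|\!\im z|\,\xi)$.
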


\begin{proof}
Write
\[
\sqrt{\frac{\tanh \xi}{\xi}}=\frac{1}{\sqrt{|\xi|}}+\frac{\sqrt{\tanh |\xi|}-1}{\sqrt{|\xi|}}.
\]
The first term has inverse Fourier transform $1/\sqrt{2\pi|x|}$, while the second term is integrable and exponentially decaying and hence has a real-analytic transform. 
\end{proof}

\subsection*{Positivity and monotonicity properties: general theory}

Our next aim is to show certain positivity and monotonicity properties of $K$. We begin by proving such results for the Fourier transforms of a general class of functions. In the next subsection, we then show that the Whitham symbol $m$ belongs to this class. Much of the general theory discussed in this section is adapted from the monograph \cite{SchillingSongVondracek12}, although we slightly extend some of it. 
Most importantly, we relate it to the theory of positive definite functions and the kernel \(K\).
\medskip

\begin{definition}
A function $g\colon (0, \infty)\to \R$ is  called {\em completely monotone} if it is of class $C^\infty$ and
\begin{equation}
\label{eq:completely monotone}
(-1)^n g^{(n)}(\lambda)\ge 0
\end{equation}
for all \(n \in \N_0\) and all \(\lambda >0\).
\end{definition}

We shall sometimes say that a function is completely monotone on some interval (typically, a half-period), meaning that \eqref{eq:completely monotone} holds on that interval. Moreover, if \(g\colon \R\setminus \{0\} \to \R\) is even, we shall say that \(g\) is completely monotone if it is completely monotone on the interval \((0,\infty)\).
One of the main reasons for introducing completely monotone functions is that they are precisely the functions which arise as Laplace transforms of measures.
This is known as the Bernstein, or Bernstein--Hausdorff--Widder, theorem. We adopt here the convention that \emph{a measure is always countably additive and positive}.

\begin{theorem}[Bernstein]\label{thm:HBW}
Let $g$ be completely monotone. Then it is the Laplace transform of a unique Borel measure $\mu$ on $[0,\infty)$, i.e.
\begin{equation}
\label{eq:representation of completely monotone functions}
g(\lambda)=\LL(\mu; \lambda):=\int_{[0,\infty)} \exp(-\lambda s) \, \diff\mu(s).
\end{equation}
Conversely, if $\mu$ is a Borel measure on $[0,\infty)$ with $\LL(\mu;\lambda)<\infty$ for every $\lambda>0$, then $\lambda \mapsto 
\LL(\mu;\lambda)$ is a completely monotone function.
\end{theorem}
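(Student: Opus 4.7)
My plan is to prove the two directions separately, starting with the easier converse. Assuming $\mu$ is a Borel measure on $[0,\infty)$ with $\LL(\mu;\lambda)<\infty$ for all $\lambda>0$, I would fix $\lambda_0>0$ and $n\ge 0$, and note that for $\lambda\ge\lambda_0$ the integrand $s^n e^{-\lambda s}$ is uniformly dominated by $s^n e^{-\lambda_0 s}\lesssim_{n,\lambda_0} e^{-\lambda_0 s/2}$, which is $\mu$-integrable by hypothesis. Differentiation under the integral sign, justified by dominated convergence, then gives $g^{(n)}(\lambda)=\int_{[0,\infty)}(-s)^n e^{-\lambda s}\,\diff\mu(s)$, so that $(-1)^n g^{(n)}(\lambda)\ge 0$. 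For uniqueness I would use injectivity of the Laplace transform on positive Borel measures: if $\int e^{-\lambda s}\,\diff(\mu_1-\mu_2)(s)=0$ for all $\lambda>0$, then the substitution $u=e^{-s}$ combined with the Stone--Weierstrass theorem applied to the algebra generated by $\{u^\lambda\}_{\lambda>0}$ on $(0,1]$ forces $\mu_1=\mu_2$.

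For the forward direction I would approximate $g$ by the discrete Post--Widder measures
\[
\mu_n=\sum_{k=0}^\infty \frac{(-n)^k}{k!}g^{(k)}(n)\,\delta_{k/n},\qquad n\ge 1,
\]
whose weights are nonnegative by complete monotonicity. As a preliminary step I would observe that $(-1)^n g^{(n)}$ is nonincreasing (its derivative being $(-1)^n g^{(n+1)}\le 0$), so $g$ is bounded on each $[\lambda_0,\infty)$ and $c:=\lim_{\lambda\to\infty}g(\lambda)\ge 0$ exists; moreover, completely monotone functions are real analytic on $(0,\infty)$, which legitimises Taylor expansion of $g$ about the point $n$. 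A direct Laplace-transform computation based on this expansion, whose sign-controlled remainder vanishes as $n\to\infty$, would yield the pointwise limit $\LL(\mu_n;\lambda)\to g(\lambda)$ for every fixed $\lambda>0$. One can verify the scaling first on the prototype $g(\lambda)=\exp(-s_0\lambda)$, where $\LL(\mu_n;\lambda)=\exp\bigl(-s_0 n+ s_0 n \exp(-\lambda/n)\bigr)\to \exp(-s_0\lambda)$, and then transfer to the general case. The uniform bound $\mu_n([0,R])\le \exp(\lambda_0 R)\sup_n \LL(\mu_n;\lambda_0)$ provides tightness on each bounded interval, and Helly's selection theorem extracts a subsequence converging weakly on compacts to a Borel measure $\mu$ on $[0,\infty)$; dominated convergence identifies $\LL(\mu;\lambda)=g(\lambda)$.

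The main obstacle is the limit $\LL(\mu_n;\lambda)\to g(\lambda)$, which is essentially the content of Post's inversion formula and demands a careful estimate of the Taylor remainder of $g$ at $n$, exploiting the monotonicity of the successive derivatives $(-1)^k g^{(k)}$ to bound the remainder uniformly. A secondary difficulty is the lack of global tightness on the noncompact domain $[0,\infty)$: the limit measure $\mu$ must be shown to be genuinely supported on $[0,\infty)$ without loss of mass at infinity. I would handle this by leveraging the uniform bound on $\LL(\mu_n;\lambda_0)$ together with the exponential decay of $\exp(-\lambda_0 s)$ to suppress the tails---for any $\varepsilon>0$ one can choose $R$ so large that $\mu_n([R,\infty))\le \varepsilon$ uniformly in $n$, ensuring tightness of the full sequence and hence the existence of a limiting Borel measure $\mu$ representing $g$.
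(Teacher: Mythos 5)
The paper does not prove this theorem: it cites it as \cite[Theorem 1.4]{SchillingSongVondracek12}. So there is no paper proof to compare against, and your proposal must be judged on its own.

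Your converse direction is sound (the domination $s^n e^{-\lambda_0 s}\lesssim_{n,\lambda_0}e^{-\lambda_0 s/2}$ justifies differentiating under the integral). One caveat for uniqueness: after the substitution $u=e^{-s}$ the pushforward measure on $(0,1]$ may have infinite mass near $u=1$ (this happens precisely when $g(0^+)=\infty$), so Stone--Weierstrass cannot be applied directly to a possibly infinite signed measure. The standard fix is to first pass to the finite measures $e^{-s}\,\diff\mu_i(s)$, which have Laplace transform $g(\lambda+1)$, deduce equality of these, and then divide out the strictly positive weight $e^{-s}$.

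For the forward direction the outline is coherent but has two genuine gaps. First, the central claim $\LL(\mu_n;\lambda)\to g(\lambda)$ is asserted, not proved. Note that your $\LL(\mu_n;\lambda)=\sum_k \frac{(-x_n)^k}{k!}g^{(k)}(n)$ with $x_n=n e^{-\lambda/n}$ is exactly the Taylor series of $g$ about $n$ evaluated at $n-x_n\to\lambda$. Making this rigorous therefore requires showing the Taylor series converges to $g$ on a radius comparable to $n$, i.e.\ establishing the real-analyticity of completely monotone functions with a quantitative radius --- which is not obviously more elementary than the theorem itself and would need a remainder estimate with uniform control in $n$. The monotonicity of $(-1)^k g^{(k)}$ controls the sign of the remainder but not its size without an additional bound on $(-1)^n g^{(n)}(a)/n!$. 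Second, the tightness claim "$\mu_n([R,\infty))\le\varepsilon$ uniformly in $n$" is false in general: when $g(\lambda)\to\infty$ as $\lambda\searrow 0$ the representing measure is infinite, so the total masses $\mu_n([0,\infty))$ are unbounded and the raw tail $\mu_n([R,\infty))$ cannot be suppressed. What actually works, and suffices for passing the vague limit through the Laplace integral, is the weighted tail estimate
\[
\int_{[R,\infty)} e^{-\lambda s}\,\diff\mu_n(s)\le e^{-(\lambda-\lambda_0)R}\,\LL(\mu_n;\lambda_0)\qquad(\lambda>\lambda_0),
\]
which is uniformly small in $n$ once $\sup_n\LL(\mu_n;\lambda_0)<\infty$ is known. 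You should replace the incorrect tightness statement by this weighted estimate, and you still owe a proof of the Post--Widder convergence for general $g$.
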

For a proof of this result, see \cite[Theorem 1.4]{SchillingSongVondracek12}. A consequence of Bernstein's theorem is that if $g$ is completely monotone, then 
\eqref{eq:completely monotone} holds with strict inequality for every $\lambda$ and every $n$, unless $g$ is identically constant. Note also that  the measure $\mu$ in \eqref{eq:representation of completely monotone functions} is finite if and only if $\lim_{\lambda\searrow 0} g(\lambda)<\infty$.
\medskip

For later use we introduce the following subclass of the completely monotone functions.

\begin{definition}\label{def:Stieltjes function}
A function $g\colon (0, \infty)\to [0,\infty)$ is  called a (nonnegative) {\em Stieltjes function} if it can be written in the form
\begin{equation}
\label{eq:Stieltjes}
g(\lambda)=\frac{a}{\lambda}+b+\int_{(0,\infty)} \frac{1}{\lambda+t}\, \diff\sigma(t),
\end{equation}
where \(a, b\ge 0\) are constants and $\sigma$ is a  Borel measure on $(0, \infty)$ such that $
\int_{(0,\infty)} \frac{\diff\sigma(t)}{1+t}<\infty$.
\end{definition}

Note that if \(g\) has a finite limit at the origin, then \(a=0\) and \(\int_{(0,\infty)} \frac{\diff\sigma(t)}{t}<\infty\) by Fatou's lemma.
Moreover, \(b=\lim_{\lambda \to \infty} g(\lambda)\).
The fact that Stieltjes functions are completely montone is proved in \cite{SchillingSongVondracek12}.

\begin{theorem}{\cite[Theorem 2.2]{SchillingSongVondracek12}}
Stieltjes functions are completely monotone. A completely monotone function is a Stieltjes function if and only if 
the measure \(\mu\) in \eqref{eq:representation of completely monotone functions} is absolutely continuous on \((0,\infty)\) and its Radon--Nikodym derivative is completely monotone.
\end{theorem}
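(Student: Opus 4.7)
The plan is to reduce everything to the Bernstein theorem together with a single application of Fubini. The key identity I would use is
\[
\frac{1}{\lambda+t} = \int_0^\infty \exp(-(\lambda+t)s)\,\diff s, \qquad \lambda,t>0,
\]
which immediately exhibits $\lambda \mapsto 1/(\lambda+t)$ as the Laplace transform of $\exp(-ts)\,\diff s$. Given a Stieltjes function as in \eqref{eq:Stieltjes}, I would apply Tonelli in the integral term to obtain
\[
\int_{(0,\infty)} \frac{1}{\lambda+t}\,\diff\sigma(t)
= \int_0^\infty \exp(-\lambda s)\,\rho(s)\,\diff s, \qquad
\rho(s):=\int_{(0,\infty)}\exp(-ts)\,\diff\sigma(t),
\]
so that $g(\lambda)=b+\int_{[0,\infty)}\exp(-\lambda s)\,\diff\mu(s)$ with
\[
\mu = b\delta_0 + \big(a+\rho(s)\big)\,\diff s
\]
on $[0,\infty)$. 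Interpreted correctly, $g$ is the Laplace transform of $\mu$ (the $b$-term coming from the mass at the origin). The second half of Bernstein's theorem then yields complete monotonicity of $g$; alternatively one can differentiate under the integral once it is verified that the dominated convergence theorem applies for each $n$, using $\diff^n/\diff\lambda^n[1/(\lambda+t)]=(-1)^n n!/(\lambda+t)^{n+1}$ and the integrability condition $\int (1+t)^{-1}\,\diff\sigma(t)<\infty$.

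This same computation already gives the forward direction of the characterisation: on $(0,\infty)$ the representing measure $\mu$ is absolutely continuous with density $a+\rho(s)$, and $a+\rho$ is completely monotone because it is itself the Laplace transform of the (nonnegative) measure $a\delta_0+\sigma$.

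For the converse, suppose $g$ is completely monotone with representing measure $\mu$, and that $\mu|_{(0,\infty)}$ has a completely monotone density, which I will call $\rho$. By Bernstein's theorem applied to $\rho$, there is a Borel measure $\nu$ on $[0,\infty)$ such that $\rho(s)=\int_{[0,\infty)}\exp(-ts)\,\diff\nu(t)$. Splitting off the possible atom at zero, write $\nu=c\delta_0+\nu'$ with $\nu'$ supported in $(0,\infty)$. Reversing the Fubini computation of the first step,
\[
\int_0^\infty \exp(-\lambda s)\rho(s)\,\diff s
= \frac{c}{\lambda} + \int_{(0,\infty)}\frac{1}{\lambda+t}\,\diff\nu'(t),
\]
and setting $b:=\mu(\{0\})=\lim_{\lambda\to\infty}g(\lambda)$, $a:=c$, $\sigma:=\nu'$ exhibits $g$ in the Stieltjes form \eqref{eq:Stieltjes}. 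The required integrability $\int_{(0,\infty)}(1+t)^{-1}\,\diff\sigma(t)<\infty$ is automatic, since finiteness of $g(1)$ forces $\int(1+t)^{-1}\,\diff\nu'(t)<\infty$.

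The work is essentially bookkeeping; the only subtle point, and therefore the main thing I would flag in the writeup, is the careful separation of the point mass at the origin in both representing measures $\mu$ and $\nu$. These atoms correspond precisely to the constants $b$ and $a$ respectively in \eqref{eq:Stieltjes}, and conflating them with the absolutely continuous parts would obscure the equivalence. Everything else is a direct consequence of Bernstein's theorem combined with the elementary Laplace identity above.
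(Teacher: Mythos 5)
The paper itself offers no proof of this theorem --- it is quoted verbatim as \cite[Theorem 2.2]{SchillingSongVondracek12} --- so there is no in-paper argument to compare against. Your proposed proof is correct, and it follows the standard route via the elementary Laplace identity $\frac1{\lambda+t}=\int_0^\infty e^{-(\lambda+t)s}\,\diff s$ combined with Tonelli and Bernstein's theorem, which is essentially how the reference proves it as well. The bookkeeping is sound: in the forward direction the integrability condition $\int_{(0,\infty)}(1+t)^{-1}\,\diff\sigma(t)<\infty$ guarantees both that $\rho(s)=\int e^{-ts}\,\diff\sigma(t)$ is finite for each $s>0$ (because $\sigma$ has finite mass near $0$ and $e^{-ts}\lesssim_s(1+t)^{-1}$ at infinity) and that $a+\rho$ is locally integrable near $s=0$ (since $\int_0^1 e^{-ts}\,\diff s=(1-e^{-t})/t\lesssim (1+t)^{-1}$), so $b\delta_0+(a+\rho(s))\,\diff s$ is a genuine Borel measure and, by uniqueness in Bernstein's theorem, is the representing measure $\mu$; in the converse direction, Tonelli is legitimate because all integrands are nonnegative and $\int_0^\infty e^{-\lambda s}\rho(s)\,\diff s=g(\lambda)-\mu(\{0\})<\infty$, and the integrability of $\sigma=\nu'$ against $(1+t)^{-1}$ indeed falls out of the finiteness of $g(1)$. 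Your emphasis on keeping the atoms at the origin in $\mu$ and $\nu$ (which account precisely for $b$ and $a$, respectively) is exactly the point where an inattentive writeup would slip, and you have handled it correctly. One remark worth adding if you expand this into a full writeup: complete monotonicity of the Radon--Nikodym derivative is a statement about a function defined only almost everywhere, so one should say explicitly that one works with the (unique) continuous representative, which is what Bernstein's theorem delivers.
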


It turns out that any Stieltjes function has an analytic extension to the cut complex plane $\C\setminus (-\infty,0]$. This property gives a complete characterisation of the class of Stieltjes functions.
Let $\C_+=\{z\in \C\colon \im z>0\}$ and $\C_-=\{z\in \C\colon \im z<0\}$. 

\begin{theorem}{\cite[Corollary 7.4]{SchillingSongVondracek12}}
\label{thm:Stieltjes extensions}
Let $g$ be a positive function on $(0,\infty)$. Then $g$ is a Stieltjes function if and only if the limit 
$\lim_{\lambda \searrow 0} g(\lambda)$  exists in $[0,\infty]$ and $g$ extends analytically to  $\C\setminus (-\infty, 0]$ such that 
$\im z \cdot \im g(z)\le 0$.
\end{theorem}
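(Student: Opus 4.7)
The plan is to prove the equivalence in two parts. The easy direction is the forward implication, starting from the Stieltjes representation \eqref{eq:Stieltjes}. For each fixed $t\ge 0$, the function $z\mapsto 1/(z+t)$ is analytic on $\C\setminus(-\infty,0]$, and the integrability condition $\int \diff\sigma(t)/(1+t)<\infty$ allows passage under the integral sign (by dominated convergence together with Morera's theorem) to conclude that $g$ extends analytically to the cut plane. The sign condition follows from the elementary identity $\im(1/(z+t))=-\im z/|z+t|^2$, which yields $\im z\cdot \im(1/(z+t))\le 0$ for every $t\ge 0$; the same sign is obvious for $a/z$ and trivial for the constant $b$, and integration produces $\im z\cdot \im g(z)\le 0$. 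The existence of $\lim_{x\searrow 0} g(x)$ in $[0,\infty]$ is immediate from the monotone convergence theorem applied to the non-negative integrand.

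For the reverse direction, the strategy is to apply a classical Herglotz--Nevanlinna representation. Setting $f(z):=-g(z)$ on the upper half-plane $\C_+$, the hypothesis $\im z\cdot \im g(z)\le 0$ translates to $\im f(z)\ge 0$, so $f\colon \C_+\to\overline{\C_+}$ is an analytic Nevanlinna function and therefore admits the representation
\[
f(z)=\alpha+\beta z+\int_{\R}\Bigl(\frac{1}{t-z}-\frac{t}{1+t^2}\Bigr)\diff\nu(t),
\]
for some $\alpha\in\R$, $\beta\ge 0$ and a positive Borel measure $\nu$ on $\R$ satisfying $\int \diff\nu(t)/(1+t^2)<\infty$. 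Because $g$ extends analytically across $(0,\infty)$ and takes real values there, the Stieltjes inversion formula $\tfrac{1}{\pi}\lim_{\varepsilon\searrow 0}\im f(\cdot+i\varepsilon)\,\diff t=\diff\nu$ forces the support of $\nu$ to lie in $(-\infty,0]$. A change of variable $s=-t$ and a separation $\tilde\nu=a\delta_0+\sigma$ of the point mass at the origin recast the formula into
\[
g(\lambda)=-\alpha-\beta\lambda+\frac{a}{\lambda}+\int_{(0,\infty)}\Bigl(\frac{1}{s+\lambda}-\frac{s}{1+s^2}\Bigr)\diff\sigma(s).
\]
I would then exploit positivity of $g$ together with its behaviour as $\lambda\to\infty$ to conclude $\beta=0$, upgrade to the stronger integrability $\int \diff\sigma(s)/(1+s)<\infty$, and identify the constant $b=-\alpha-\int_{(0,\infty)} s\,\diff\sigma(s)/(1+s^2)\ge 0$, thereby arriving at the Stieltjes form \eqref{eq:Stieltjes}.

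The main technical obstacle is exactly the passage from the Nevanlinna integrability $\int \diff\sigma/(1+s^2)<\infty$ produced by the Herglotz theorem to the Stieltjes integrability $\int \diff\sigma/(1+s)<\infty$ required by Definition~\ref{def:Stieltjes function}, and the simultaneous verification that $b\ge 0$; neither is automatic from Herglotz theory, and both must be forced by the positivity hypothesis on $g$ through a careful asymptotic analysis of $g(\lambda)$ as $\lambda\to\infty$ (and a complementary analysis as $\lambda\searrow 0$ to isolate the coefficient $a$ from the hypothesis that the limit exists in $[0,\infty]$). Once these points are handled, uniqueness of the Nevanlinna measure delivers uniqueness of the Stieltjes triple $(a,b,\sigma)$, completing the correspondence.
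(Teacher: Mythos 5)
The paper does not give a proof of this result: it is quoted directly from \cite[Corollary~7.4]{SchillingSongVondracek12}, so there is no internal argument to compare against. Your sketch follows exactly the Herglotz--Nevanlinna route used in that reference, and both directions are set up correctly. The forward implication is complete as written. For the converse you rightly identify the nontrivial steps (forcing $\beta=0$, upgrading the Nevanlinna integrability $\int \diff\sigma(s)/(1+s^2)<\infty$ to the Stieltjes integrability $\int \diff\sigma(s)/(1+s)<\infty$, and checking $b\ge 0$), but you leave them unproved. They do close, and the cleanest way is to subtract $g(1)$ from $g(\lambda)$, which collapses the Nevanlinna kernel into the honest partial fraction
\[
\frac{1}{s+\lambda}-\frac{1}{s+1}=\frac{1-\lambda}{(s+\lambda)(s+1)},
\]
so that for $\lambda>1$ positivity of $g$ yields the uniform bound
\[
\beta(\lambda-1)+(\lambda-1)\int_{(0,\infty)}\frac{\diff\sigma(s)}{(s+\lambda)(s+1)}\le g(1)+a.
\]
Letting $\lambda\to\infty$ forces $\beta=0$, and Fatou's lemma applied to the second term gives $\int \diff\sigma(s)/(s+1)\le g(1)+a<\infty$. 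Dominated convergence then shows $g(\lambda)\to b$ as $\lambda\to\infty$, so $b\ge 0$ from positivity; and $a\ge 0$ because it is the $\nu$-mass at the origin of a positive measure. One small observation worth recording: once the representation is in hand, the limit $\lim_{x\searrow 0}g(x)$ exists automatically by monotone convergence, so that hypothesis is not actually used in the converse direction; it is genuinely a conclusion of the forward direction and serves in the statement only to make the equivalence read cleanly.
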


\begin{remark}
Note that positive constant functions are examples of Stieltjes functions. It follows easily by basic properties of analytic functions that a nonconstant Stieltjes function maps $\C_+$ to $\C_-$. 
Note also that if $g$ is not identically $0$, then $1/g(z)$ is a Nevanlinna function (also known as Herglotz or Pick functions).
The corresponding function $1/g(\lambda)$ is then a complete Bernstein function, see \cite{SchillingSongVondracek12}.
\end{remark}

It is possible to compute the measure $\sigma$ in \eqref{eq:Stieltjes} using the analytic extension of $g$. The following result follows from \cite[Corollary 6.3]{SchillingSongVondracek12} and the fact that  $\lambda g(\lambda)$ is a complete Bernstein function if $g(\lambda)$ is a Stieltjes function (see \cite[Theorem 6.2]{SchillingSongVondracek12}).

\begin{theorem}
The measure $\sigma$ in \eqref{eq:Stieltjes} can be recovered from $g$ by the formula
\begin{equation}
\label{eq:inversion formula}
\sigma(u,v]=-\lim_{\delta \searrow 0} \lim_{h\searrow 0} \frac{1}{\pi} \int_{u+\delta}^{v+\delta} \im g(-t+ih) \, \diff t, \quad 0<u<v<\infty.
\end{equation}
\end{theorem}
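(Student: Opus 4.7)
The plan is to substitute the Stieltjes representation \eqref{eq:Stieltjes} directly into the right-hand side of \eqref{eq:inversion formula} and interchange limits and integrals. To begin, since $g$ extends analytically to $\C \setminus (-\infty, 0]$ via the Stieltjes representation, I would compute the imaginary part at $z = -t + ih$ for $t, h > 0$: using $\im(1/(z+s)) = -h/((s-t)^2+h^2)$ and $\im(1/z) = -h/(t^2+h^2)$, one obtains
\begin{equation*}
\im g(-t + ih) = -\frac{ah}{t^2 + h^2} - h \int_{(0,\infty)} \frac{\diff\sigma(s)}{(s-t)^2 + h^2}.
\end{equation*}
The $a/\lambda$ contribution integrates in $t$ over $(u+\delta, v+\delta)$ to $\frac{a}{\pi}[\arctan((v+\delta)/h) - \arctan((u+\delta)/h)]$, which vanishes as $h \searrow 0$ since $u + \delta > 0$, and can therefore be discarded.

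Next, I would apply Fubini's theorem to the remaining (nonnegative) double integral, exchanging the $t$- and $s$-integrations. The inner $t$-integral is precisely the Poisson integral of $\mathbf{1}_{(u+\delta, v+\delta)}$ evaluated at $s$, namely $\frac{1}{\pi}[\arctan\frac{v+\delta - s}{h} - \arctan\frac{u+\delta - s}{h}]$. Standard Poisson-kernel theory then gives that, as $h \searrow 0$, this quantity tends pointwise to $\mathbf{1}_{(u+\delta, v+\delta)}(s)$ away from the two endpoints, and to $1/2$ at them.

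To pass the $h$-limit under the $s$-integral, I would dominate the arctan bracket by $1$ on bounded sets and by $O(h/s^2)$ at infinity (via the mean value theorem applied to $\arctan$), using the local finiteness of $\sigma$ on bounded subsets of $(0,\infty)$---itself a consequence of the standing assumption $\int_{(0,\infty)} \frac{\diff\sigma(t)}{1+t} < \infty$---to justify dominated convergence. This gives that the inner limit equals $\sigma((u+\delta, v+\delta))$ up to the endpoint contribution $\tfrac12\sigma(\{u+\delta\}) + \tfrac12\sigma(\{v+\delta\})$. Finally, letting $\delta \searrow 0$ along values that avoid the (at most countable) atom set of $\sigma$ recovers $\sigma((u, v])$ by another application of dominated convergence; the asymmetry of $(u,v]$ is exactly produced by the rightward shift by $+\delta$, which places a possible atom at $v$ inside the interval while excluding a possible atom at $u$. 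The main technical obstacle is precisely this tower of limit interchanges, and the two-stage structure of the statement is tailored to accommodate it: the inner limit $h \searrow 0$ performs the Poisson inversion of the Stieltjes transform, while the outer limit $\delta \searrow 0$ disposes of potential atoms at the endpoints.
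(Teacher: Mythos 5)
Your proof is a genuinely different route from the paper's: the paper offers no direct argument, but instead cites \cite[Corollary 6.3]{SchillingSongVondracek12} together with the observation that $xg(x)$ is a complete Bernstein function whenever $g$ is Stieltjes. You instead carry out a self-contained Stieltjes--Perron inversion, substituting the representation \eqref{eq:Stieltjes} into the right-hand side of \eqref{eq:inversion formula}, taking imaginary parts, and recognising the inner $t$-integral as a Poisson integral of the indicator $\mathbf 1_{(u+\delta,v+\delta)}$. This has the merit of being entirely elementary and of making the role of the two nested limits transparent: the $h$-limit is the boundary-value inversion, the $\delta$-limit is there to handle atoms at the endpoints. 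The computation of $\im g(-t+ih)$, the disappearance of the $a/\lambda$ term, the Fubini interchange (legitimate by nonnegativity), the pointwise Poisson limit, and the domination $\min(1,C/s^2)$ (integrable against $\diff\sigma$ by local finiteness of $\sigma$ and the standing condition $\int(1+t)^{-1}\,\diff\sigma(t)<\infty$) are all correct.

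The one place the argument is slightly short of what the statement asserts is the final step. You pass to the limit $\delta\searrow 0$ only along $\delta$ avoiding the atoms of $\sigma$, which establishes the value of the double limit along a subsequence but not the existence of $\lim_{\delta\searrow 0}$ itself, as claimed in \eqref{eq:inversion formula}. This is easy to repair: after the inner $h$-limit the quantity equals
\[
\sigma\bigl((u+\delta,v+\delta)\bigr)+\tfrac12\sigma(\{u+\delta\})+\tfrac12\sigma(\{v+\delta\})
=\tfrac12\bigl[F(v+\delta)+F((v+\delta)^-)\bigr]-\tfrac12\bigl[F(u+\delta)+F((u+\delta)^-)\bigr],
\]
where $F(x)=\sigma((0,x])$ is the (right-continuous, nondecreasing) distribution function. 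Right-continuity of $F$, together with $F(x)\le F((x+\delta)^-)\le F(x+\delta)$ for $\delta>0$, gives $F((x+\delta)^-)\to F(x)$ as $\delta\searrow 0$, so the full $\delta$-limit exists and equals $F(v)-F(u)=\sigma((u,v])$. With that adjustment the proof is complete, and it makes the intuition at the end of your write-up rigorous.
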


We also record the following lemma which follows easily from Theorem \ref{thm:Stieltjes extensions}.

\begin{lemma}
\label{lemma:composition}
If $g$ is a Stieltjes function, then so is $g^\alpha$ for any $\alpha \in (0,1]$.
\end{lemma}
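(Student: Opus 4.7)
The plan is to verify the three conditions in the characterisation of Theorem~\ref{thm:Stieltjes extensions} for \(g^\alpha\): positivity on \((0,\infty)\), existence of a limit at \(0\) in \([0,\infty]\), and an analytic extension to \(\C\setminus(-\infty,0]\) satisfying \(\im z\cdot \im g^\alpha(z)\le 0\). The case \(g\equiv 0\) being trivial, I may assume \(g\) is not identically zero. By the remark following Theorem~\ref{thm:Stieltjes extensions}, \(g\) then maps \(\C_+\) strictly into \(\C_-\); by Schwarz reflection (\(g\) is real on \((0,\infty)\)) it also maps \(\C_-\) strictly into \(\C_+\). Combined with \(g>0\) on \((0,\infty)\), this yields \(g(z)\notin(-\infty,0]\) for every \(z\in \C\setminus(-\infty,0]\).

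With this, I would define
\[
g^\alpha(z):=\exp\bigl(\alpha \log g(z)\bigr),\qquad z\in \C\setminus(-\infty,0],
\]
using the principal branch of \(\log\), which produces a single-valued analytic extension to \(\C\setminus(-\infty,0]\) of the positive real function \(g^\alpha\) on \((0,\infty)\). The limit at the origin is immediate from the continuity of \(t\mapsto t^\alpha\) on \([0,\infty]\): if \(g(x)\to L\in[0,\infty]\) as \(x\searrow 0\), then \(g^\alpha(x)\to L^\alpha\in[0,\infty]\). For the sign condition, fix \(z\in \C_+\); then \(g(z)\in \C_-\) gives \(\arg g(z)\in(-\pi,0)\), so
\[
\arg g^\alpha(z)=\alpha \arg g(z)\in(-\alpha\pi,0)\subset(-\pi,0),
\]
since \(\alpha\in(0,1]\); hence \(\im g^\alpha(z)<0\). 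The case \(z\in \C_-\) is symmetric, and an appeal to Theorem~\ref{thm:Stieltjes extensions} finishes the proof.

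The main obstacle is justifying that \(g^\alpha\) is well-defined and analytic on the entire slit plane; this rests on the strict mapping property \(g(\C_\pm)\subset \C_\mp\), which prevents \(g\) from landing on the branch cut of \(\log\). The hypothesis \(\alpha\in(0,1]\) is used in exactly one place, to keep \(\alpha\arg g(z)\) inside the principal interval \((-\pi,\pi)\) so that multiplication of the argument by \(\alpha\) preserves the sign of its imaginary part; any \(\alpha>1\) would destroy this inclusion and with it the argument.
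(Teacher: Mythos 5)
Your proof is essentially correct and takes exactly the route the paper has in mind (it states that the lemma ``follows easily from Theorem~\ref{thm:Stieltjes extensions}'' and gives no proof): you verify the three conditions of that characterisation for $g^\alpha$ via the principal branch of the logarithm, using that $\alpha \in (0,1]$ keeps $\alpha\arg g(z)$ inside $(-\pi,0)$ (resp.\ $(0,\pi)$) when $\arg g(z)$ lies there.

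One small gap: you dispose only of $g\equiv 0$ as trivial, and then invoke the remark after Theorem~\ref{thm:Stieltjes extensions} to conclude that $g$ maps $\C_+$ strictly into $\C_-$. But that remark applies only to \emph{nonconstant} Stieltjes functions. A positive constant $g\equiv c$ is a Stieltjes function (indeed the remark says so explicitly) that does not map $\C_+$ into $\C_-$; it maps everything to $c\in(0,\infty)$. Your intermediate step ``$g$ then maps $\C_+$ strictly into $\C_-$'' is therefore false for such $g$, even though the conclusion ``$g(z)\notin(-\infty,0]$'' is still true and the lemma is trivial in that case since $g^\alpha\equiv c^\alpha$. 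The fix is simply to dispose of all constants (not just $g\equiv 0$) at the outset, and then appeal to the remark for the nonconstant case. Everything else — the choice of the principal branch, the limit at the origin, and the role of $\alpha\le 1$ in keeping the argument in the principal range — is fine.
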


Next, we are interested in characterising functions with a positive Fourier transform. We refer to \cite{MR2284176} for the following standard results.

\begin{definition}
A function \(f\colon \R^d \to \C\) is said to be \emph{positive definite} if for every \(n \in \N\), the \(n \times n\)-matrix with values  \(a_{ij} = f(\xi_i - \xi_j)\), \(1 \leq i,j \leq n\), is positive semi-definite. 
\end{definition}

\begin{theorem}[Schur]\label{thm:schur}
Let \(\{f_j\}_j\) be a countable family of positive definite functions. Then \(\prod_{j} f_j\) is positive definite.
\end{theorem}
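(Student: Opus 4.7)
The plan is to reduce Theorem~\ref{thm:schur} to the classical matrix Schur product theorem, which asserts that the Hadamard (entrywise) product $A\circ B = (a_{k\ell}b_{k\ell})$ of two positive semi-definite matrices $A=(a_{k\ell})$ and $B=(b_{k\ell})$ is again positive semi-definite. Granted this, the function-level statement follows in two steps. For a finite subfamily $f_1,\ldots,f_N$ and any $\xi_1,\ldots,\xi_n\in\R^d$, the matrix $\bigl(\prod_{j=1}^N f_j(\xi_k-\xi_\ell)\bigr)_{k,\ell=1}^n$ is precisely the Hadamard product of the $N$ positive semi-definite matrices $(f_j(\xi_k-\xi_\ell))_{k,\ell}$, so an induction on $N$ yields positive semi-definiteness. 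For a countable family, one then passes to the pointwise limit $N\to \infty$: since for each fixed $c\in\C^n$ the inequality $c^* A c\ge 0$ is closed, the cone of positive semi-definite matrices is stable under entrywise limits.

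The only non-routine point is therefore the matrix-level Hadamard theorem, and the cleanest route is via the spectral decomposition of $B$. Writing $B=\sum_{p=1}^n \mu_p w_p w_p^*$ with eigenvalues $\mu_p\ge 0$ and orthonormal eigenvectors $w_p\in\C^n$, a direct calculation shows that
\[
(w_p w_p^*)\circ A \;=\; D_p\, A\, D_p^*, \qquad \text{where}\quad D_p = \operatorname{diag}\bigl((w_p)_1,\ldots,(w_p)_n\bigr).
\]
Each $D_p A D_p^*$ is positive semi-definite by congruence with $A$, and therefore
\[
A\circ B \;=\; \sum_{p=1}^n \mu_p\, D_p A D_p^*
\]
is a nonnegative combination of positive semi-definite matrices, and so positive semi-definite itself.

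The main obstacle, insofar as there is one, is conceptual rather than technical: one must unpack what ``product of a countable family'' is to mean and verify that the pointwise product is well defined in the setting where Theorem~\ref{thm:schur} will be applied — in this paper typically through factors sufficiently close to $1$ that absolute convergence is immediate. Once this is in place, the combination of the Hadamard identity above, the induction on $N$, and the closedness of the positive semi-definite cone completes the argument in essentially one line.
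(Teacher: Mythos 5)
The paper does not actually prove this theorem; it records it as a classical result and refers the reader to the standard reference~\cite{MR2284176}. So there is no ``paper's own proof'' to compare against, and your task is really to supply a correct external proof, which you have done.

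Your argument is the standard one and is correct. The reduction to the matrix-level Schur (Hadamard) product theorem is exactly right: for a finite family, the matrix $\bigl(\prod_{j=1}^N f_j(\xi_k-\xi_\ell)\bigr)_{k,\ell}$ is the entrywise product of the individual Gram-type matrices, and induction on $N$ gives positive semi-definiteness. The proof of the matrix theorem via the spectral decomposition $B=\sum_p \mu_p w_p w_p^*$ and the congruence identity $(w_p w_p^*)\circ A = D_p A D_p^*$ is clean and correct; this is one of the two classical proofs (the other realising $A\circ B$ as a principal submatrix of the Kronecker product $A\otimes B$). Your handling of the countable case via closedness of the positive semi-definite cone under entrywise limits is also fine, and you are right to flag that the pointwise product must be well defined --- in the application here (infinite divisibility, roots $f^{1/n}$) the factors are eventually controlled, so this is not an issue. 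In short: correct proof, standard approach, filling in what the paper leaves to a reference.
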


\begin{theorem}[Bochner]\label{thm:bochner}
Any positive definite function continuous at zero is the Fourier transform of a finite Borel measure.
\end{theorem}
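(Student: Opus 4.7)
The plan is the classical Gaussian mollification argument. One direction is soft: if $f(\xi)=\int_{\R^d} e^{-i\xi\cdot x}\, d\mu(x)$ for a finite positive Borel measure $\mu$, then
\[
\sum_{i,j} c_i \bar c_j\, f(\xi_i - \xi_j) = \int \Bigl| \sum_i c_i e^{-i\xi_i\cdot x} \Bigr|^2 d\mu(x) \ge 0,
\]
and continuity at the origin (in fact everywhere) follows from dominated convergence. So the content lies entirely in the converse.

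Assuming $f$ to be positive definite and continuous at $0$, I would first harvest the standard elementary consequences of the hypothesis by testing positive definiteness against the $2\times 2$ and $3\times 3$ matrices indexed by $\{0,\xi\}$ and $\{0,\xi,\xi+h\}$: namely $f(0)\ge 0$, $\overline{f(\xi)}=f(-\xi)$, $|f(\xi)|\le f(0)$, and the quantitative bound
\[
|f(\xi+h)-f(\xi)|^2 \le 2f(0)\bigl(f(0)-\re f(h)\bigr),
\]
which upgrades continuity at the origin to uniform continuity on $\R^d$. In particular, $f$ is bounded and continuous on $\R^d$.

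Next I would mollify. For $\epsilon>0$ the Gaussian $g_\epsilon(\xi)=\exp(-\epsilon|\xi|^2)$ is, up to a constant, the Fourier transform of a Gaussian measure, hence positive definite by the easy direction; Schur's theorem (Theorem~\ref{thm:schur}) then makes $f_\epsilon:=f\cdot g_\epsilon$ positive definite as well, and clearly $f_\epsilon\in L^1(\R^d)\cap C(\R^d)$ with $f_\epsilon\to f$ pointwise as $\epsilon\searrow 0$. A Riemann-sum approximation converts the discrete positive definiteness of $f_\epsilon$ into the integral form
\[
\int\!\!\int f_\epsilon(\xi-\eta)\, \phi(\xi)\overline{\phi(\eta)}\, d\xi\, d\eta \ge 0, \qquad \phi\in\mathcal{S}(\R^d).
\]
The left-hand side equals $\int f_\epsilon\,\psi$, where $\psi(\tau)=\int \phi(\tau+\eta)\overline{\phi(\eta)}\,d\eta$ has Fourier transform $|\widehat\phi|^2$, so Plancherel converts the inequality into $\int \widehat{f_\epsilon}(x)\,|\widehat\phi(x)|^2\,dx \ge 0$. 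Since $|\widehat\phi|^2$ can be localised to an arbitrarily small neighbourhood of any prescribed point and $\widehat{f_\epsilon}$ is continuous, this forces $\widehat{f_\epsilon}\ge 0$ pointwise.

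Setting $d\mu_\epsilon(x) = (2\pi)^{-d}\widehat{f_\epsilon}(x)\, dx$, Fourier inversion together with the nonnegativity just obtained gives the uniform mass bound $\mu_\epsilon(\R^d)=f_\epsilon(0)=f(0)$, while tightness of $\{\mu_\epsilon\}$ follows, coordinate by coordinate, from the identity
\[
\int \Bigl(1-\tfrac{\sin(\delta x_k)}{\delta x_k}\Bigr)\, d\mu_\epsilon(x) = f_\epsilon(0) - \tfrac{1}{2\delta}\int_{-\delta}^{\delta} \re f_\epsilon(t e_k)\, dt,
\]
whose right-hand side tends to $0$ as $\delta\searrow 0$, uniformly in $\epsilon$, by the continuity of $f$ at the origin. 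Prokhorov's theorem then delivers a subsequential weak limit $\mu_{\epsilon_n}\rightharpoonup \mu$ for some finite positive Borel measure $\mu$, and the boundedness and continuity of $x\mapsto e^{\pm i\xi\cdot x}$ let one pass to the limit in $f_{\epsilon_n}(\xi)=\int e^{i\xi\cdot x}\,d\mu_{\epsilon_n}(x)$ and conclude that $f$ is the Fourier transform of $\mu$ (after the trivial reflection $\mu\mapsto \mu(-\,\cdot\,)$ to align sign conventions). The main obstacle is the pair of genuinely quantitative steps that actually exploit positive definiteness rather than soft analysis: the pointwise nonnegativity of $\widehat{f_\epsilon}$, and the tightness estimate preventing mass from escaping to infinity in the weak limit.
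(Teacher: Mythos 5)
Your argument is correct; it is the classical Gaussian-mollification proof of Bochner's theorem, with all the genuine quantitative points (uniform continuity from the $3\times 3$ determinant bound, passage from discrete to integral positive-definiteness and then to $\widehat{f_\epsilon}\ge 0$ via Plancherel, the $\sin(\delta x_k)/(\delta x_k)$ tightness estimate, and Prokhorov plus bounded-continuous test functions for the weak limit) correctly identified and correctly carried out. The paper itself does not prove this statement: it records Theorem~\ref{thm:bochner} as a standard result and refers to a monograph for the proof, so there is no internal argument to compare against. One small point worth tightening if you wanted to make your write-up fully self-contained: in the step where you deduce $\mu_\epsilon(\R^d)=f_\epsilon(0)$ by ``Fourier inversion,'' you are implicitly using that $\widehat{f_\epsilon}\in L^1$, which does not follow merely from $f_\epsilon\in L^1\cap C$; it is, however, a quick consequence of the nonnegativity of $\widehat{f_\epsilon}$ you have already established (monotone convergence applied to $\int \widehat{f_\epsilon}(x)\,e^{-\delta|x|^2}\,dx$ as $\delta\searrow 0$, combined with continuity of $f_\epsilon$ at the origin), so the gap is cosmetic rather than structural.
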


\begin{remark}
A common way of stating  Bochner's theorem is as a one-to-one correspondence between continuous positive definite functions and probability measures. This form of the statement is in agreement with Theorem~\ref{thm:bochner} as long as one requires \(f(0) = 1\) for the positive definite functions.
\end{remark}

Let \(f\) be a positive definite function. If \(f^r\) is positive definite for any real power \(r \geq 0\), then \(f\) is said to be \emph{infinitely divisible}.   By definition, any root \(f^{1/n}\) of an infinitely divisible function \(f\) is a positive definite function. It follows from Theorem~\ref{thm:schur} that any product of infinitely definite functions is again infinitely divisible. Moreover, \(f\) is infinitely divisible if and only if \(f^{1/n}\) is positive definite for any \(n \in \N\), since products and pointwise limits of positive definite functions are positive definite.
\medskip

We next recall Schoenberg's theorem which links completely monotone functions and positive definite functions.

\begin{theorem}\cite{MR1503439}\label{thm:characterisation}
A function \(g \colon [0,\infty)\to \R\) continuous at zero is completely monotone if and only if \(g(|\cdot|^2)\) is positive definite 
 on \(\R^d\) for all \(d \in \N\). 
\end{theorem}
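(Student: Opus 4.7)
My plan is to establish the two implications separately using Bernstein's theorem (Theorem~\ref{thm:HBW}) and Bochner's theorem (Theorem~\ref{thm:bochner}).

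\emph{Forward direction.} If $g$ is completely monotone, Bernstein's theorem yields $g(\lambda)=\int_{[0,\infty)}\exp(-\lambda s)\,\diff\mu(s)$, where $\mu$ is a finite Borel measure on $[0,\infty)$ of total mass $g(0)$ (finiteness follows from continuity of $g$ at zero and monotone convergence). For every $s\ge 0$, the Gaussian $\xi\mapsto\exp(-s|\xi|^2)$ is the Fourier transform of the probability density $x\mapsto(4\pi s)^{-d/2}\exp(-|x|^2/(4s))$ on $\R^d$ (with $s=0$ corresponding to $\delta_0$), and so positive definite on $\R^d$ by Bochner's theorem. Since
\[
g(|\xi|^2)=\int_{[0,\infty)}\exp(-s|\xi|^2)\,\diff\mu(s),
\]
linearity of the integral and finiteness of $\mu$ give $\sum_{i,j}c_i\bar c_j\,g(|\xi_i-\xi_j|^2)=\int_{[0,\infty)}\sum_{i,j}c_i\bar c_j\exp(-s|\xi_i-\xi_j|^2)\,\diff\mu(s)\ge 0$ for every finite collection $\{\xi_i\}\subset\R^d$, $\{c_i\}\subset\C$. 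So $g(|\cdot|^2)$ is positive definite on $\R^d$ in every dimension.

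\emph{Reverse direction: integral representation.} Conversely, assume $g(|\cdot|^2)$ is positive definite on $\R^d$ for every $d$. Bochner's theorem then supplies a finite positive Borel measure $\nu_d$ on $\R^d$ with $\hat\nu_d(\xi)=g(|\xi|^2)$, and uniqueness combined with the rotational invariance of $g(|\cdot|^2)$ forces $\nu_d$ to be rotation invariant. Decomposing $\nu_d=\int_0^\infty\sigma_{d,r}\,\diff\rho_d(r)$, with $\sigma_{d,r}$ the uniform probability measure on the radius-$r$ sphere in $\R^d$ and $\rho_d$ a finite measure on $[0,\infty)$ of total mass $g(0)$, and using that $\hat\sigma_{d,r}(\xi)$ depends only on $r|\xi|$, gives
\[
g(t)=\int_0^\infty\Omega_d\bigl(r\sqrt t\bigr)\,\diff\rho_d(r),\qquad \Omega_d(\tau):=\int_{S^{d-1}}\exp(-i\tau\omega_1)\,\diff\varsigma(\omega),
\]
where $\varsigma$ is the normalised surface measure on $S^{d-1}$. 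The decisive classical input is that if $\omega$ is uniformly distributed on $S^{d-1}$ then $\sqrt d\,\omega_1$ converges in distribution to a standard normal as $d\to\infty$, so by L\'evy's continuity theorem $\Omega_d(\tau\sqrt d)\to\exp(-\tau^2/2)$ uniformly on compact subsets of $\R$. Rescaling $r=s\sqrt d$ and letting $\tilde\rho_d$ denote the pushforward of $\rho_d$ under $r\mapsto r/\sqrt d$, the representation becomes
\[
g(t)=\int_0^\infty\Omega_d\bigl(s\sqrt{dt}\bigr)\,\diff\tilde\rho_d(s).
\]

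\emph{Passing to the limit.} It remains to extract a weakly convergent subsequence $\tilde\rho_{d_k}\rightharpoonup\tilde\rho$ and identify the limit. Tightness of $\{\tilde\rho_d\}$ follows from the identity
\[
g(0)-g(t)=\int_0^\infty\bigl(1-\Omega_d(s\sqrt{dt})\bigr)\,\diff\tilde\rho_d(s)
\]
together with the uniform-in-$d$ lower bound $1-\Omega_d(s\sqrt{dt})\gtrsim\min(s^2 t,1)$, obtained by expanding the cosine inside $\Omega_d$ and using that $\int_{S^{d-1}}\omega_1^2\,\diff\varsigma=1/d$; continuity of $g$ at $0$ then makes the left-hand side arbitrarily small, pinning the mass of $\tilde\rho_d$ on a bounded interval uniformly in $d$. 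Helly's selection theorem supplies a weak limit $\tilde\rho$, and since the integrands are bounded by $1$ and converge uniformly on compact $s$-sets to $\exp(-s^2 t/2)$, tightness plus dominated convergence yields
\[
g(t)=\int_0^\infty\exp(-s^2 t/2)\,\diff\tilde\rho(s)=\int_{[0,\infty)}\exp(-\lambda t)\,\diff\mu(\lambda),
\]
with $\mu$ the pushforward of $\tilde\rho$ under $s\mapsto s^2/2$. By the converse part of Bernstein's theorem, $g$ is completely monotone. The main technical obstacle is the uniform-in-$d$ control on $\Omega_d$ needed for both tightness and passage to the limit; everything else is standard.
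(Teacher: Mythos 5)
The paper states this as a cited result (Schoenberg, 1938) and gives no proof, so there is no in-paper argument to compare against; I therefore assess your proof on its own terms.

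The forward direction is correct and standard: Bernstein gives the mixing measure $\mu$, finiteness of $\mu$ follows from continuity of $g$ at $0$, Gaussians $\e^{-s|\cdot|^2}$ are positive definite in every dimension, and mixing over the nonnegative measure $\mu$ preserves positive definiteness. The reverse direction follows Schoenberg's classical route, which is the right approach: Bochner produces a rotation-invariant finite measure $\nu_d$, disintegration over spheres gives $g(t)=\int_0^\infty \Omega_d(r\sqrt t)\,\diff\rho_d(r)$, and the Maxwell--Poincar\'e limit $\Omega_d(\tau\sqrt d)\to\e^{-\tau^2/2}$ is the engine. The passage to the limit once tightness is in hand (split at a large radius, use locally uniform convergence of $\Omega_d$ to $\e^{-s^2t/2}$) is also fine.

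The gap is in the tightness step. You assert the uniform-in-$d$ bound $1-\Omega_d(s\sqrt{dt})\gtrsim\min(s^2 t,1)$ and say it follows from ``expanding the cosine'' and $\int_{S^{d-1}}\omega_1^2\,\diff\varsigma=1/d$. That Taylor argument, using $1-\cos x\ge \tfrac{x^2}{2}-\tfrac{x^4}{24}$ together with $\int\omega_1^4\,\diff\varsigma=3/(d(d+2))$, yields
\[
1-\Omega_d(\tau)\ \ge\ \frac{\tau^2}{2d}\Big(1-\frac{\tau^2}{4(d+2)}\Big),
\]
which is informative only for $\tau^2\lesssim d$, i.e.\ $s^2t\lesssim 1$ --- exactly the regime where $\min(s^2t,1)=s^2t$ and which does \emph{not} control the tail of $\tilde\rho_d$. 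For tightness you must rule out mass of $\tilde\rho_d$ escaping to $\infty$, which corresponds to the regime $s^2t\gtrsim 1$, i.e.\ $\tau\gtrsim\sqrt d$; there the quartic remainder overwhelms the quadratic term and the expansion gives nothing. You need a separate, uniform-in-$d$ argument that $\Omega_d(\tau)$ stays bounded away from $1$ once $\tau\gtrsim\sqrt d$ (recall $\Omega_d$ is a Bessel-type oscillating function, so pointwise bounds on $1-\cos$ are useless). One way to close the gap that avoids this issue altogether: under $\nu_d/g(0)$, the first coordinate $x_1=|x|\,\omega_1=s\sqrt d\,\omega_1$ has a \emph{$d$-independent} distribution (its characteristic function is $\xi\mapsto g(\xi^2)/g(0)$), $|x|$ and $\omega_1$ are independent by rotation invariance, and $\mathbb P(\sqrt d\,|\omega_1|\ge \tfrac12)$ is bounded below uniformly in $d\ge 2$; hence $\mathbb P_{\tilde\rho_d/g(0)}(s>2M)\lesssim \mathbb P(|x_1|>M)$, which is small for $M$ large uniformly in $d$. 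With this substitution the proof is complete.
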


We have the following two results, giving us properties of transforms of completely monotone and Stieltjes functions, respectively.

\begin{proposition}\label{prop:completely monotone Fourier}
Let \(f\colon \R \to \R\) and \(g\colon [0,\infty) \to \R\) be two functions satisfying \(f(\xi)=g(\xi^2)\).
Then \(f\) is  the Fourier transform of an even, integrable function such that
\((\F^{-1} f)(\sqrt{\cdot})\) is completely monotone if and only if \(g\) is completely monotone with \(\lim_{\lambda \searrow 0} g(\lambda)<\infty\) and \(\lim_{\lambda \to \infty} g(\lambda)=0\). In this case,  \(\F^{-1} f\) is smooth and monotone outside of the origin.
\end{proposition}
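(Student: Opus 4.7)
The plan rests on combining Bernstein's theorem (Theorem~\ref{thm:HBW}) with the fact that Gaussians are Fourier self-dual, specifically $\F^{-1}\!\bigl[\xi\mapsto e^{-s\xi^{2}}\bigr](x) = (4\pi s)^{-1/2}\, e^{-x^{2}/(4s)}$ for every $s>0$ under the paper's normalisation. Bernstein represents any admissible $g$ as a Laplace transform $g(\lambda) = \int_{[0,\infty)} e^{-\lambda s}\,\diff\mu(s)$, and the substitution $\lambda=\xi^{2}$ will translate this directly into a superposition-of-Gaussians representation for $K := \F^{-1}f$. This dictionary is the bridge between the two sides of the equivalence.

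For the necessity direction, given $g$ completely monotone with $g(0^{+})<\infty$ and $g(\infty)=0$, I would first invoke Bernstein to write $g$ as above; the two limit conditions make $\mu$ finite and force $\mu(\{0\})=0$ respectively. Defining $K$ as the corresponding superposition of Gaussians suggested by the identity above, one gets $\|K\|_{L^{1}}=\mu((0,\infty))=g(0^{+})<\infty$ by Tonelli (all integrands nonnegative), so $K$ is an even $L^{1}$ function. A Fubini interchange (now justified by $K\in L^{1}$) then verifies $\F K=f$, and the manifest representation $K(x)=h(x^{2})$ with $h(\lambda)=\int_{(0,\infty)} (4\pi s)^{-1/2}\, e^{-\lambda/(4s)}\,\diff\mu(s)$ exhibits $h$ as completely monotone by differentiation under the integral.

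For the sufficiency direction, assume $K:=\F^{-1}f\in L^{1}(\R)$ is even and $h(\lambda):=K(\sqrt{\lambda})$ is completely monotone. I would apply Bernstein to $h$ to obtain $h(\lambda)=\int_{[0,\infty)} e^{-\lambda t}\,\diff\nu(t)$, then rule out $\nu(\{0\})>0$ (which would make $K$ bounded below by a positive constant, contradicting $K\in L^{1}$). Fubini and the Gaussian identity applied in the opposite direction then give
\[
f(\xi) = (\F K)(\xi) = \int_{(0,\infty)} \sqrt{\pi/t}\, e^{-\xi^{2}/(4t)}\,\diff\nu(t) =: g(\xi^{2}),
\]
presenting $g$ as a superposition of exponentials, hence completely monotone. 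Tonelli identifies $g(0^{+})=\|K\|_{L^{1}}<\infty$ and dominated convergence yields $g(\infty)=0$. For the closing claim about smoothness and monotonicity, the same $h$ is smooth on $(0,\infty)$ with $h'<0$ unless it is identically constant, so $K(x)=h(x^{2})$ is smooth on $\R\setminus\{0\}$ and $K'(x)=2x h'(x^{2})$ has sign opposite to $x$.

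The main obstacle I anticipate is the measure-theoretic bookkeeping around the atom at $s=0$: the two limit conditions on $g$ correspond precisely to removing this atom and to finiteness of the total mass of $\mu$, and failing to track them carefully would introduce spurious non-integrable constants in $K$. Beyond that, the two Fubini applications require only nonnegativity together with the explicit $L^{1}$-identities that drop out of the computation, so no further analytic difficulty is expected; the genuine content of the proposition is the dictionary provided by Bernstein's theorem and the Gaussian Fourier identity.
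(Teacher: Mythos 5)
Your proposal is correct and follows essentially the same route as the paper: Bernstein's theorem to encode complete monotonicity as a Laplace-transform representation, the self-duality of the Gaussian under $\F$ to carry that representation through $\lambda = \xi^2$, and Fubini/Tonelli to pass the integral through $\F$. You track the two atom/finiteness conditions on the Bernstein measure in exactly the way the paper does for the necessity direction.

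There are two small organisational differences worth noting, neither of which affects correctness. First, the paper warms up by invoking Schoenberg's characterisation and Bochner's theorem to conclude \emph{a priori} that $f$ is the Fourier transform of a finite measure, and then upgrades that measure to an $L^1$ function via the Gaussian computation; you skip the positive-definiteness detour entirely and simply define $K$ as the Gaussian superposition, verify $\|K\|_{L^1}=\mu((0,\infty))<\infty$ by Tonelli, and check $\F K = f$ by Fubini. This is marginally more self-contained. Second, in the sufficiency direction the paper rules out the atom of the Bernstein measure of $K(\sqrt{\cdot})$ at $0$ by a change of variables showing $\diff\tilde\mu(s)/\sqrt{s}$ is finite, whereas you do so more directly by noting an atom at $0$ would force $K$ to be bounded below by a positive constant, contradicting $K \in L^1(\R)$; both arguments work and yours is the cleaner observation. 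One tiny point of care you should make explicit: the passage from $(-1)^n h^{(n)}\ge 0$ to \emph{strict} monotonicity of $K$ off the origin uses the standard fact (recorded after Theorem~\ref{thm:HBW} in the paper) that a completely monotone function has all derivatives strictly signed unless it is constant, together with the observation that a nonzero constant $h$ is incompatible with $K\in L^1(\R)$.
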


\begin{remark}
For a related result by Bochner (on subordinate Brownian motions), see \cite[Theorem 4.3.3]{MR0072370} and \cite[Example 13.16]{SchillingSongVondracek12}.
\end{remark}

\begin{proof}
When \(g\) is completely monotone and continuous at zero,  Bochner's theorem guarantees us that \(f(\xi)=g(\xi^2)\) is the Fourier transform of a measure.  
In fact, by Bernstein's theorem (cf.~Theorem~\ref{thm:HBW}), we have that
\[
f(\xi)=\mu(\{0\})+\int_{(0,\infty)} \exp(-\xi^2 t)\, \diff\mu(t)
\]
for some finite Borel measure $\mu$ on $[0, \infty)$. 
Since $\lim_{\xi\to \infty} f(\xi)=0$, we infer that $\mu(\{0\})=0$, whence
\begin{equation}\label{eq:Bochner_formula}
f(\xi)=\int_{(0,\infty)} \exp(-\xi^2 t)\, \diff\mu(t).
\end{equation}
Noting that
\[
\exp(-t \xi^2)=\F\left(\frac{\exp(-\frac{(\cdot)^2}{4 t })}{\sqrt{4\pi  t }}\right)(\xi)=
 \int_{\R} \frac{\exp(-\frac{x^2}{4t})}{\sqrt{4\pi t}}\exp(-i x\xi) \, \diff x, 
\]
it follows that
\begin{equation}
\label{eq:f Fubini}
\begin{aligned}
f(\xi)&= \int_{(0,\infty)} \left( \int_{\R} \frac{\exp(-\frac{x^2}{4t})}{\sqrt{4\pi t }} \exp(-i x\xi)\, \diff x \right) \, \diff\mu(t)\\
&= \int_{\R} \left(\int_{(0,\infty)}  \frac{\exp(-\frac{x^2}{4 t })}{\sqrt{4\pi t }} \, \diff\mu( t )\right) \exp(-i x\xi)\, \diff x,
\end{aligned}
\end{equation}
where we have used Fubini's theorem; in order to verify that it applies one can consider the change of variables \(y=x/\sqrt{t}\), recalling that \(\mu\) is finite. Hence,
\begin{equation}
\label{eq:inverse Fourier Bernstein I}
(\F^{-1} f)(x) =  \int_{(0,\infty)} \frac{\exp(-\frac{x^2}{4 t })}{\sqrt{4\pi t }} \, \diff\mu(t).
\end{equation}
This calculation together with Bochner's theorem implies that \( \F f \in L^1(\R)\cap C^\infty(\R\setminus\{0\})\), so that \(\F f\) is actually given by a function and not just a measure (in this and the following proof it does not matter if we consider \(\F f\) or \(\F^{-1}f\), since they differ only by a constant factor). The positivity of \(\F f\) is clear from the above formula, too, and we in addition see that 
\( \F f\) is monotone for $x>0$. More precisely, 
\[
(\F^{-1} f)^\prime(x)=-\frac{1}{4\sqrt{\pi}} \int_{(0,\infty)} \frac{x\exp(-\frac{x^2}{4t})}{t^{3/2}}  \,\diff\mu(t) <0.
\]
Finally, the fact that \((\F^{-1} f)(\sqrt{\cdot})\) is completely monotone is a consequence of Bernstein's theorem and the computation
\begin{equation}
\label{eq:inverse Fourier Bernstein II}
(\F^{-1} f)(\sqrt{\lambda}) =  \int_{(0,\infty)} \frac{\exp(-\frac{\lambda}{4 t })}{\sqrt{4\pi t }} \, \diff\mu(t)=
\int_{(0,\infty)} \exp(-s\lambda)\, \diff\tilde \mu(s),
\end{equation}
in which the measure \(\tilde \mu\) is given by
\begin{equation}
\label{eq:tilde mu definition}
\diff \tilde \mu=\sqrt{\frac{\cdot}{\pi}} \diff (\psi_*(\mu)),
\end{equation}
where \(\psi(t)=\frac{1}{4t}\) and \(\psi_*(\mu)\) is the pushforward of \(\mu\) by \(\psi\).

Conversely, suppose that \(f\) is the Fourier transform of an even, integrable function \(\F^{-1} f\) and that
\((\F^{-1} f)(\sqrt{\cdot})\) is completely monotone. Then we can write \((\F^{-1} f)(\sqrt{\cdot})\) in the form
\[
(\F^{-1} f)(\sqrt{\lambda}) =
\int_{[0,\infty)} \exp(-s\lambda)\, \diff\tilde \mu(s),
\]
where \(\tilde \mu\) is obtained using Bernstein's theorem. Consequently,
\[
(\F^{-1} f)(x) =
\int_{[0,\infty)} \exp(-sx^2)\, \diff\tilde \mu(s),
\]
and integrating this relation using the change of variables \(y=\sqrt{s}x\) yields that \(\diff \tilde \mu(s)/\sqrt{s}\) is a finite measure. In particular, \(\tilde \mu\) has no mass at \(0\), so that \eqref{eq:inverse Fourier Bernstein II} holds, 
where \(\mu\) is the finite measure defined by \eqref{eq:tilde mu definition}. Consequently, we have \eqref{eq:inverse Fourier Bernstein I} and the calculation \eqref{eq:f Fubini} is now justified by Fubini's theorem.
Thus, \eqref{eq:Bochner_formula} holds and this in turn implies that \(g\) is completely monotone with 
\(\lim_{\lambda \searrow 0} g(\lambda)=0\) and \(\lim_{\lambda \to \infty} g(\lambda)=0\).
\end{proof}

When the function \(g\) is Stieltjes one can sharpen these conclusions. It is already clear from Theorem~\ref{thm:characterisation} and Lemma~\ref{lemma:composition} that $g(\xi^2)$ is infinitely divisible. In addition, we have:

\begin{proposition}
\label{prop:Stieltjes Fourier}
Let \(f\) and \(g\) be two functions satisfying \(f(\xi)=g(\xi^2)\). Then \(f\) is the Fourier transform of an even, integrable and completely monotone  function if and only if \(g\) is Stieltjes with  \(\lim_{\lambda \searrow 0} g(\lambda)<\infty\) and \linebreak \(\lim_{\lambda \to \infty} g(\lambda)=0\).
One has
\begin{equation}
\label{eq:g representation}
g(\lambda)=\int_{(0,\infty)} \frac{1}{\lambda+t}\, \diff\sigma(t), 
\end{equation}
and, with \(\psi = \sqrt{\cdot}\), the pushforward of \(\sigma\) by \(\psi\) relates \(f\) to \(g\) via
\begin{equation}\label{eq:f representation}
(\F^{-1} f)(x)=\int_{(0,\infty)} \exp(-s|x|) \, \diff\mu(s), \qquad \diff \mu=\frac{1}{2(\cdot)}\diff( \psi_*(\sigma)).
\end{equation}
\end{proposition}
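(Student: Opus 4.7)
My plan is to establish the equivalence by combining the Stieltjes representation from Definition~\ref{def:Stieltjes function} with Bernstein's Theorem~\ref{thm:HBW} and the elementary resolvent identity
\[
\F^{-1}\!\left(\frac{1}{\xi^{2}+t}\right)\!(x)=\frac{\exp(-\sqrt{t}\,|x|)}{2\sqrt{t}},\qquad t>0,
\]
which serves as the bridge between the $1/(\lambda+t)$-kernel on the Stieltjes side and the exponential $\exp(-s|x|)$-kernel on the Fourier side. The change of variable $t=s^{2}$ will then glue the measures $\sigma$ and $\mu$ together.

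For the ``if'' direction I start from \eqref{eq:Stieltjes}. The hypothesis $\lim_{\lambda\searrow 0}g(\lambda)<\infty$ forces $a=0$ and, via Fatou's lemma, also gives $\int\diff\sigma(t)/t<\infty$, while $\lim_{\lambda\to\infty}g(\lambda)=0$ forces $b=0$, leaving the representation \eqref{eq:g representation}. Substituting this into $f(\xi)=g(\xi^{2})$, inverting termwise using the resolvent identity, and applying Fubini's theorem --- whose hypothesis is precisely the bound
\[
\int_{\R}\int_{(0,\infty)}\frac{\exp(-\sqrt{t}\,|x|)}{2\sqrt{t}}\,\diff\sigma(t)\,\diff x=\int_{(0,\infty)}\frac{\diff\sigma(t)}{t}<\infty
\]
--- yields $(\F^{-1}f)(x)=\int_{(0,\infty)}\exp(-\sqrt{t}\,|x|)/(2\sqrt{t})\,\diff\sigma(t)$. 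Pushing $\sigma$ forward by $\psi$ and setting $\diff\mu=(2s)^{-1}\diff(\psi_{*}\sigma)$ rewrites this as \eqref{eq:f representation}. Valid for $x>0$ and extended to $x<0$ by evenness, this displays $\F^{-1}f$ on $(0,\infty)$ as the Laplace transform of the positive Borel measure $\mu$; complete monotonicity then follows from Theorem~\ref{thm:HBW}, and integrability on $\R$ has already been checked along the way.

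For the converse I take $\F^{-1}f$ as given --- even, integrable and completely monotone on $(0,\infty)$ --- and invoke Bernstein's theorem to produce a positive Borel measure $\mu$ with $(\F^{-1}f)(x)=\int_{[0,\infty)}\exp(-sx)\,\diff\mu(s)$ for $x>0$. Integrability of $\F^{-1}f$ on $\R$ forces $\mu(\{0\})=0$ and $\int_{(0,\infty)}\diff\mu(s)/s<\infty$. Computing $f=\F(\F^{-1}f)$ via Fubini and the identity $\F(\exp(-s|\cdot|))(\xi)=2s/(\xi^{2}+s^{2})$ gives
\[
f(\xi)=\int_{(0,\infty)}\frac{2s}{\xi^{2}+s^{2}}\,\diff\mu(s),
\]
and pushing the measure $2s\,\diff\mu(s)$ forward under $s\mapsto s^{2}$ recovers a $\sigma$ satisfying \eqref{eq:g representation}, whose admissibility condition $\int\diff\sigma(t)/(1+t)<\infty$ follows from $\int\diff\mu(s)/s<\infty$ by splitting the $s$-integral at $s=1$. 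The two limit conditions on $g$ transfer from the Riemann--Lebesgue lemma and the finiteness of $\int_{\R}\F^{-1}f\,\diff x$, respectively. I expect the only technical point needing real attention to be the twin applications of Fubini and the consistency of the pushforward bookkeeping; the conceptual content is essentially contained in the resolvent identity combined with Bernstein's theorem, and the two constructions $\sigma\mapsto\mu$ and $\mu\mapsto\sigma$ are formally inverse to one another under $t=s^{2}$.
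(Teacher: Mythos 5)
Your proof is correct and follows essentially the same route as the paper's: the resolvent identity $\F^{-1}(1/(\xi^2+t))(x)=\exp(-\sqrt{t}|x|)/(2\sqrt{t})$, Fubini, Bernstein's theorem, and the pushforward change of variable $t=s^2$ gluing $\sigma$ to $\mu$. One minor simplification: in the converse, rather than splitting the $s$-integral at $s=1$ to check $\int\diff\sigma(t)/(1+t)<\infty$, note that $\int_{(0,\infty)}\diff\sigma(t)/t=2\int_{(0,\infty)}\diff\mu(s)/s<\infty$ directly, which simultaneously gives the admissibility of $\sigma$ and (by monotone/dominated convergence in \eqref{eq:g representation}) both limit conditions on $g$, making the appeal to Riemann--Lebesgue unnecessary.
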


\begin{remark}
The main part of this result is known in the theory of subordinate Brownian motions; see \cite[Proposition 2.14]{MR2860308} and \cite[Example 13.16]{SchillingSongVondracek12}. We include a proof for completeness since our result is slightly different as well as phrased in a different language.
\end{remark}

\begin{proof}
Suppose first that \(g\) is a Stieltjes function with \(\lim_{\lambda \searrow 0} g(\lambda)<\infty\) and \(\lim_{\lambda \to \infty} g(\lambda)=0\).
By assumption, 
\(g\) is given by \eqref{eq:g representation}
with \(\int_{(0,\infty)}\frac{\diff\sigma(t)}{t}<\infty\) (see the remark after Definition \ref{def:Stieltjes function}) and hence we obtain from \(f(\xi)=g(\xi^2)\) that
\[
f(\xi) =\int_{(0,\infty)} \frac{1}{\xi^2+t}\, \diff\sigma(t).
\]
From the transform
\begin{equation}\label{eq:exponential transform} 
\F^{-1} \left( \frac{s}{\xi^2+s^2} \right)(x) =\frac12 \exp(-s\, |x|),
\end{equation}
with \(s = \sqrt{t}\), and an argument as in the proof of Proposition \ref{prop:completely monotone Fourier}, we get that
\begin{equation}\label{eq:fhat_Stieltjes}
(\F^{-1} f)(x)= \int_{(0,\infty)} \frac{\exp(-\sqrt{t}\, |x|)}{2\sqrt{t}} \, \diff\sigma(t).
\end{equation}
Here, one makes the changes of variables
\(y = \sqrt{t} x\)
in order to justify the use of Fubini's theorem. 
Making the change of variables \(t \mapsto \sqrt{t} = \psi(t)\) in the integral, we obtain \eqref{eq:f representation} in the variable \(s = \sqrt{t}\). In particular,  \(\F f\) is completely monotone by Bernstein's theorem (\(\diff \mu(s)/s\) is finite). The evenness of  \(\F f\) follows immediately from the evenness of \(g(\xi^2)\), and the fact that  \(\F f \in L^1(\R)\) is a consequence of Proposition \ref{prop:completely monotone Fourier}.

Conversely, suppose that \(\F f\in L^1(\R)\) is even and completely monotone.
Then
\[
(\F^{-1} f)(x)=\int_{[0, \infty)} \exp(-|x|s)\,\diff\mu(s),
\]
for some Borel measure \(\mu\) on \([0,\infty)\) and the integrability of  \(\F f\) implies that \(\diff \mu(s)/s\) is a finite measure. In particular, \(\mu\) has no mass at \(0\) so that the left-most equality in \eqref{eq:f representation} holds.
Thus, we have \eqref{eq:fhat_Stieltjes}  with   \(\sigma\) defined by \eqref{eq:f representation}, and
\eqref{eq:exponential transform} together with Fubini's theorem yield \eqref{eq:g representation}.
Moreover, it is easily seen that \(\diff \sigma(t)/t\) is finite so that \(g\) is a Stieltjes function with the desired properties.
\end{proof}

\subsection*{Positivity and monotonicity properties: the Whitham kernel}

Note that we can write the Whitham symbol as $m(\xi)=g(\xi^2)$, where
\begin{equation}\label{eq:gm}
g(\lambda)=\sqrt{\frac{\tanh \sqrt{\lambda}}{\sqrt{\lambda}}}, \qquad \lambda\ge 0.
\end{equation}

\begin{proposition}\label{prop:g is Stieltjes}
$(g(\lambda))^{2\alpha}$ is a Stieltjes function for any $\alpha \in (0, 1]$.
\end{proposition}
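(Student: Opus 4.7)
The plan is to reduce the statement to the case \(\alpha = 1\): once it is shown that \(g^2\) itself is a Stieltjes function, Lemma~\ref{lemma:composition} applied to \(g^2\) immediately yields that \((g^2)^\alpha = g^{2\alpha}\) is Stieltjes for every \(\alpha \in (0,1]\). So the task is to exhibit a Stieltjes representation of the form \eqref{eq:Stieltjes} for \(g^2(\lambda) = \tanh(\sqrt{\lambda})/\sqrt{\lambda}\).

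To achieve this, I would invoke the classical Mittag--Leffler partial-fraction expansion
\[
\tanh(w) = \sum_{n=0}^\infty \frac{2w}{w^2 + (n+\tfrac12)^2 \pi^2},
\]
which converges locally uniformly in \(\C\) away from the poles \(w = \pm i(n+\tfrac12)\pi\) of \(\tanh\) and follows from summing the simple residues at these poles. Setting \(w = \sqrt{\lambda}\) for \(\lambda > 0\), dividing by \(\sqrt{\lambda}\), and writing \(\lambda_n := (n+\tfrac12)^2\pi^2\), this gives
\[
g^2(\lambda) = \sum_{n=0}^\infty \frac{2}{\lambda + \lambda_n} = \int_{(0,\infty)} \frac{\diff\sigma(t)}{\lambda + t}, \qquad \sigma := 2 \sum_{n=0}^\infty \delta_{\lambda_n}.
\]
Since \(\lambda_n \sim n^2\pi^2\), the sum \(\sum_n (1+\lambda_n)^{-1}\) converges, so \(\sigma\) satisfies \(\int_{(0,\infty)} (1+t)^{-1}\,\diff\sigma(t) < \infty\), and the displayed formula is precisely the Stieltjes representation \eqref{eq:Stieltjes} with \(a = b = 0\). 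Combined with Lemma~\ref{lemma:composition}, this completes the proof.

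The only technical input is the Mittag--Leffler decomposition of \(\tanh\), and no real obstacle is expected: the substitution \(w = \sqrt{\lambda}\) is term-wise legitimate for \(\lambda > 0\), and the resulting series converges uniformly on compact subsets of \((0,\infty)\) by comparison with \(\sum_n \lambda_n^{-1}\). As an alternative route one could try to apply Theorem~\ref{thm:Stieltjes extensions} directly to \(g^{2\alpha}\), analytically continuing \(\tanh(\sqrt{z})/\sqrt{z}\) to \(\C \setminus (-\infty, 0]\) via the principal branch of the square root and checking \(\im z \cdot \im g^{2\alpha}(z) \le 0\); this would avoid the composition lemma but requires controlling fractional powers of a meromorphic function with infinitely many zeros and poles on the negative real axis, and seems less transparent than the explicit series representation above.
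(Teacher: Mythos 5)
Your argument is correct, and it takes a genuinely different route from the paper's. Both you and the paper reduce to showing that $g^2(\lambda) = \tanh(\sqrt{\lambda})/\sqrt{\lambda}$ is Stieltjes and then invoke Lemma~\ref{lemma:composition}; the difference lies entirely in how $g^2$ is handled. The paper works with the reciprocal $\sqrt{\lambda}/\tanh\sqrt{\lambda}$, verifies by a direct computation that $z/\tanh z$ maps the open first quadrant into $\C_+$, deduces that $g^2$ maps $\C_+$ into $\C_-$, and appeals to the analytic characterisation in Theorem~\ref{thm:Stieltjes extensions}. You instead produce the Stieltjes representation of $g^2$ explicitly from the Mittag--Leffler expansion of $\tanh$, obtaining the discrete measure $\sigma = 2\sum_{n\ge 0}\delta_{(n+1/2)^2\pi^2}$ with $a=b=0$. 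Your route is more elementary and has the advantage of being fully constructive: it hands you the representing measure for $g^2$ immediately, without passing through the inversion formula \eqref{eq:inversion formula}. The paper's route is shorter to state and scales naturally to the fractional powers, but it yields the representation only abstractly; the paper then separately applies \eqref{eq:inversion formula} in Proposition~\ref{prop:Whitham formula} to make the measure for $g$ (not $g^2$) explicit. It is worth noting, as your alternative remark already hints, that trying to apply Theorem~\ref{thm:Stieltjes extensions} directly to $g^{2\alpha}$ with $\alpha<1$ would indeed be awkward; both you and the paper sidestep this entirely by proving the $\alpha=1$ case and then invoking the stability of the Stieltjes class under fractional powers.
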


\begin{proof}
To see this, 
note that the reciprocal
\[
\lambda \to  \frac{\sqrt{\lambda}}{\tanh \sqrt{\lambda}}
\]
is positive on $(0, \infty)$ with the finite limit $1$ as $\lambda \searrow 0$, and extends to an analytic function on $\C\setminus (-\infty, 0]$ 
if we let $\sqrt{\lambda}$ denote the principal branch of the square root.
It also maps $\C_+$ to $\C_+$. Indeed, a straightforward calculation shows that
\begin{align*}
\im \left(\frac{z}{\tanh z}\right)&=\frac{2}{|\exp(z)-\exp(-z)|^2} (\im z \sinh(2\re z)-\re z \sin(2\im z))\\
&> 
\frac{4}{|\exp(z)-\exp(-z)|^2} (\im z \re z-\re z \im z)=0
\end{align*}
when $\re z, \im z>0$, from which it follows that \(
\im(\sqrt{\lambda}/\tanh \sqrt{\lambda}))> 0\)
when $\im \lambda >0$. This implies that \(\lambda \mapsto \tanh \sqrt{\lambda}/\sqrt{\lambda}\)
satisfies the conditions of  Theorem \ref{thm:Stieltjes extensions}. 
Hence, $(g(\lambda))^{2\alpha}$ is a Stieltjes function by Lemma \ref{lemma:composition}.
\end{proof}

Throughout the rest of Section~\ref{sec:K} we let $\alpha=1/2$. The following is our main result concerning the kernel \(K\), and will be used repeatedly in the later sections.

\begin{proposition}\label{prop:Whitham formula}
The Whitham kernel can be expressed in the form \eqref{eq:f representation}. The Borel measure $\sigma$ in the same formula satisfies $\int_{(0,\infty)}\frac{\diff\sigma(t)}{t}<\infty$, and is absolutely continuous with density
\[
\frac{1}{\pi} \sum_{n=1}^\infty  \sqrt{\frac{|\tan \sqrt{t}|}{\sqrt{t}}}\chi_{((\frac{(2n-1)\pi}{2})^2, (n\pi)^2)}(t).
\]
Thus
\begin{equation}
\label{eq:explicit expression for Whitham kernel}
K(x)=\frac1{\pi} \sum_{n=1}^\infty\int_{\frac{(2n-1)\pi}{2}}^{n\pi} \exp(-s |x|)\sqrt{\frac{|\tan s|}{s}}\, \diff s,
\end{equation}
and \(K\) is completely monotone on $(0,\infty)$. In particular,  it is positive, strictly decreasing and strictly convex for $x>0$.
\end{proposition}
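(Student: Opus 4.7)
The plan is to combine Proposition~\ref{prop:g is Stieltjes} (with $\alpha=1/2$) with Proposition~\ref{prop:Stieltjes Fourier} applied to $f=m$ and $g$ as in \eqref{eq:gm}. Since $g$ is then Stieltjes with $\lim_{\lambda \searrow 0} g(\lambda) = 1 < \infty$ and $\lim_{\lambda \to \infty} g(\lambda) = 0$, the representation \eqref{eq:f representation} for $K$ follows at once, and the remark after Definition~\ref{def:Stieltjes function} yields $\int_{(0,\infty)} \frac{\diff\sigma(t)}{t} < \infty$. Complete monotonicity of $K$ on $(0,\infty)$ is then immediate from Bernstein's theorem applied to the Laplace-transform representation \eqref{eq:f representation}:
\[
(-1)^n K^{(n)}(x) = \int_{(0,\infty)} s^n \exp(-sx)\, \diff\mu(s) \geq 0,
\]
for every $n \geq 0$ and every $x > 0$. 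Once the density of $\mu$ is shown below to be strictly positive on intervals of positive length, each such inequality becomes strict, giving in particular positivity, strict monotonicity, and strict convexity of $K$ on $(0,\infty)$.

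To identify $\sigma$ explicitly I would apply the inversion formula \eqref{eq:inversion formula} to the analytic extension of $g$ to $\C\setminus(-\infty, 0]$ furnished by the principal branch of the square root (whose existence is built into the proof of Proposition~\ref{prop:g is Stieltjes} via Theorem~\ref{thm:Stieltjes extensions}). Fix $t>0$ and set $\lambda=-t+ih$; as $h\to 0^+$ one has $\sqrt{\lambda}\to i\sqrt{t}$ and, using $\tanh(iy)/(iy)=\tan(y)/y$,
\[
\frac{\tanh\sqrt{\lambda}}{\sqrt{\lambda}} \to \frac{\tan\sqrt{t}}{\sqrt{t}},
\]
a real quantity whose sign alternates: positive for $\sqrt{t}\in(n\pi,(2n+1)\pi/2)$, $n\geq 0$, and negative for $\sqrt{t}\in((2n-1)\pi/2,n\pi)$, $n\geq 1$. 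On the first family of intervals $g$ continues real-analytically across the (only apparent) cut, so $\im g(-t+i0^+)=0$ and $\sigma$ puts no mass there. On the second family the radicand tends to a negative real number, and since $g$ is a \emph{nonconstant} Stieltjes function it maps $\C_+$ into $\C_-$ (cf.\ Theorem~\ref{thm:Stieltjes extensions} and the subsequent remark), which pins down the boundary value $g(-t+i0^+)=-i\sqrt{|\tan\sqrt{t}|/\sqrt{t}}$. Substituting into \eqref{eq:inversion formula} produces precisely the density for $\sigma$ claimed in the proposition.

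Finally, the change of variables $s=\sqrt{t}$ through the pushforward relation $\diff\mu=\frac{1}{2(\cdot)}\diff(\psi_*\sigma)$ in \eqref{eq:f representation} converts the density of $\sigma$ into $\frac{1}{\pi}\sqrt{|\tan s|/s}$ on each interval $((2n-1)\pi/2,n\pi)$, and insertion into $K(x)=\int_{(0,\infty)}\exp(-s|x|)\,\diff\mu(s)$ then yields the explicit series \eqref{eq:explicit expression for Whitham kernel}. Strict positivity of $|\tan s|/s$ on each such open interval makes $\mu$ nontrivial, so the inequalities in the complete-monotonicity chain are all strict, delivering positivity, strict monotonicity, and strict convexity of $K$. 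I expect the one real obstacle in this scheme to be the sign determination of $\im g(-t+i0^+)$ on the support of $\sigma$: either via a first-order expansion of $g(-t+ih)$ in $h$ that carefully tracks the principal branch through the third-quadrant detour near the cut, or, more cleanly, by invoking the Nevanlinna-type characterisation in Theorem~\ref{thm:Stieltjes extensions} to enforce $\im g \leq 0$ throughout $\C_+$.
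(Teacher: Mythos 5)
Your proposal follows the paper's proof closely: apply Proposition~\ref{prop:g is Stieltjes} and Proposition~\ref{prop:Stieltjes Fourier}, then use the inversion formula \eqref{eq:inversion formula} to compute $\sigma$, and finally change variables $s=\sqrt{t}$ to obtain \eqref{eq:explicit expression for Whitham kernel}. The one place where you supply more detail than the paper is the sign determination of $\im g(-t+i0^+)$; your observation that the Nevanlinna property $\im g\le 0$ on $\C_+$ (from Theorem~\ref{thm:Stieltjes extensions}) pins down the branch of the square root as $-i\sqrt{|\tan\sqrt{t}|/\sqrt{t}}$ is exactly the right way to make the paper's brief claim rigorous, and the rest of the computation (densities transforming correctly under the pushforward by $\psi=\sqrt{\cdot}$) checks out.
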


\begin{proof}
For \(g\) as in \eqref{eq:gm} we have $\lim_{\lambda\to \infty} g(\lambda)=0$ and  $\lim_{\lambda\to 0} g(\lambda)=1$. 
Applying Proposition \ref{prop:Stieltjes Fourier} we immediately obtain the first part of the proposition. The inversion formula \eqref{eq:inversion formula} furthermore gives us
\begin{align*}
\sigma(u,v]&=-\lim_{h\searrow 0} \frac{1}{\pi} \int_{(u,v]} \im 
\sqrt{\frac{\tanh\sqrt{-t+ih}}{\sqrt{-t+ih}}}
\, \diff t\\
&=
-\lim_{h\searrow 0} \frac{1}{\pi} \int_{(u,v]} \im 
\sqrt{\frac{\tan\sqrt{t-ih}}{\sqrt{t-ih}}}
\, \diff t.
\end{align*}
We get a contribution from each interval on which $\tan\sqrt{t}/\sqrt{t}$ is negative, i.e., from each interval
\[
((\pi/2)^2, \pi^2),\quad ((3\pi/2)^2, (2\pi)^2),\quad \ldots\quad,\quad (((2n-1)\pi/2)^2, (n\pi)^2),\quad \ldots,
\]
giving the announced expression for $\sigma$. The formula \eqref{eq:explicit expression for Whitham kernel}   for $K$ then follows by  substituting this expression for \(\sigma\) in
\eqref{eq:f representation} and making the change of variables $s=\sqrt{t}$ in order to determine \(\mu\).
\end{proof}

\begin{remark}
We remark that \eqref{eq:explicit expression for Whitham kernel} could also be obtained by deforming the contour in the calculation of the Fourier transform of $m(\xi)$ further. Assume that $x>0$
and recall from \eqref{eq:first deformation} that
\begin{align*}
\int_\R \exp(ix\xi) m(\xi)\, \diff\xi&= 
\int_\R \exp(ix(\xi+is_0))m(\xi+is_0)\, \diff\xi\\
&=\exp(-xs_0) \int_\R \exp(ix\xi)m(\xi+is_0)\, \diff\xi,
\end{align*}
for any $s_0\in (0, \pi/2)$.
We can extend the contour by replacing $s_0$ with a number  $s_1\in (\pi, 3\pi/2)$, obtaining
\begin{equation}
\label{eq:second deformation}
\begin{aligned}
&\int_\R \exp(ix(\xi+is_0))m(\xi+is_0)\, \diff\xi\\
&= 
\int_\R \exp(ix(\xi+is_1))m(\xi+is_1)\, \diff\xi\\
&\qquad
+i\lim_{{s}\searrow 0} \int_{\frac{\pi}{2}}^{\pi} \left(\exp(ix({s}+is)) m({s}+is) - \exp(ix(-{s}+is)) m(-{s}+is)\right) \, \diff s\\
&=
\int_\R \exp(ix(\xi+is_1))m(\xi+is_1)\, \diff\xi
+2\int_{\frac{\pi}{2}}^{\pi} \exp(-xs)
\sqrt{\frac{|\tan s|}{s}}\, \diff s;
\end{aligned}
\end{equation}
see Figure \ref{fig:contour}.
Repeating this procedure, we may replace $s_1$ by a number $s_N \in (N\pi, (2N+1)\pi/2)$ and obtain
\begin{align*}
\int_\R \exp(ix\xi) m(\xi)\, \diff\xi&= 
\int_\R \exp(ix(\xi+is_N))m(\xi+is_N)\, \diff\xi\\
&\quad
+2 \sum_{n=1}^N\int_{\frac{(2n-1)\pi}{2}}^{n\pi} \exp(-s x) \sqrt{\frac{|\tan s|}{s}}\, \diff s,
\end{align*}
for $N=1,2,3,\ldots$. Taking $s_N=\pi/4+N\pi$, and noting that 
\(|\partial_\xi m(\xi+is_N)|\lesssim (|\xi|+1)^{-3/2}\) uniformly in $N$, we find that
\[
\int_\R \exp(ix(\xi+is_N))m(\xi+is_N)\, \diff\xi=\exp(-xs_N)\int_\R \exp(ix\xi)m(\xi+is_N)\, \diff\xi\to 0
\]
as $N\to \infty$  (the convergence is uniform for $x\ge x_0>0$). It follows that
\[
\frac{1}{2\pi}\int_\R \exp(ix\xi) m(\xi)\, \diff\xi= 
\frac{1}{\pi} \sum_{n=1}^{\infty}\int_{\frac{(2n-1)\pi}{2}}^{n\pi} \exp(-s x) \sqrt{\frac{|\tan s|}{s}}\, \diff s,\quad x>0.
\]
\end{remark}

\begin{figure}
\begin{center}
\includegraphics[width=0.7\linewidth]{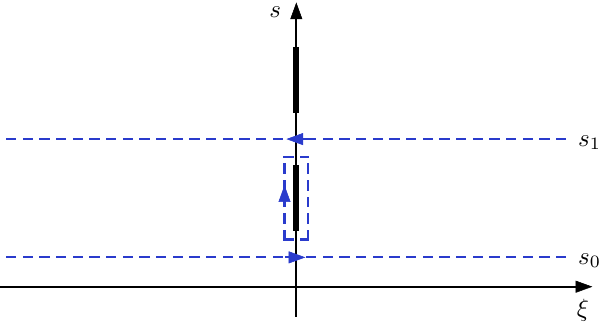}
\caption{By Cauchy's theorem and \eqref{eq:deformation estimate}, the integral of $e^{ix\zeta} m(\zeta)$ along the dashed contour vanishes. The filled intervals on the positive $s$-axis are the branch cuts $[\pi/2, \pi], [3\pi/2, 2\pi], \ldots$. Equation \eqref{eq:second deformation} is obtained by letting the inner contour converge to the branch cut which it surrounds.}
\label{fig:contour}
\end{center}
\end{figure}

\begin{remark}
We also remark that an alternative approach to obtaining the above positivity and monotonicity properties of the Whitham kernel is to study the functions $-x \Diff_x K(x)$ and $x^2 \Diff_x^2 K (x)$. These functions are regular at the origin and one can show that their Fourier transforms $\Diff_\xi(\xi m(\xi))$ and $\Diff_\xi^2 (\xi^2 m(\xi))$, respectively, are positive definite.
\end{remark}

We now improve upon Proposition~\ref{prop:decay} by taking advantage of  the expression \eqref{eq:explicit expression for Whitham kernel} for the kernel \(K\). For the technique behind our approach, we refer the reader to~\cite{CKP66}.

\begin{corollary}\label{cor:improved asymptotics}
The Whitham kernel satisfies
\[
K(x)=\frac{\sqrt{2}}{\pi\sqrt{|x|}} \exp(-{\textstyle \frac{\pi}{2}|x|}) +\bigOh(|x|^{-3/2}\exp(-{\textstyle \frac{\pi}{2}|x|}))
\]
as \(|x|\to \infty\).
\end{corollary}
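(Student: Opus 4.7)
The plan is to exploit the series representation from Proposition~\ref{prop:Whitham formula} and apply a Watson-type asymptotic analysis to the \(n=1\) term, while bounding the remaining terms as exponentially smaller than the claimed error. Since \(K\) is even, it suffices to treat \(x>0\).

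First, the tail \(n\ge 2\) is negligible. For \(n \geq 2\) and \(s \in [(2n-1)\pi/2, n\pi]\) one has \(\exp(-s|x|) \leq \exp(-3\pi|x|/2)\). The integrability of \(\sqrt{|\tan s|/s}\) on each such interval---with a square-root singularity \(|\tan s|\sim 1/|s-(2n-1)\pi/2|\) at the left endpoint and linear vanishing at the right, and an extra decay \(s^{-1/2}\) summable in \(n\)---yields
\[
\sum_{n=2}^\infty \frac{1}{\pi}\int_{(2n-1)\pi/2}^{n\pi} \exp(-s|x|)\sqrt{|\tan s|/s}\,\diff s = \bigOh(\exp(-3\pi|x|/2)),
\]
which is comfortably absorbed in \(\bigOh(|x|^{-3/2}\exp(-\pi|x|/2))\).

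For the \(n=1\) term, substituting \(u = s - \pi/2\) and using \(\tan(\pi/2 + u) = -\cot u\) on \((0, \pi/2)\) rewrites it as
\[
\frac{\exp(-\pi|x|/2)}{\pi}\int_0^{\pi/2} \exp(-u|x|)\,\Phi(u)\,\diff u, \qquad \Phi(u) := \sqrt{\frac{\cot u}{\pi/2 + u}}.
\]
The Laurent expansion \(\cot u = u^{-1} - u/3 + \bigOh(u^3)\) combined with \((\pi/2 + u)^{-1/2} = \sqrt{2/\pi}(1 - u/\pi + \bigOh(u^2))\) gives
\[
\Phi(u) = \sqrt{\frac{2}{\pi u}} + \bigOh(u^{1/2})\quad\text{as } u\to 0^+.
\]
Since \(\Phi\) is continuous and bounded on \([\pi/4, \pi/2]\) (it vanishes like \(\sqrt{\pi/2-u}\) at the upper endpoint) while \(\sqrt{2/(\pi u)}\) is also bounded there, the same estimate \(\Phi(u) - \sqrt{2/(\pi u)} = \bigOh(\sqrt{u})\) extends uniformly to all of \((0, \pi/2)\).

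Finally, the identities \(\int_0^\infty \exp(-u|x|) u^{-1/2}\,\diff u = \sqrt{\pi/|x|}\) and \(\int_0^\infty \exp(-u|x|) u^{1/2}\,\diff u = \tfrac{\sqrt{\pi}}{2}|x|^{-3/2}\), together with the fact that the tails from \(\pi/2\) to \(\infty\) in each of these integrals are exponentially small in \(|x|\), yield
\[
\int_0^{\pi/2} \exp(-u|x|)\Phi(u)\,\diff u = \sqrt{\frac{2}{|x|}} + \bigOh(|x|^{-3/2}).
\]
Multiplying by \(\exp(-\pi|x|/2)/\pi\) and combining with the tail bound on the \(n\ge 2\) terms yields the claim. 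The main technical point is the uniform error estimate \(\Phi(u) - \sqrt{2/(\pi u)} = \bigOh(\sqrt{u})\) across the whole interval \((0,\pi/2)\); the Laurent expansion supplies this only near the singular endpoint, so a separate endpoint analysis at \(u = \pi/2\) is needed to promote it to a uniform bound.
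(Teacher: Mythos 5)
Your proof is correct and follows essentially the same strategy as the paper: apply Watson's lemma to the contribution of the first branch-cut interval in the series from Proposition~\ref{prop:Whitham formula}, and bound everything else as exponentially smaller than the claimed error. The only difference is cosmetic: the paper truncates the first interval at \(3\pi/4\) so that the remainder factor \(h(s)=\frac1\pi\sqrt{|\tan s|(s-\pi/2)/s}\) stays smooth and a mean-value-theorem estimate suffices, whereas you keep the whole interval \([\pi/2,\pi]\), peel off the singular part \(\sqrt{2/(\pi u)}\) via the Laurent expansion of \(\cot u\), and handle the upper endpoint \(u=\pi/2\) by a boundedness argument---which you correctly flag as the point needing attention.
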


\begin{proof}
Note that
\begin{equation}
\label{eq:K splitting}
K(x)=\frac1{\pi}\int_{\frac{\pi}{2}}^{\frac{3\pi}{4}} \exp(-s|x|) \sqrt{\frac{|\tan s|}{s}}\, \diff s 
+\int_{\frac{3\pi}{4}}^\infty \exp(-s|x|) g(s)\, \diff s,
\end{equation}
where
\[
g(s)=\frac1\pi\sum_{n=1}^\infty \sqrt{\frac{|\tan s|}{s}}\chi_{((2n-1)\pi/2, n\pi)}(s).
\]
The integral of \(g\) over each interval \(((2n-1)\pi/2, n\pi)\) can be estimated by the same constant (due to the periodicity of \(\tan s\)) and we therefore find that the second term in \eqref{eq:K splitting} is \(\bigOh(\exp(-{\textstyle \frac{3\pi}{4}}|x|))\)
as \(|x|\to \infty\). On the other hand, letting
\[
h(s)=\frac1{\pi}\sqrt{\frac{|\tan s|(s-\frac{\pi}{2})}{s}},
\]
which is smooth on the interval \( [\frac{\pi}{2}, \frac{3\pi}{4}]\), 
we can write the first term in \eqref{eq:K splitting} as 
\[
\int_{\frac{\pi}{2}}^{\frac{3\pi}{4}} \frac{\exp(-s|x|)}{\sqrt{s-\frac{\pi}{2}}} h(s)\, \diff s=
\frac{\exp(-{\textstyle \frac{\pi}{2}}|x|)}{\sqrt{|x|}}\int_0^{\frac{\pi}{4}|x|} \frac{\exp(-u)}{\sqrt{u}} h({\textstyle \frac{\pi}{2}+\frac{u}{|x|})}\, \diff u,
\]
where \(u=(s-\frac{\pi}{2})|x|\).
By the mean value theorem, we have that
\[
h({\textstyle \frac{\pi}{2}+\frac{u}{|x|})}=h({\textstyle \frac{\pi}{2}})+ \bigOh({\textstyle \frac{u}{|x|}}),
\]
uniformly for \(0\le u\le \frac{\pi}{4}|x|\). Estimating
\[
\int_0^{\frac{\pi}{4}|x|} \frac{\exp(-u)}{\sqrt{u}} \frac{u}{|x|}\, \diff u \le 
\frac1{|x|} \int_0^{\infty} \sqrt{u} \exp(-u)\, \diff u
\]
and
\[
\int_{\frac{\pi}{4}|x|}^\infty \frac{\exp(-u)}{\sqrt{u}}\, \diff u=\bigOh(\exp(-{\textstyle \frac{\pi}{4}} |x|)),
\]
we therefore obtain that 
\begin{align*}
\int_0^{\frac{\pi}{4}|x|} \frac{\exp(-u)}{\sqrt{u}} h({\textstyle \frac{\pi}{2}+\frac{u}{|x|})}\, \diff u
&=\int_0^{\infty} \frac{\exp(-u)}{\sqrt{u}} h({\textstyle \frac{\pi}{2}})\, \diff u
+\bigOh( |x|^{-1})\\
&=\frac{\sqrt{2}}{\pi}+\bigOh(|x|^{-1}),
\end{align*}
which concludes the proof.
\end{proof}

\section{The periodised Whitham kernel and the operator \(L\)}\label{sec:Kp}
We introduce the periodised Whitham kernel 
\begin{equation}\label{def:Kp}
K_{P}(x) = \sum_{n \in \Z} K(x + nP), 
\end{equation}
for \(P \in (0,\infty)\). 
Note that this sum is absolutely convergent, in view of that \(K\) has rapid decay. Note also that the evenness of $K$ is inherited by $K_P$.

Equivalently, $K_P$ can be expressed as the Fourier series
\[
K_P(x)=\frac{1}{P} \sum_{n\in \Z} m\left(\frac{2\pi n}{P}\right)\exp\left(\frac{2\pi i n x}{P}\right).
\]
For convenience we shall accept also \(P=\infty\), with the convention \(K_\infty = K\). The periodisation \(K_P\) is introduced to facilitate the analysis of periodic solutions satisfying certain sign conditions in a half-period.

Using the exponential decay of $K$ and all of its derivatives, one obtains directly the corresponding description of \(K_P\) (note here, though, that the singularity is repeated periodically at all integer multiples of \(P\)).

\begin{proposition}
\label{prop:periodic kernel decomposition}
The periodic Whitham kernel satisfies
\[
K_P(x)=\frac{1}{\sqrt{2\pi|x|}}+K_{P,\text{reg}}(x)
\]
where $K_{P, \text{reg}}$ is real analytic in $(-P, P)$. 
\end{proposition}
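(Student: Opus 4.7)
The plan is to isolate the singularity at $x = 0$ by applying Proposition~\ref{prop:kernel decomposition} to the $n = 0$ term of \eqref{def:Kp}, and then to show that what remains assembles into a real analytic function on $(-P,P)$. Writing
\[
K_P(x) = K(x) + \sum_{n\neq 0} K(x+nP)
\]
and using Proposition~\ref{prop:kernel decomposition} on $K(x)$ immediately reduces the claim to showing real analyticity on $(-P,P)$ of
\[
K_{P,\text{reg}}(x) := K_{\text{reg}}(x) + \sum_{n\neq 0} K(x+nP).
\]
The first summand is real analytic on $\R$ by Proposition~\ref{prop:kernel decomposition}, and each individual term $K(\cdot+nP)$ in the tail is real analytic on $(-P,P)$, because $K$ is real analytic on $\R\setminus\{0\}$ while $x+nP \in \R\setminus\{0\}$ whenever $x \in (-P,P)$ and $n\neq 0$. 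The only non-trivial step is therefore the real analyticity of the infinite sum itself.

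For this I would extend $K$ to a complex analytic function in a uniform horizontal strip about $\R\setminus\{0\}$ and invoke the Weierstrass theorem on uniform limits of analytic functions. The contour-deformation argument underlying Proposition~\ref{prop:decay}---shifting the $\xi$-integration to $\im\xi = s_0\in(0,\pi/2)$ and integrating by parts using $|\partial_\xi m(\xi+is_0)|=\bigOh(|\xi|^{-3/2})$---goes through for complex $z = x+iy$ with $|y|$ sufficiently small, and yields, for every $\epsilon>0$ and every $s_0\in(0,\pi/2)$, an analytic extension of $K$ to the domain $\{z : |\re z|\geq\epsilon,\, |\im z|<\delta\}$ (for some $\delta=\delta(\epsilon,s_0)>0$), together with the bound $|K(z)|\lesssim e^{-s_0|\re z|}$ for $|\re z|\geq 1$. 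Alternatively, for $\re z > 0$ this is a direct consequence of the Laplace-type representation \eqref{eq:explicit expression for Whitham kernel} of Proposition~\ref{prop:Whitham formula}, and the case $\re z < 0$ is symmetric via $K(-z)=K(z)$.

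Given this complex extension, fix $\epsilon>0$ and set $R_\epsilon = \{z : |\re z|\leq P-2\epsilon,\, |\im z|<\delta\}$. For $z\in R_\epsilon$ and $n\neq 0$ one has $|\re(z+nP)|\geq 2\epsilon$ when $|n|=1$ and $|\re(z+nP)|\geq(|n|-1)P+2\epsilon$ when $|n|\geq 2$, so every shifted argument lies in the analyticity domain of $K$, and the bound above furnishes $|K(z+nP)|\lesssim e^{-s_0|n|P/2}$ for $|n|$ large, while the finitely many remaining terms are uniformly bounded on the compact set $R_\epsilon$. The Weierstrass majorant test therefore delivers uniform convergence of $\sum_{n\neq 0}K(z+nP)$ on $R_\epsilon$, and Weierstrass' theorem on uniform limits of analytic functions gives complex analyticity on the interior of $R_\epsilon$, whence $K_{P,\text{reg}}$ is real analytic on $(-P+2\epsilon, P-2\epsilon)$. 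Letting $\epsilon\searrow 0$ finishes the proof. The principal obstacle is establishing the complex analytic extension of $K$ with the uniform exponential decay stated above; this is a mild strengthening of Proposition~\ref{prop:decay} (and can be read off from the Laplace representation of Proposition~\ref{prop:Whitham formula}), after which the tail-sum argument is a routine application of Weierstrass.
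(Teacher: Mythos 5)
Your decomposition and strategy coincide with the paper's: split off the $n=0$ term of \eqref{def:Kp}, apply Proposition~\ref{prop:kernel decomposition}, and identify $K_{P,\text{reg}} = K_{\text{reg}} + \sum_{n\ne 0}K(\cdot+nP)$. Where you improve on the paper's one-sentence proof is in justifying the real analyticity of the tail sum. The paper appeals only to termwise differentiability (via the rapid decay from Proposition~\ref{prop:decay}), which secures $C^\infty$ regularity but not, by itself, real analyticity---uniform convergence on compacta of a series of real-analytic functions, together with all derivatives, does not force the limit to be analytic without uniform control on Taylor coefficients. Your fix---extend $K$ to a complex-analytic function on a neighbourhood of $\R\setminus\{0\}$ with exponential decay in $|\re z|$, then invoke Weierstrass---is the right one, and the second route you propose (reading the extension off the Laplace-type representation \eqref{eq:explicit expression for Whitham kernel}, which is visibly analytic in $\re z>0$, and then using evenness) carries it out cleanly.

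One caveat: the first route you propose, rerunning the contour deformation of Proposition~\ref{prop:decay} at a complex argument $z$, does not go through as stated. On the shifted line $\im\xi = s_0$ the modulus $|e^{iz\xi}| = e^{-\xi\,\im z}$ grows exponentially at one end of the real $\xi$-axis whenever $\im z\neq 0$, and this cannot be compensated by the merely polynomial bound $|\partial_\xi^n m(\xi+is_0)| = \bigOh(|\xi|^{-1/2-n})$, no matter how many integrations by parts one performs. To obtain the complex extension by contour manipulation one must push the deformation to $\im\xi\to\infty$ and collect the branch-cut contributions---but that is exactly the derivation of \eqref{eq:explicit expression for Whitham kernel}, i.e.\ your second route. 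So rest the argument on the Laplace representation; with that choice your proof is complete and in fact tightens the paper's terse justification.
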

\begin{proof}
This follows by combining Proposition~\ref{prop:kernel decomposition} with the definition of \(K_P\), noting that one may differentiate termwise in \eqref{def:Kp} to arbitrary high order in view of Proposition~\ref{prop:decay}. 
\end{proof}

We obtain monotonicity results for \(K_P\) by applying the following general result concerning periodic kernels. The latter follows from Bernstein's theorem by noting that \(\diff \mu(s)/s\) is a finite measure (cf.~the proof of Proposition~\ref{prop:Stieltjes Fourier}), and by differentiating under the integral sign in the below formula for \(g_P\).

\begin{proposition}\label{prop:periodic formula}
Let \(g\in L^1(\R)\) be even and completely monotone. Then the periodisation 
\[
g_P(x) =\sum_{n\in \Z} g(x+nP)
\]
converges for each \(x \in \R \setminus P\Z\), and is given by
\[
g_P(x)= 
\int_{(0,\infty)} \frac{\cosh(s(x-\frac{P}{2}-P \lfloor \frac{x}{P}\rfloor))}{\sinh(\frac{P}{2} s)} \, \diff\mu(s),
\]
for \(x\in \R \setminus P\Z\), and \(g\)  the Laplace transform of \(\mu\).
Hence, \(g_P\) is smooth in \(\R \setminus P\Z\) and completely monotone on \((0, P/2)\). 
\end{proposition}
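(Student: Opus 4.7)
The plan is to apply Bernstein's theorem to express $g$ as a Laplace transform and then compute the periodisation by exchanging sum and integral, arriving at a geometric series in $s$ that sums to $\cosh/\sinh$. Since $g$ is even and completely monotone, we have
\[
g(x) = \int_{[0,\infty)} \exp(-s|x|)\, \diff\mu(s),
\]
and the assumption $g \in L^1(\R)$ forces $\mu(\{0\}) = 0$ and $\int_{(0,\infty)} s^{-1}\, \diff\mu(s) < \infty$, exactly as in the proof of Proposition~\ref{prop:Stieltjes Fourier}. Convergence of the series defining $g_P(x)$ at each $x \in \R \setminus P\Z$ will follow from the monotonicity of $g$ on $(0,\infty)$ (a consequence of complete monotonicity) together with $g \in L^1(\R)$, by the usual comparison of a monotone series with an improper integral.

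Next I would fix $x \in (0,P)$, invoke Tonelli's theorem to justify
\[
g_P(x) = \int_{(0,\infty)} \sum_{n \in \Z} \exp(-s|x+nP|)\, \diff\mu(s),
\]
and split the inner sum into $n \geq 0$ and $n \leq -1$. For $x \in (0,P)$ both subsums are geometric in $\exp(-sP)$, with first terms $\exp(-sx)$ and $\exp(-s(P-x))$ respectively, giving
\[
\sum_{n \in \Z} \exp(-s|x+nP|) = \frac{\exp(-sx) + \exp(-s(P-x))}{1 - \exp(-sP)} = \frac{\cosh(s(x - P/2))}{\sinh(sP/2)}
\]
after multiplying numerator and denominator by $\exp(sP/2)$. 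This yields the claimed representation for $x \in (0, P)$. The general case $x \in \R \setminus P\Z$ reduces to this one: the series defining $g_P$ is manifestly $P$-periodic, and the argument $x - P/2 - P \lfloor x/P \rfloor$ in the statement is exactly the translate of $x$ into $(0, P)$ shifted by $-P/2$.

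For smoothness and complete monotonicity on $(0, P/2)$, I would differentiate the integrand under the integral sign in $x$. For $x$ ranging in any compact subinterval $[\epsilon, P - \epsilon]$, the bound
\[
\left|\frac{s^n \cosh(s(x-P/2))}{\sinh(sP/2)}\right| \lesssim_{n,\epsilon} \frac{1}{s}\bigl(1 + s^{n+1}\exp(-\epsilon s)\bigr)
\]
combined with $\int s^{-1}\, \diff\mu(s) < \infty$ justifies arbitrary-order differentiation and shows $g_P \in C^\infty(\R \setminus P\Z)$. For the monotonicity, observe that
\[
\partial_x^n \cosh(s(x - P/2)) = s^n \cdot \begin{cases} \cosh(s(x-P/2)), & n \text{ even},\\ \sinh(s(x-P/2)), & n \text{ odd},\end{cases}
\]
and on $(0, P/2)$ we have $x - P/2 < 0$, so $\sinh(s(x-P/2)) < 0$ for $s > 0$. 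Hence $(-1)^n \partial_x^n \bigl(\cosh(s(x-P/2))/\sinh(sP/2)\bigr) \geq 0$ for all $n$, and integrating against $\diff\mu(s)$ gives $(-1)^n g_P^{(n)}(x) \geq 0$ on $(0, P/2)$.

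The only mildly delicate point is the interchange of differentiation and integration up to the boundary behaviour of $\mu$ near zero; this is controlled by the finiteness of $\int s^{-1}\, \diff\mu(s)$, so no serious obstacle is expected. The algebraic core of the proof is the explicit summation of the two-sided geometric series in $s$, and the rest is bookkeeping.
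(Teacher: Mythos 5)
Your proposal is correct and follows essentially the same route the paper indicates: apply Bernstein's theorem, observe that $L^1$-integrability forces $\diff\mu(s)/s$ to be finite (and $\mu(\{0\})=0$), exchange sum and integral by Tonelli, sum the two-sided geometric series to obtain the $\cosh/\sinh$ kernel, and then differentiate under the integral sign for smoothness and complete monotonicity on $(0,P/2)$. The paper only sketches this in the paragraph preceding the proposition, but your fleshing out of the geometric-series computation and the dominating bound is exactly the intended argument.
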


Combining Proposition~\ref{prop:periodic formula} with the formula \eqref{eq:explicit expression for Whitham kernel} for the Whitham kernel, we get the following result for its periodisation.

\begin{corollary}\label{cor:periodic formula}
The $P$-periodic Whitham kernel is given by
\[
K_P(x)=\frac{1}{\pi}
 \sum_{n=1}^\infty\int_{\frac{(2n-1)\pi}{2}}^{n\pi}\frac{\cosh(s(x-\frac{P}{2}-P \lfloor \frac{x}{P}\rfloor))}{\sinh(\frac{P}{2} s)} \sqrt{\frac{|\tan s|}{s}}\,\diff s,
\]
for \(x \in \R \setminus P\Z\).
Hence, \(K_P\) is smooth in \(\R \setminus P\Z\) and completely monotone on \((0, P/2)\). 
In particular, it is positive, strictly decreasing on \((0,P/2)\), and convex on \((0, P)\).
\end{corollary}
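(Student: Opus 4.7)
The plan is to apply Proposition~\ref{prop:periodic formula} directly to $g=K$ and substitute the explicit density identified in Proposition~\ref{prop:Whitham formula}. Three hypotheses must first be verified: that $K$ is even, completely monotone, and integrable. Evenness follows from the definition \eqref{eq:K} (the symbol $m$ is real and even), complete monotonicity on $(0,\infty)$ is the content of Proposition~\ref{prop:Whitham formula}, and $L^1$-integrability is obtained by combining the local description from Proposition~\ref{prop:kernel decomposition} (the singular part $1/\sqrt{2\pi|x|}$ is integrable at the origin, the regular part real-analytic) with the exponential decay from Proposition~\ref{prop:decay} at infinity.

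Once the hypotheses are in place, Proposition~\ref{prop:periodic formula} delivers
\[
K_P(x) = \int_{(0,\infty)} \frac{\cosh\bigl(s(x - \tfrac{P}{2} - P\lfloor \tfrac{x}{P}\rfloor)\bigr)}{\sinh(\tfrac{P}{2}s)}\, \diff\mu(s),
\]
where $\mu$ is the Borel measure on $[0,\infty)$ such that $K(x) = \int_{[0,\infty)} \exp(-s|x|)\, \diff\mu(s)$. From the formula \eqref{eq:explicit expression for Whitham kernel} in Proposition~\ref{prop:Whitham formula}, this measure is absolutely continuous, with no mass at $0$, and density
\[
\frac{1}{\pi} \sum_{n=1}^\infty \sqrt{\frac{|\tan s|}{s}}\,\chi_{((2n-1)\pi/2,\, n\pi)}(s).
\]
Substituting this density yields the closed expression for $K_P$ claimed in the corollary, the interchange of sum and integral being justified by nonnegativity of the integrand (Tonelli).

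The smoothness of $K_P$ on $\R \setminus P\Z$ and its complete monotonicity on $(0, P/2)$ are direct consequences of Proposition~\ref{prop:periodic formula}, and they imply both positivity and strict monotonicity on $(0,P/2)$ (by Bernstein's theorem applied to the representing measure, which is nontrivial and not concentrated at $0$). For convexity on the full period $(0,P)$, note that $K_P$ is even and $P$-periodic, hence symmetric about $P/2$: $K_P(P-x) = K_P(-x) = K_P(x)$. Since Proposition~\ref{prop:periodic kernel decomposition} shows that $K_P$ is smooth at $P/2$, the symmetry forces $K_P''(P/2) \ge 0$ as well, extending the nonnegativity of $K_P''$ from $(0,P/2)$ to all of $(0,P)$ and yielding convexity there.

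The argument is essentially bookkeeping, combining two results already established; no single step stands out as a serious obstacle. The most delicate point is perhaps the final convexity claim on $(0,P)$, which requires using both the periodicity-and-evenness symmetry and the smoothness at $P/2$ to patch complete monotonicity on the half-period into convexity across the midpoint.
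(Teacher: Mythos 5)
Your proof is correct and follows the paper's own route: the paper derives Corollary~\ref{cor:periodic formula} by the single remark that it results from combining Proposition~\ref{prop:periodic formula} with the explicit representation \eqref{eq:explicit expression for Whitham kernel} from Proposition~\ref{prop:Whitham formula}. Your verification of the hypotheses on $K$ (even, completely monotone, integrable), the identification of the representing measure and substitution via Tonelli, and the symmetry-plus-continuity argument extending convexity from $(0,P/2)$ to the full period $(0,P)$ correctly fill in the details the paper leaves implicit.
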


\begin{remark}
The monotonicity of $K_P$ in $(0,P/2)$ can in fact be proved using just the convexity and monotonicity of \(K\) (and the rapid decay of $K$ and its derivatives). Indeed, one has that
\begin{equation}\label{eq:monotone_periodic}
\begin{aligned}
\Diff_x K_P(x) &= \sum_{n \in \N} K'(x + nP)\\ 
&= \sum_{k \geq 0} \left( K'(x+kP) + K'(x-(k+1)P)\right).
\end{aligned}
\end{equation}
Let \(a_k = x + kP\) and \(b_k = x - (k+1)P\). Then \(K'(a_k) < 0\), whereas \(K'(b_k) > 0\), for all \(x \in (0,P/2)\) and all integers \(k \geq 0\). We thus want 
\[
|K'(a_k)| >  |K'(b_k)|. 
\]
By the evenness of \(K\), we have \(|K'(\zeta)| = |K'(-\zeta)|\) for any \(\zeta \neq 0\). And by  Proposition~\ref{prop:Whitham formula},
\(|\zeta| \mapsto |K'(|\zeta|)|\) is furthermore a strictly decreasing function of \(|\zeta|\), so that 
\[
|a_k| < (k+1/2)P < |b_k|
\] 
guarantees that \(|K'(a_k)| >  |K'(b_k)|\). Hence, the sum in \eqref{eq:monotone_periodic} is strictly negative for all \(x \in (0, P/2)\). Similarly, one may prove strict signs of higher-order derivatives of \(K_P\) on \((0,P/2)\) by using the signs of higher-order derivatives of \(K\).
\end{remark}

\subsection*{The operator \(L\)}
Now let \(L\) be the operator 
\[
L \colon f \mapsto K \ast f, 
\]
defined via duality on the space \(\Schwartz^\prime(\R)\) of tempered distributions. From the definition \eqref{def:Kp} of \(K_P\), one readily sees that for a  continuous periodic function \(f\), the operator \(L\)  is given by  \(\int_{-P/2}^{P/2} K_P(x-y)f(y)\,\diff y\), and more generally by \(\int_{\R} K(x-y)f(y)\,\diff y\) if \(f\) is bounded and continuous.  

Let \(H^s(\R)\), \(s \in \R\), denote the Sobolev (Bessel-potential) spaces with norm 
\[
\|f\|_{H^s(\R)} = \left(\int_\R   (1+k^2)^s  |\hat f(k)|^2 \, \diff k\right)^{1/2},
\]
and let \(H^s(\s_P)\) be the corresponding Sobolev spaces of \(P\)-periodic tempered distributions \(f = (1/P)\sum_{k \in \Z} \hat f_k \exp(i2\pi k \cdot /P)\) satisfying
\[
\|f\|^2_{H^s(\s_P)} = \sum_{k \in \Z} \left(1+\frac{4 \pi^2 k^2}{P^2}\right)^s |\hat f_k|^2 < \infty,
\]
where \(\s_P\) denotes the circle of circumference \(P>0\).
Note that \(H^0(\s_P)\) can be identified with \({L^2}(-P/2,P/2)\). For a nonnegative integer \(k\) we let \({BUC}^k(\R)\) be the space of $k$ times continuously differentiable functions on \(\R\), whose derivatives of order less than or equal to \(k\) are bounded and uniformly continuous on \(\R\).

We shall say that a function \(\varphi \colon \R \to \R\) is  \emph{H\"older continuous of regularity \(\alpha \in (0,1)\) at a point \(x \in \R\)}  if
\[
|\varphi|_{C^\alpha_x} := \sup_{h \ne 0} \frac{|\varphi(x+h) - \varphi(x)|}{|h|^\alpha} <  \infty,
\]
and let 
\[
C^\alpha(\R)=\{ \varphi \in BUC(\R) \colon \sup_{x}|\varphi|_{C_x^\alpha}<\infty\},
\]
\[
C^{k,\alpha}(\R)=\{\varphi \in BUC^k(\R) \colon \varphi^{(k)} \in C^\alpha(\R)\}.
\]
With \(C^{k, \alpha}(\s_P)\) we denote the closed subspace of \(C^{k, \alpha}(\R)\) consisting of functions that are \(P\)-periodic.

We also recall the definition of Besov spaces \(B_{p,q}^s(\R)\) using the Littlewood--Paley decomposition. 
Let \(\varrho \in C_0^\infty(\R)\) with \(\varrho(\xi)=1\) if \(|\xi|\le 1\), \(\varrho(\xi)=0\) if \(|\xi|\ge 2\), and define
\[
\gamma(\xi)=\varrho(\xi)-\varrho(2\xi),
\]
so that $\gamma\in C_0^\infty(\R)$ is supported in the set $1/2\le |\xi|\le 2$. We let
\[
\gamma_0(\xi)=\varrho(\xi)
\]
and
\[
\gamma_j(\xi)=\gamma(\xi/2^j), \quad j\ge 1,
\]
so that $\gamma_j$ is supported in the set $2^{j-1}\le |\xi|\le 2^{j+1}$ when $j\ge 1$ and $|\xi|\le 2$ when $j=0$, and
\[
\sum_{j=0}^\infty \gamma_j(\xi)=1, \quad \xi\in \R.
\]
For a tempered distribution \(f\in \SS'(\R)\) we let \(\gamma_j(D) f=\F^{-1}(\gamma_j(\xi) \hat f(\xi))\), so that
\[
f=\sum_{j=0}^\infty \gamma_j(D) f.
\]
The Besov spaces \(B_{p,q}^s(\R)\), \(s\in \R\), \(1\le p\le\infty\), \(1\le q<\infty\) are defined by
\[
\Bigg\{f \in \SS'(\R)\colon \|f\|_{B_{p,q}^s(\R)} :=\Big[\sum_{j=0}^\infty (2^{sj}\|\gamma_j(D) f\|_{L^p(\R)})^q\Big]^{\frac1q}<\infty\Bigg\}.
\]
For $1\le p\le \infty$ and $q=\infty$, we instead define
\[
B_{p,\infty}^s(\R) = \Bigg\{f \in \SS'(\R)\colon \|f\|_{B_{p,\infty}^s(\R)}:=\sup_{j\ge 0} 2^{sj}\|\gamma_j(D) f\|_{L^p(\R)}<\infty\Bigg\}.
\]
For a \(P\)-periodic tempered distribution \(f =  (1/P)\sum_{k \in \Z} \hat f_k \exp(i2\pi k \cdot /P)\), 
we have the identity
\[
\gamma_j(D) f=\frac1P \sum_{k \in \Z} \gamma_j\left(\frac{2\pi k}{P}\right) \hat f_k \exp\left(\frac{2\pi ik x}{P}\right),
\]
so that $\gamma_j(D) f$ is a trigonometric polynomial. The space $B_{p,q}^s(\s_P)$, $s\in \R$, $1\le p,q\le \infty$, is defined by replacing $\R$ by \(\s_P\) in the definition of $B_{p,q}^s(\R)$.
Note that \(B_{2,2}^s\) can be identified with \(H^s\), on the line as well as on the circle.

We furthermore define the Zygmund spaces \( {\mathcal C}^s\), \(s \in \R\), by
\[
{\mathcal C}^s(X)=B_{\infty, \infty}^s(X), \qquad X \in \{\R, \s_P\},
\]
and recall that \({\mathcal C}^s=C^{\lfloor s\rfloor, s-\lfloor s\rfloor}\)  for \(s \in \R_{> 0} \setminus \N\),
while \(W^{s, \infty} \subsetneq {\mathcal C}^s\) when $s$ is a nonnegative integer; both relations valid on the line as well as on the circle.
It follows from the estimate \(|D^n_\xi m(\xi)|\lesssim (1+|\xi|)^{-1/2-n}\), \(n\ge 0\), that \(L\) defines a bounded operator
\[
L\colon B_{p,q}^s(X)\to B_{p,q}^{s+\frac12}(X), \qquad  X \in \{\R,\s_P\},
\]
see, e.g.,~\cite{MR2064734, BahouriCheminDanchin11}.
In particular, the operators
\begin{align*}
L\colon H^s(X) \to H^{s+\frac{1}{2}}(X) \quad &\text{and} \quad L\colon {\mathcal C}^s(X)\to {\mathcal C}^{s+\frac{1}{2}}(X)
\end{align*}
are bounded  on \(\R\) as well as on \(\s_P\), for all \(s \in \R\). For an introduction to periodic distributions and function spaces, we refer the reader to Chapter 9 in the monograph \cite{MR781540} by Triebel.

\subsection*{Notational conventions}
To ease notation in what follows, when \(f(x) > g(x)\) for all \(x\) we write \(f > g\), and when \(f(x) \geq g(x)\) for all \(x\) with \(g(x_0) > f(x_0)\) for some \(x_0\) we write \(f \gneq g\). Similarly,  \(f \geq g\)  indicates that \(f(x) \geq g(x)\) for all \(x\), and \(f = g\)  that \(f\) and \(g\) are identically equal. We make the corresponding conventions for the relations \(<\), \(\lneq\), and \(\leq\). Finally, \(f \neq g\) denotes the situation when  \(f(x_0) \neq g(x_0)\) for some \(x_0\).

\begin{lemma}\label{lemma:Lmonotone}
\(L\) is strictly monotone:  \(Lf > Lg\) if \(f\) and \(g\) are bounded and continuous functions with \(f \gneq g\).
\end{lemma}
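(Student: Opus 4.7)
The plan is to set $h = f - g$, which by assumption is bounded, continuous, nonnegative, and strictly positive at some point $x_0 \in \R$. By linearity of $L$, it suffices to show that $Lh(x) > 0$ for every $x \in \R$. Since $h$ is bounded and continuous, the integral representation
\[
Lh(x) = \int_\R K(x-y) h(y) \, \diff y
\]
is valid at every point; here the integral is absolutely convergent because $K$ has rapid (in fact exponential) decay at infinity by Proposition~\ref{prop:decay}, and because $K(x) \sim (2\pi|x|)^{-1/2}$ near the origin by Proposition~\ref{prop:kernel decomposition}, which is locally integrable.

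Next I would invoke the key positivity statement already proven: by Proposition~\ref{prop:Whitham formula}, $K$ is completely monotone and in particular strictly positive on $(0,\infty)$, and by evenness $K(z) > 0$ for every $z \in \R\setminus\{0\}$. Since $h$ is continuous and $h(x_0) > 0$, there exist $\delta > 0$ and $c > 0$ such that $h(y) \geq c$ for all $y$ in the open interval $I_{x_0} = (x_0 - \delta, x_0 + \delta)$. Combining this with $h \geq 0$ globally and the positivity of $K$ off the origin, we obtain for any fixed $x \in \R$
\[
Lh(x) \geq c \int_{I_{x_0}} K(x-y)\, \diff y > 0,
\]
where the last integral is strictly positive because the integrand is strictly positive almost everywhere on $I_{x_0}$ (the single point $y = x$, if contained in $I_{x_0}$, has measure zero).

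There is essentially no technical obstacle here; the only points to handle carefully are the justification that the convolution is pointwise well-defined (handled by the local integrability and exponential decay of $K$) and the fact that $K$ does not vanish on any set of positive measure (handled by the explicit representation \eqref{eq:explicit expression for Whitham kernel}, which gives strict positivity of $K$ on $\R\setminus\{0\}$). The argument applies equally well to the periodic setting: if $f,g$ are in addition $P$-periodic, then $Lh(x) = \int_{-P/2}^{P/2} K_P(x-y) h(y)\,\diff y$ and the same reasoning together with the strict positivity of $K_P$ on $(0,P/2)$ from Corollary~\ref{cor:periodic formula} (and evenness, periodicity) yields the conclusion.
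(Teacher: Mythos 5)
Your argument is correct and is exactly the approach the paper takes; the paper's proof is a one-line appeal to the strict positivity of \(K\) and \(K_P\) (Proposition~\ref{prop:Whitham formula} and Corollary~\ref{cor:periodic formula}), and your write-up simply fleshes out the routine details of that observation.
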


\begin{proof}
This is immediate from the strict positivity of \(K\) and \(K_P\), see Proposition~\ref{prop:Whitham formula} and Corollary~\ref{cor:periodic formula}.
\end{proof}

\begin{lemma}\label{lemma:parity}
The operator \(L\) is parity-preserving on any period \(P \in (0,\infty]\), and \(L f(x) > 0\) on \((-P/2,0)\) for \(f\) \(P\)-periodic, odd and continuous with \(f \gneq 0\) on \((-P/2,0)\).
\end{lemma}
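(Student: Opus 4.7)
The parity-preserving property is a direct consequence of the evenness of the kernel. For $P<\infty$ and $f$ bounded, continuous and $P$-periodic, write
\[
Lf(-x) = \int_{-P/2}^{P/2} K_P(-x - y)\,f(y)\,\diff y = \int_{-P/2}^{P/2} K_P(x+y)\,f(y)\,\diff y
\]
by evenness of $K_P$. The substitution $z = -y$, together with $P$-periodicity of $f$, then gives $Lf(-x) = \int_{-P/2}^{P/2} K_P(x-z)\,f(-z)\,\diff z$, which equals $\pm Lf(x)$ depending on whether $f$ is even or odd. The case $P = \infty$ is identical with $K$ in place of $K_P$ and integration over $\R$.

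For the positivity claim I would split the integral at $0$ and use the oddness of $f$ to fold the $(0, P/2)$-piece back onto $(-P/2, 0)$, yielding
\[
Lf(x) = \int_{-P/2}^{0} \bigl[K_P(x - y) - K_P(x + y)\bigr]\,f(y)\,\diff y.
\]
Since $f \geq 0$ on $(-P/2, 0)$ and $f > 0$ on some open subinterval (by continuity and $f \gneq 0$), the assertion reduces to showing that the bracket is strictly positive for all $x, y \in (-P/2, 0)$ with $y \neq x$.

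Setting $a = -x$, $b = -y \in (0, P/2)$, evenness of $K_P$ gives $K_P(x - y) = K_P(|a - b|)$ with $|a - b| \in [0, P/2)$, where $K_P$ is strictly decreasing by Corollary~\ref{cor:periodic formula}. Combining evenness with $P$-periodicity yields also the symmetry $K_P(P/2 + t) = K_P(P/2 - t)$, so that $K_P(x + y) = K_P(a + b) = K_P(\min(a + b,\, P - a - b))$, with argument in $(0, P/2]$. The inequality $K_P(|a - b|) > K_P(x + y)$ thus reduces, by the strict monotonicity of $K_P$ on $(0, P/2)$, to
\[
|a - b| < \min(a + b,\, P - a - b),
\]
which holds because $|a - b| < a + b$ and $|a - b| + a + b = 2 \max(a, b) < P$. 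The diagonal $y = x$ (i.e.\ $a = b$) is harmless in view of the integrable $|t|^{-1/2}$-singularity of $K_P$ at the origin from Proposition~\ref{prop:periodic kernel decomposition}.

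The case $P = \infty$ requires only the simpler inequality $|a - b| < a + b$ and strict monotonicity of $K$ on $(0, \infty)$ from Proposition~\ref{prop:Whitham formula}. There is no real obstacle in this argument; the only slightly delicate point is the ``folding'' of $K_P$ across $P/2$ when $|x + y| \geq P/2$, which is handled by the symmetry of $K_P$ about $P/2$, and the rest is routine.
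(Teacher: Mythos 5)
Your proof is correct and follows essentially the same route as the paper: the parity claim by evenness of $K_P$ and a change of variables, then the folding $Lf(x) = \int_{-P/2}^0 [K_P(x-y) - K_P(x+y)]\,f(y)\,\diff y$, followed by a comparison of distances (to $0$, or to $-P$ / across $P/2$, which is the same reflection symmetry) using the strict monotonicity of $K_P$ on $(0,P/2)$ from Corollary~\ref{cor:periodic formula}. The only cosmetic difference is your substitution $a=-x$, $b=-y$; the paper phrases the same inequalities as $|x-y| < \min\{|x+y|,\,P+x+y\}$, and both handle the diagonal $y=x$ as a measure-zero non-issue.
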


\begin{proof}
To see that \(L\) is parity-preserving, note that
\begin{align*}
Lf(x) \pm Lf(-x) &= \int_{-P/2}^{P/2} K_P(x-y)f(y)\,\diff y \pm \int_{-P/2}^{P/2} K_P(-x-y)f(y)\,\diff y\\
&= \int_{-P/2}^{P/2} K_P(x-y) \left( f(y) \pm f(-y)\right)\,\diff y,
\end{align*}
which vanishes for \(f\) odd (even). 

Next, assume that \(f\) is \(P\)-periodic, odd and continuous, with \(f(x) \geq 0\) for \(-P/2 \leq x \leq 0\) and \(f(x) \neq 0\) for some \(x\). Then
\begin{equation}\label{eq:Lf}
\begin{aligned}
Lf(x) &= \int_{-P/2}^{P/2} K_P(x-y)f(y)\,\diff y\\
&= \int_{-P/2}^0 \left(K_P(x-y) - K_P(x+y)\right) f(y)\,\diff y.
\end{aligned}
\end{equation}
Fix \(x \in (-P/2,0)\), and consider first the case when \(P=\infty\). We have
\[
|x+y| = |x| + |y| >  |x - y|, \qquad y \in (-P/2,0),
\] 
so that the distance from the origin to the point \(x+y\) is larger than that to the point \(x-y\). Since \(K\) is even and strictly decreasing as a function of the distance to the origin, we find that \(K(x-y) > K(x+y)\), which proves the desired conclusion.

When \(P < \infty\), fix again \(x\ \in (-P/2,0)\) and consider \(y\) such that
\[
-P<x+y\leq x-y <P/2.
\]
This covers all possible values of \(x+y\) and \(x-y\) appearing in the last integral in \eqref{eq:Lf}. Since \(K_P\) decreases with the distance to the origin in the period \((-P/2,P/2)\), and is periodic with period \(P\), all that remains is to convince ourselves that
\[
 \dist(x-y,0) < \min\{\dist(x+y,0),\dist(x+y,-P)\}.
\]
The inequality \(|x-y| < |x+y|\) holds as above for all same-signed \(y \neq x\), as does \(|x-y| < P+x+y\) for all \(x,y > -P/2\). This proves that  \(K_P(x-y) > K_P(x+y)\) almost everywhere in the interval, and therefore \(Lf(x) > 0\) when \(-P/2 < x < 0\). (Note that when \(x\) is a multiple of \(P\) the same argument fails, because \(K_P\) is even around those points.)
\end{proof}


\section{Nodal pattern}\label{sec:nodal}

In this section we record some basic properties of Whitham solutions, including a priori bounds, regularity estimates and a maximum principle. This will enable us to establish a nodal pattern for solutions of the steady Whitham equation, recorded in Theorem~\ref{thm:nodal}. It is interesting to note how the equation \eqref{eq:steadywhitham} features many of the properties of elliptic equations.

We remind the reader that with a solution of the steady Whitham equation we mean a real-valued, continuous and bounded function \(\varphi\) that satisfies \eqref{eq:steadywhitham} pointwise. \emph{In the case \(P < \infty\), we presuppose that any solution \(\varphi\) is \(P\)-periodic.} We shall furthermore call a continuous and bounded function \(\varphi\) a \emph{supersolution} of \eqref{eq:steadywhitham} if 
\[
-\mu \varphi + L\varphi + \varphi^2 \leq 0.
\]
Similarly, we call \(\varphi\) a \emph{subsolution} of \eqref{eq:steadywhitham} if \(-\mu \varphi + L\varphi + \varphi^2 \geq 0\).

\begin{lemma}\label{lemma:apriori_1}
Let \(I_\mu\) be the closed interval with endpoints \(\mu -1\) and \(0\). Then supersolutions \(\varphi_1\) and subsolutions \(\varphi_2\) of the steady Whitham equation \eqref{eq:steadywhitham} satisfy
\[
\inf \varphi_1 \in I_\mu \quad\text{ and }\quad \sup \varphi_2 \not\in \mathrm{int}(I_\mu),
\]
where \(\mathrm{int}(I_\mu)\) is the interior of the interval \(I_\mu\). In particular, if \(\varphi\) is a solution, then either  \(\mu-1\le \inf \varphi \le 0\le \sup\varphi\) or \(\varphi(x)\equiv \mu-1\) if \(\mu\le 1\), while 
either \(0\le \inf \varphi \le \mu-1\le \sup\varphi\) or \(\varphi(x) \equiv 0\) if \(\mu>1\).
\end{lemma}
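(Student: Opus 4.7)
The plan rests on two facts about the convolution operator \(L\). First, since the kernel \(K\) is nonnegative (Proposition~\ref{prop:Whitham formula}) and its total mass is \(m(0)=1\), the operator \(L\) is a positive averaging operator preserving constants; consequently \(\inf \varphi \le L\varphi(x) \le \sup \varphi\) for every bounded continuous \(\varphi\) and every \(x\). The same applies periodically using \(K_P\). Second, rewriting the equation as \(L\varphi = \varphi(\mu-\varphi)\), the two constant solutions of \eqref{eq:steadywhitham} are precisely \(0\) and \(\mu-1\), the roots of \(c(c-(\mu-1))=0\). This algebra is what puts \(I_\mu\) in the picture.

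To bound a supersolution \(\varphi_1\) from below, write \(m_1 = \inf \varphi_1\) and pick a minimising sequence \(x_n\) with \(\varphi_1(x_n) \to m_1\). The averaging property gives \(L\varphi_1(x_n) \ge m_1\), while the supersolution inequality yields
\[
L\varphi_1(x_n) \le \mu\varphi_1(x_n) - \varphi_1(x_n)^2 = \varphi_1(x_n)\bigl(\mu - \varphi_1(x_n)\bigr).
\]
Passing to the limit in \(n\) (the right-hand side is continuous in the value \(\varphi_1(x_n)\), so no regularity of \(\varphi_1\) beyond boundedness is needed) gives
\[
m_1 \le m_1(\mu - m_1), \qquad \text{i.e.,} \qquad m_1\bigl(m_1 - (\mu-1)\bigr) \le 0,
\]
which is exactly the condition \(m_1 \in I_\mu\). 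The subsolution bound is entirely symmetric: along a maximising sequence one derives \(M_2 \ge M_2(\mu - M_2)\), i.e., \(M_2(M_2-(\mu-1)) \ge 0\), which says \(M_2\) lies outside \(\mathrm{int}(I_\mu)\).

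For the dichotomy when \(\varphi\) is a solution, apply both inequalities simultaneously and split on \(\mu\). If \(\mu \le 1\), then \(I_\mu = [\mu-1, 0]\) so \(\mu-1 \le \inf\varphi \le 0\); the supremum must satisfy either \(\sup\varphi \ge 0\) (giving the first alternative) or \(\sup\varphi \le \mu-1\), in which case \(\sup\varphi \le \mu-1 \le \inf\varphi\) forces \(\varphi \equiv \mu-1\). The case \(\mu > 1\) is identical with the roles of \(0\) and \(\mu-1\) exchanged: \(I_\mu = [0,\mu-1]\), so \(0 \le \inf\varphi \le \mu-1\), and either \(\sup\varphi \ge \mu-1\) or \(\sup\varphi \le 0\), the latter forcing \(\varphi \equiv 0\).

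I do not anticipate a serious obstacle here. The only mildly delicate point is that, on the whole line, extrema need not be attained, but the sequence argument above sidesteps this entirely because the averaging estimate \(L\varphi(x) \ge \inf\varphi\) is pointwise valid. In the periodic case the infimum and supremum are attained outright, so one may simply evaluate at the extremal point.
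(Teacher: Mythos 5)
Your proof is correct and takes essentially the same approach as the paper: the key step in both is the averaging bound \(\inf\varphi \le L\varphi \le \sup\varphi\) (which the paper phrases via strict monotonicity of \(L\) together with \(Lc=c\)), combined with the quadratic rearrangement of the super/subsolution inequality evaluated along an extremising sequence. The only cosmetic difference is that the paper completes the square to \((\varphi - \mu/2)^2 \le \mu^2/4 - L\varphi\) before inserting the bound, whereas you insert the bound directly into \(L\varphi \le \varphi(\mu-\varphi)\); both yield the same factorisation \(m(m-(\mu-1))\le 0\).
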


\begin{remark}\label{rem:sign-changing}
One can see directly from \eqref{eq:steadywhitham} that if a solution satisfies \(\varphi(x) = 0\) for some \(x\), then \(\varphi\) is either identically zero or it changes sign. Indeed, at \(x\) the equation reduces to \(L\varphi = 0\). Since \(L\) is a strictly monotone operator this is impossible unless \(\varphi\) is sign-changing or vanishes everywhere.
\end{remark}

\begin{proof}
For \(\varphi_1\) a supersolution, we have \(\left( \varphi_1 - \frac{\mu}{2} \right)^2 \leq \frac{\mu^2}{4} - L\varphi_1\). By Lemma~\ref{lemma:Lmonotone}, \(L\) is a strictly monotone operator. Since furthermore \(L c = c\) for constants \(c\), we therefore obtain that
\[
\left( \varphi_1 - \frac{\mu}{2} \right)^2 \leq \frac{\mu^2}{4} - \inf \varphi_1.
\]
In particular, \(\left( \inf \varphi_1 - \frac{\mu}{2} \right)^2 \leq \frac{\mu^2}{4} - \inf \varphi_1\), and therefore
\[
(\inf \varphi_1) \left( \inf \varphi_1 - (\mu -1)\right) \leq 0.
\]
Similarly, for \(\varphi_2\) a subsolution one obtains
\(
\left( \varphi_2 - \frac{\mu}{2} \right)^2 \geq \frac{\mu^2}{4} - L\varphi_2 \geq \frac{\mu^2}{4} - \sup\varphi_2,
\)
which yields that \((\sup \varphi_2) \left( \sup \varphi_2 - (\mu -1)\right) \geq 0\).
\end{proof}

Since a solution is simultaneously a subsolution and a supersolution, in that case we obtain from Lemma \ref{lemma:apriori_1} that either 
\(\inf \varphi=\sup\varphi=\mu-1\) or \(\sup \varphi \ge 0\) when \(\mu \le 1\).  When \(\mu>1\), we obtain instead that either \(\inf \varphi=\sup\varphi=0\) or \(\sup \varphi\ge \mu-1\).

The following lemma is the equivalent of  the strong maximum principle for elliptic equations.

\begin{lemma}[Touching lemma]\label{lemma:touching}
Let \(\varphi_1\) be a supersolution and \(\varphi_2\) a subsolution of the steady Whitham equation \eqref{eq:steadywhitham} with \(\varphi_1 \geq \varphi_2\). Then either 
\begin{itemize}
\item[(i)] \(\varphi_1 = \varphi_2\), or
\item[(ii)] \(\varphi_1 > \varphi_2\) with \(\varphi_1 + \varphi_2 < \mu\).
\end{itemize}
\end{lemma}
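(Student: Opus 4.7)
The plan is to subtract the two differential inequalities, interpret the result as a pointwise inequality for the nonnegative difference $w = \varphi_1-\varphi_2$, and then use the strict monotonicity of $L$ (Lemma~\ref{lemma:Lmonotone}) as the replacement for Hopf's lemma in the elliptic setting.

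First, I would subtract the supersolution inequality from the subsolution inequality. Writing $w := \varphi_1 - \varphi_2 \geq 0$ and using $\varphi_1^2 - \varphi_2^2 = (\varphi_1 + \varphi_2) w$, one obtains
\[
Lw(x) \;\leq\; \bigl(\mu - \varphi_1(x) - \varphi_2(x)\bigr)\, w(x)
\]
pointwise (in $x \in \R$ in the line case, in $x \in \R$ with $P$-periodicity in the periodic case; the computation is identical since $L$ acts as convolution with $K$ or $K_P$, both of which are positive by Proposition~\ref{prop:Whitham formula} and Corollary~\ref{cor:periodic formula}).

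Next I would dichotomise on whether $w$ vanishes identically. Suppose $w \not\equiv 0$. Since $w \gneq 0$ and $L$ is strictly monotone by Lemma~\ref{lemma:Lmonotone}, we have $Lw(x) > 0$ for every $x$. If there existed a point $x_0$ with $w(x_0)=0$, the displayed inequality would force $Lw(x_0) \leq 0$, contradicting $Lw(x_0) > 0$. Hence $w > 0$ everywhere, which rules out the intermediate possibility and proves the first half of alternative~(ii).

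Finally, with $w > 0$ and $Lw > 0$ everywhere, the displayed inequality gives
\[
\bigl(\mu - \varphi_1(x) - \varphi_2(x)\bigr)\, w(x) \;\geq\; Lw(x) \;>\; 0,
\]
and dividing by $w(x) > 0$ yields $\varphi_1(x) + \varphi_2(x) < \mu$ for all $x$, completing alternative~(ii). The only genuine content is the strict positivity of $Lw$ whenever $w \gneq 0$, which is where all the work sits; once that "touching" step is in hand the rest is a straightforward rearrangement. I expect no real obstacle beyond correctly invoking strict positivity of the convolution kernel, which is already guaranteed by the results of Sections~\ref{sec:K}--\ref{sec:Kp}.
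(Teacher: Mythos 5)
Your proof is correct and follows essentially the same route as the paper: subtract the sub/super inequalities, invoke strict monotonicity of $L$ (Lemma~\ref{lemma:Lmonotone}) to get $Lw>0$ when $w\gneq 0$, and then read off both strict inequalities from the pointwise bound $Lw\le(\mu-\varphi_1-\varphi_2)w$. The only cosmetic difference is that you first rule out zeros of $w$ by contradiction and then divide, whereas the paper observes directly that the product $(\mu-\varphi_1-\varphi_2)w$ is strictly positive and argues from the sign of $w$.
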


\begin{proof}
Since \(\varphi_1, \varphi_2\) are super- and subsolutions, respectively, we have that,
\[
(\mu-L) (\varphi_1 - \varphi_2) \geq  (\varphi_1 + \varphi_2) (\varphi_1 - \varphi_2).
\]
If \(\varphi_1 = \varphi_2\) there is nothing to prove, so assume that \(\varphi_1\gneq \varphi_2\). Since \(L\) is a strictly monotone operator, we then see that
\[
\left( \mu - (\varphi_1 + \varphi_2) \right) (\varphi_1 - \varphi_2) \geq L(\varphi_1 - \varphi_2) > 0.
\]
Thus \(\varphi_1(x) \neq \varphi_2(x)\) and \(\mu \neq \varphi_1(x) + \varphi_2(x)\) for all \(x\). In view of that \(\varphi_1 \geq \varphi_2\) by assumption, so that  \(\varphi_1 - \varphi_2\) and \(L(\varphi_1 - \varphi_2)\) therefore have the same sign,  we conclude that \(\varphi_1 > \varphi_2\) and \(\varphi_1 + \varphi_2 < \mu\).
\end{proof}

\begin{corollary}\label{cor:mubound}
Except for the trivial solutions \(\varphi = 0\) and \(\varphi = \mu -1\), supersolutions \(\varphi\) of the steady Whitham equation \eqref{eq:steadywhitham} satisfy 
\begin{align*}
\mu -1 < \varphi < 1, &\qquad  \mu < 1,\\
0 < \varphi < \mu &\qquad \mu > 1.
\end{align*}
\end{corollary}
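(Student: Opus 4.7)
The plan is to combine the a priori bound from Lemma~\ref{lemma:apriori_1} with the touching lemma (Lemma~\ref{lemma:touching}), using the two constant solutions $\varphi\equiv 0$ and $\varphi\equiv \mu-1$ as comparison functions. Both are trivially checked to solve \eqref{eq:steadywhitham} (since $Lc=c$ yields $-\mu c+c+c^2=c(c-\mu+1)$, which vanishes for $c=0$ and $c=\mu-1$), so they serve as subsolutions in the touching lemma.

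First I would treat the case $\mu<1$. Here $I_\mu=[\mu-1,0]$, and Lemma~\ref{lemma:apriori_1} gives $\inf \varphi \ge \mu-1$, so $\varphi \ge \mu-1$ pointwise. Taking $\varphi_1=\varphi$ (supersolution) and $\varphi_2=\mu-1$ (subsolution) in Lemma~\ref{lemma:touching}, the two alternatives are: $\varphi\equiv \mu-1$ (which is the excluded trivial solution), or $\varphi>\mu-1$ together with $\varphi+(\mu-1)<\mu$, i.e.\ $\varphi<1$. This yields the claimed strict bounds $\mu-1<\varphi<1$.

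Next I would treat the case $\mu>1$. Now $I_\mu=[0,\mu-1]$ and Lemma~\ref{lemma:apriori_1} gives $\inf \varphi \ge 0$, so $\varphi\ge 0$. Taking $\varphi_1=\varphi$ and $\varphi_2\equiv 0$ in Lemma~\ref{lemma:touching}, the two alternatives are $\varphi\equiv 0$ (the other trivial solution) or $\varphi>0$ together with $\varphi+0<\mu$, i.e.\ $\varphi<\mu$. This gives the bounds $0<\varphi<\mu$.

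There is no real obstacle here; the argument is essentially a one-line application of the touching lemma once the correct comparison functions have been identified. The only small point of care is verifying that $0$ and $\mu-1$ are genuine subsolutions (so that Lemma~\ref{lemma:touching} applies), and checking that the pointwise inequality $\varphi\ge \varphi_2$ required by the touching lemma is a direct consequence of Lemma~\ref{lemma:apriori_1} in each case. The excluded equalities in the corollary statement correspond exactly to the first alternative in Lemma~\ref{lemma:touching}.
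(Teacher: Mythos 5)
Your proof is correct and follows essentially the same route as the paper's: obtain the one-sided bound from Lemma~\ref{lemma:apriori_1}, then apply the touching lemma (Lemma~\ref{lemma:touching}) against the constant solution $\mu-1$ (when $\mu<1$) or $0$ (when $\mu>1$) to upgrade to strict inequalities and the upper bound. The only cosmetic difference is that you spell out the verification that $0$ and $\mu-1$ are genuine solutions (hence subsolutions), which the paper states without computation.
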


\begin{remark}
For \(\mu = 1\), the solution \(\varphi = 0\) is the unique integrable supersolution, cf.~Proposition~\ref{prop:mean} below.
\end{remark}

\begin{proof}
For any \(\mu \in \R\), the functions \(x \mapsto \mu -1 \) and \(x \mapsto 0\) are constant solutions of the Whitham equation. 

For \(\mu < 1\), Lemma~\ref{lemma:apriori_1} guarantees that \(\varphi \geq \mu - 1\) for any supersolution \(\varphi\). Thus, we may apply Lemma~\ref{lemma:touching} with \(\varphi_1 = \varphi\) and \(\varphi_2 = \mu -1\) to conclude that \(\varphi > \mu -1 \) and, furthermore, that \(\varphi + \mu -1 < \mu\), meaning that \(\varphi < 1\).

Similary, for \(\mu > 1\) we have \(\varphi \geq 0\) for any supersolution \(\varphi\) by Lemma~\ref{lemma:apriori_1}, and the rest of the conclusion follows from Lemma~\ref{lemma:touching}.
\end{proof}

\begin{proposition}\label{prop:mean}
Any solution \(\varphi \in {L^1}(-P/2,P/2)\) of the steady Whitham equation \eqref{eq:steadywhitham} belongs to \({L^2}(-P/2,P/2)\) and has negative or positive mean according to whether \(\mu < 1\) or \(\mu > 1\). More precisely,
\[
(\mu -1)\int_{-P/2}^{P/2} \varphi(x) \,\diff x = \|\varphi\|_{{L^2}(-P/2,P/2)}^2 
\]
holds for any \(\mu \in \R\), where \(P \in (0,\infty]\) is the possibly infinite period.
\end{proposition}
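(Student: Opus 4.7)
The plan is to integrate the steady Whitham equation \(-\mu\varphi + L\varphi + \varphi^2 = 0\) over one period \((-P/2, P/2)\) and exploit the identity \(\int K\,\diff x = m(0) = 1\), respectively \(\int_{-P/2}^{P/2} K_P\,\diff x = m(0) = 1\) in the periodic case. For the case \(P = \infty\), this identity follows from Fourier inversion once we know \(K \in L^1(\R)\), which is immediate from the splitting \(K(x) = \frac{1}{\sqrt{2\pi |x|}} + K_{\mathrm{reg}}(x)\) of Proposition~\ref{prop:kernel decomposition} combined with the exponential decay of Proposition~\ref{prop:decay}. In the periodic case, the Fourier-series representation \(K_P(x) = \frac{1}{P}\sum_{n\in \Z} m(2\pi n/P) \exp(2\pi i n x/P)\) identifies the zeroth Fourier coefficient as \(\frac{1}{P} m(0) = \frac{1}{P}\).

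First I would note that \(\varphi \in L^2(-P/2, P/2)\) is automatic: by definition a solution is continuous and bounded, and combined with the \(L^1\)-assumption (which is vacuous when \(P < \infty\)) this gives
\[
\int_{-P/2}^{P/2} \varphi^2\,\diff x \le \|\varphi\|_\infty \|\varphi\|_{L^1(-P/2,P/2)} < \infty.
\]

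Second I would compute \(\int L\varphi\,\diff x\) via Fubini's theorem. Since \(K \in L^1(\R)\) (resp. \(K_P \in L^1(-P/2,P/2)\)) and \(\varphi \in L^1\), the double integral of \(|K(x-y)\varphi(y)|\) is finite, so Fubini applies and yields
\[
\int L\varphi\,\diff x = \int \varphi(y) \left(\int K(x-y)\,\diff x\right) \diff y = \int \varphi(y)\,\diff y,
\]
with the analogous manipulation, using \(K_P\) in place of \(K\), in the periodic setting. Integrating the steady Whitham equation therefore gives
\[
-\mu \int \varphi\,\diff x + \int \varphi\,\diff x + \int \varphi^2\,\diff x = 0,
\]
which rearranges to the claimed identity \((\mu-1)\int \varphi\,\diff x = \|\varphi\|_{L^2(-P/2,P/2)}^2\).

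The sign statement is then immediate: the right-hand side is nonnegative and vanishes only if \(\varphi \equiv 0\), so for any nontrivial solution \(\int \varphi\,\diff x\) has the sign of \(\mu - 1\). The main (and only) technical obstacle is justifying \(\int K = 1\) in the non-periodic case, which reduces to confirming \(K \in L^1(\R)\) and applying Fourier inversion at \(\xi = 0\); both are readily obtained from the earlier results on \(K\).
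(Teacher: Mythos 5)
Your proof is correct and follows essentially the same route as the paper's: integrate the equation over one period and use that the zeroth Fourier mode of $K$ (respectively $K_P$) is $m(0)=1$, so that $\int L\varphi = \int\varphi$. The paper phrases this in terms of Fourier coefficients, writing $(\mu - m(0))\hat\varphi_0 = \|\varphi\|_{L^2}^2$ (with $\hat\varphi(0)$ in place of $\hat\varphi_0$ when $P=\infty$), while you spell out the Fubini step and the $L^2$-membership more explicitly; these are minor presentational differences rather than a genuinely different argument.
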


\begin{proof}
By integrating \((\mu - L)\varphi = \varphi^2\) over a period, we get
\[
\mu \int_{-P/2}^{P/2} \varphi(x)\,\diff x - \int_{-P/2}^{P/2} L\varphi(x)\,\diff x = \int_{-P/2}^{P/2} \left(\varphi(x)\right)^2\,\diff x.
\]
Consider first \(P < \infty\). In view of that \(\int_{-P/2}^{P/2} \varphi(x)\,\diff x = \hat \varphi_0\), we have that \((\mu - m(0)) \hat\varphi_0 = \|\varphi\|_{{L^2}(-P/2,P/2)}^2\). For \(P = \infty\), replace \(\hat \varphi_0\)  by \(\hat \varphi(0)\).
\end{proof}

The following result improves upon Lemma~\ref{lemma:touching} in the case when one has additional control of the first-order derivatives of the solutions. An important consequence of it is Theorem~\ref{thm:nodal}.

\begin{lemma}[Touching lemma for derivatives]\label{lemma:touching2}
Let \(\varphi_1, \varphi_2\) be even and continuously differentiable solutions of the steady Whitham equation \eqref{eq:steadywhitham} with \(\varphi_1 \geq \varphi_2\) and \(\varphi_1^\prime \gneq \varphi_2^\prime \geq 0\) in \((-P/2,0)\). Then \(\varphi_1^\prime > \varphi_2^\prime\) and \(\varphi_1 + \varphi_2 < \mu\) in \((-P/2,0)\).
\end{lemma}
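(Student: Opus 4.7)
The plan is to differentiate the steady Whitham equation, obtain a linear identity for the difference $u := \varphi_1' - \varphi_2'$, and exploit the parity properties from Lemma~\ref{lemma:parity} in exactly the way the ordinary touching lemma uses the strict monotonicity of $L$.

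First, I would show that since both $\varphi_1,\varphi_2 \in C^1$ solve \eqref{eq:steadywhitham}, differentiation yields
\[
(\mu - L)\varphi_i' = 2\varphi_i \varphi_i', \qquad i=1,2.
\]
The interchange of $L$ and $\Diff_x$ is a routine matter: $K$ has an integrable singularity at the origin and rapid decay by Propositions~\ref{prop:kernel decomposition} and~\ref{prop:decay}, and on the periodic side $L$ is a Fourier multiplier of negative order. Setting $u=\varphi_1'-\varphi_2'$ and $w=\varphi_1-\varphi_2$ and subtracting the two equations, a short algebraic manipulation gives the identity
\begin{equation}\label{eq:plan_main}
\bigl(\mu - \varphi_1-\varphi_2\bigr)\,u \;=\; Lu \;+\; (\varphi_1'+\varphi_2')\,w.
\end{equation}

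Next, I would use the evenness of $\varphi_1$ and $\varphi_2$: $u$ is continuous and odd, $w$ is continuous and even, and by hypothesis $u \gneq 0$ on $(-P/2,0)$. Lemma~\ref{lemma:parity} therefore applies to $u$, giving
\[
Lu(x) > 0 \quad \text{for all } x \in (-P/2,0).
\]
On the same interval, the hypothesis $\varphi_1' \geq \varphi_2' \geq 0$ yields $\varphi_1'+\varphi_2'\geq 0$, and $w \geq 0$ by assumption, so the entire right-hand side of \eqref{eq:plan_main} is strictly positive on $(-P/2,0)$.

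Finally, reading \eqref{eq:plan_main} pointwise for $x\in(-P/2,0)$, the strict positivity of the right-hand side forces simultaneously $u(x)\neq 0$ and $\mu-\varphi_1(x)-\varphi_2(x)\neq 0$; since $u\geq 0$ and $\varphi_1+\varphi_2$ is continuous with, say, $\varphi_1+\varphi_2<\mu$ holding on a nonempty set via Lemma~\ref{lemma:touching} (or directly by the sign of the product), both quantities must be strictly positive throughout $(-P/2,0)$. This yields the two desired strict inequalities at once.

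The only mildly delicate point is the commutation $(L\varphi_i)'=L\varphi_i'$; everything else is an algebraic rearrangement plus one application of Lemma~\ref{lemma:parity}. I expect no real obstacle beyond cleanly justifying that commutation, for which the integration-by-parts identity
$(K_P * \varphi)'(x) = \int \Diff K_P(x-y)\,\varphi(y)\,\diff y = \int K_P(x-y)\,\varphi'(y)\,\diff y$
(valid after splitting off the integrable singularity of $K_P$) is standard.
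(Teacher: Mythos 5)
Your proposal is correct and follows essentially the same route as the paper: differentiate and subtract the two equations, rearrange to the identity $(\mu-\varphi_1-\varphi_2)(\varphi_1'-\varphi_2') = L(\varphi_1'-\varphi_2') + (\varphi_1-\varphi_2)(\varphi_1'+\varphi_2')$, apply Lemma~\ref{lemma:parity} to the odd function $\varphi_1'-\varphi_2'$ to get strict positivity of $L(\varphi_1'-\varphi_2')$ on $(-P/2,0)$, and then read off both strict inequalities from the sign of the product. The only cosmetic difference is that your closing step detours through Lemma~\ref{lemma:touching} and a continuity remark, which is unnecessary: since $u\geq 0$ and the pointwise product $(\mu-\varphi_1-\varphi_2)\,u$ is strictly positive, one immediately gets $u>0$ and $\mu-\varphi_1-\varphi_2>0$ at every point, exactly as the paper concludes.
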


\begin{remark}
It follows from Lemma~\ref{lemma:touching2} that \(\varphi_1 > \varphi_2\) on the whole interval \((-P/2,P/2)\). This is due to the evenness of \(\varphi_1, \varphi_2\) and the strict inequality \(\varphi_1^\prime > \varphi_2^\prime\) on \((-P/2,0)\).
\end{remark}

\begin{proof}
Both \(\varphi_1\) and \(\varphi_2\) solve \((\mu - L)\varphi = \varphi^2\), so we may subtract and differentiate to obtain
\begin{equation}\label{eq:mu-L}
(\mu - L) (\varphi_1^\prime - \varphi_2^\prime) = (\varphi_1^2 - \varphi_2^2)^\prime.
\end{equation}
Since \( (\varphi_1^\prime + \varphi_2^\prime)(\varphi_1 - \varphi_2)  \geq 0\) on \((-P/2,0)\) by assumption, we see by expanding the right-hand side of \eqref{eq:mu-L} that
\[
(\mu -\varphi_1  - \varphi_2) (\varphi_1^\prime - \varphi_2^\prime) \geq L (\varphi_1^\prime - \varphi_2^\prime) > 0 \qquad \text{ on }\quad  (-P/2,0),
\]
where Lemma~\ref{lemma:parity} has been used with \(f=\varphi_1'-\varphi_2'\). Because \(\varphi_1^\prime \geq \varphi_2^\prime\) on \((-P/2,0)\), this implies both that \(\varphi_1^\prime > \varphi_2^\prime\) and that \(\varphi_1 + \varphi_2 < \mu\) on that interval. 
\end{proof}

We have now come to the main result of this section, which we shall later need to prove that the global bifurcation branch of steady solutions does not form a closed loop.

\begin{theorem}[Nodal pattern]\label{thm:nodal}
Let \(P \in (0,\infty]\). Any \(P\)-periodic, nonconstant and even solution \(\varphi \in BUC^1(\R)\) of the steady Whitham equation \eqref{eq:steadywhitham} which is nondecreasing on \((-P/2,0)\) satisfies 
\[\varphi' > 0, \: \varphi < \frac{\mu}{2} \qquad\text{ on }\quad (-P/2,0).
\]
For such a solution one necessarily has \(\mu>0\).

If furthermore \(\varphi \in BUC^2(\R)\), then \(\varphi < \frac{\mu}{2}\) everywhere and
\[ 
\varphi^{\prime\prime}(0) < 0.
\]
For \(P < \infty\) one has  \(\varphi^{\prime\prime}(\pm P/2) > 0\). If in addition  \(\mu \leq 1\) and \(\varphi(0) \geq  \frac{\mu}{4}\), then 
\[
\varphi^{\prime\prime}(\textstyle{\frac{P}{2}}) -  \varphi^{\prime\prime}(0)\\ \geq   \frac{1}{2} |K_P^\prime(\frac{P}{4})|. 
\]
\end{theorem}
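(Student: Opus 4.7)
The strategy is to bootstrap from the hypothesis $\varphi' \geq 0$ on $(-P/2,0)$ by successively differentiating the steady equation and reading off sign information for $L$ via the complete monotonicity of $K_P$ on $(0,P/2)$ (Corollary~\ref{cor:periodic formula}), parity, and the strict positivity of $L$ on odd nonnegative functions (Lemma~\ref{lemma:parity}). Differentiating \eqref{eq:steadywhitham} once gives $(\mu - 2\varphi)\varphi' = L\varphi'$; since $\varphi$ is even and nonconstant, $\varphi'$ is odd, continuous, nonnegative on $(-P/2,0)$ and not identically zero, so Lemma~\ref{lemma:parity} yields $L\varphi' > 0$ on $(-P/2,0)$ and therefore both $\varphi' > 0$ and $\varphi < \mu/2$ on that interval. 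For the sign of $\mu$, I would argue by contradiction: if $\mu \leq 0$, then $\sup\varphi = \varphi(0) \leq \mu/2 \leq 0$ clashes with the apriori bound $\sup\varphi \geq 0$ from Lemma~\ref{lemma:apriori_1}, and the single borderline $\mu = 0$, $\varphi(0) = 0$ is ruled out by Remark~\ref{rem:sign-changing}.

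Under $\varphi \in BUC^2(\R)$, differentiating once more produces the pivotal identity $(\mu - 2\varphi)\varphi'' - 2(\varphi')^2 = L\varphi''$. I would compute $L\varphi''(0) = \int K_P(y)\varphi''(y)\,\diff y$ by integration by parts: since $\varphi'(0) = \varphi'(\pm P/2) = 0$ and $\varphi \in C^2$ gives $\varphi'(\epsilon) = O(\epsilon)$ near the $|y|^{-1/2}$-singularity of $K_P$ (Proposition~\ref{prop:periodic kernel decomposition}), the boundary contribution $K_P(\epsilon)\varphi'(\epsilon) = O(\epsilon^{1/2})$ vanishes in the limit and the integrand $K_P'\varphi' = O(|y|^{-1/2})$ is integrable. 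This yields
\[
L\varphi''(0) = -\int_{-P/2}^{P/2} K_P'(y)\varphi'(y)\,\diff y < 0,
\]
the strict inequality coming from $K_P'$ and $\varphi'$ both being odd and both strictly negative on $(0,P/2)$. Inserted into the pivotal identity at $x = 0$, namely $(\mu - 2\varphi(0))\varphi''(0) = L\varphi''(0) < 0$, this excludes both $\varphi(0) = \mu/2$ and $\varphi''(0) = 0$; coupled with the max/min behaviour of $\varphi$ at the crest, both inequalities become strict. A verbatim shift of the same argument to $x = \pm P/2$, where $\varphi$ is minimised, gives $L\varphi''(P/2) > 0$ and hence $\varphi''(\pm P/2) > 0$.

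For the quantitative bound I would subtract the two integration-by-parts identities and use parity to collapse to
\[
L\varphi''({\textstyle\frac{P}{2}}) - L\varphi''(0) = 2\int_0^{P/2}\bigl(|K_P'({\textstyle\frac{P}{2}} - y)| + |K_P'(y)|\bigr)|\varphi'(y)|\,\diff y.
\]
Complete monotonicity of $K_P$ makes $(|K_P'|)'' = -K_P''' \geq 0$ on $(0,P/2)$, so $|K_P'|$ is convex there and Jensen applied at $\{y, P/2 - y\}$ yields the pointwise estimate $|K_P'(y)| + |K_P'(P/2-y)| \geq 2|K_P'(P/4)|$. Combined with $\int_0^{P/2}|\varphi'| = \varphi(0) - \varphi(P/2) \geq \mu/4$---where the bound on $\varphi(0)$ is hypothesised and $\varphi(P/2) \leq 0$ comes from Lemma~\ref{lemma:apriori_1}---this produces a lower bound of the correct order on $L\varphi''(P/2) - L\varphi''(0)$. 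Passing back to $\varphi''(P/2) - \varphi''(0)$ itself would then be done through the algebraic rearrangement
\[
(\mu - 2\varphi({\textstyle\frac{P}{2}}))(\varphi''({\textstyle\frac{P}{2}}) - \varphi''(0)) = L\varphi''({\textstyle\frac{P}{2}}) - L\varphi''(0) + 2(\varphi(0) - \varphi({\textstyle\frac{P}{2}}))|\varphi''(0)|,
\]
using $\mu - 2\varphi(P/2) \leq 2 - \mu$ (from $\varphi(P/2) \geq \mu - 1$, Corollary~\ref{cor:mubound}) together with the amplification $|\varphi''(0)| = |L\varphi''(0)|/(\mu - 2\varphi(0)) \geq 2|L\varphi''(0)|/\mu$ coming from the hypothesis $\varphi(0) \geq \mu/4$. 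The hard part is precisely this last combination: the loss from the denominator $\mu - 2\varphi(P/2)$ has to be exactly compensated by the auxiliary nonnegative term $2(\varphi(0) - \varphi(P/2))|\varphi''(0)|$ in order to deliver the clean constant $\tfrac{1}{2}$.
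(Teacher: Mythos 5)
The bulk of your argument matches the paper's proof step for step: the first differentiation and Lemma~\ref{lemma:parity} for the strict signs of $\varphi'$ and $\mu-2\varphi$; the contradiction route for $\mu>0$; the integration by parts past the $|y|^{-1/2}$-singularity for $\varphi''(0)<0$ and $\varphi''(\pm P/2)>0$ (the paper handles the singular endpoint with an $\varepsilon$-truncation as you do); and the subtraction of the two identities combined with convexity of $|K_P'|$ for the quantitative estimate. Your derived formula $L\varphi''(\frac{P}{2})-L\varphi''(0)=2\int_0^{P/2}\bigl(|K_P'(\frac{P}{2}-y)|+|K_P'(y)|\bigr)|\varphi'(y)|\,\diff y$ and the Jensen bound $|K_P'(y)|+|K_P'(\frac{P}{2}-y)|\ge 2|K_P'(\frac{P}{4})|$ are both correct, and in fact slightly sharper than the one-sided estimate the paper uses there.

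However, the last step---the one you yourself flag as ``the hard part''---is not closed, and the specific tools you propose do not close it. If you replace $\varphi(0)-\varphi(\frac{P}{2})$ by $\mu/4$ and $\mu-2\varphi(\frac{P}{2})$ by $2-\mu$ before comparing, the first term gives only $\mu/(2-\mu)\cdot|K_P'(\frac{P}{4})|$, which is below $\frac12|K_P'(\frac{P}{4})|$ whenever $\mu<2/3$. You then hope the auxiliary term $2(\varphi(0)-\varphi(\frac{P}{2}))|\varphi''(0)|$ makes up the deficit via $|\varphi''(0)|\ge 2|L\varphi''(0)|/\mu$, but $|L\varphi''(0)|=2\int_0^{P/2}|K_P'(y)||\varphi'(y)|\,\diff y$ has no a priori lower bound in terms of $|K_P'(\frac{P}{4})|$ and $\mu$ (it can be tiny if $\varphi$ is close to constant), so this amplification does not deliver a clean constant. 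The repair is simply not to throw away the factor $\varphi(0)-\varphi(\frac{P}{2})$: with your Jensen bound one already has $L\varphi''(\frac{P}{2})-L\varphi''(0)\ge 4|K_P'(\frac{P}{4})|(\varphi(0)-\varphi(\frac{P}{2}))$, and comparing directly against $\frac12|K_P'(\frac{P}{4})|(\mu-2\varphi(\frac{P}{2}))$ reduces to $8\varphi(0)-6\varphi(\frac{P}{2})\ge\mu$, which follows from $\varphi(0)\ge\mu/4$ and $\varphi(\frac{P}{2})\le 0$ alone---so the auxiliary $|\varphi''(0)|$-term is not needed at all. The paper avoids your difficulty by factoring the difference in the opposite way, leaving $(\frac{\mu}{2}-\varphi(0))(\varphi''(\frac{P}{2})-\varphi''(0))\ge(|K_P'(\frac{P}{4})|-\varphi''(\frac{P}{2}))(\varphi(0)-\varphi(\frac{P}{2}))$, and then running a two-case dichotomy on whether $\varphi''(\frac{P}{2})$ exceeds $\frac12|K_P'(\frac{P}{4})|$; in either branch the conclusion follows immediately with $\varphi(0)-\varphi(\frac{P}{2})\ge\mu/4\ge\frac{\mu}{2}-\varphi(0)$.
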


\begin{proof}
To prove that  \(\varphi' > 0\) and \(\varphi < \frac{\mu}{2}\) on \((-P/2,0)\), note first that by assumption \(\varphi'\) must be odd, nontrivial, and nonnegative in \((-P/2,0)\). According to Lemma~\ref{lemma:parity}, we then have \(L\varphi' > 0\) in \((-P/2,0)\), and by \((\mu - 2 \varphi) \varphi' = L \varphi'\) also
\[
\varphi' (\mu - 2\varphi) > 0	 \qquad\text{ in } \quad (-P/2,0).
\] 
The sign of \(\mu - 2 \varphi\) can then be inferred from that of \(\varphi'\). This proves that \(\varphi^\prime > 0\) and \(\varphi < \frac{\mu}{2}\) on the open half-period \((-P/2,0)\). On the other hand, since \(\varphi\) is nonconstant it follows from Lemma~\ref{lemma:apriori_1} and Remark~\ref{rem:sign-changing} that \(\varphi(0)=\sup \varphi>0\), so that \(\mu>0\).

Now suppose that \(\varphi \in BUC^2(\R)\). To show that \(\varphi^{\prime\prime}(0)\) is strictly negative, we differentiate the equation twice to obtain that
\[
(\mu - 2\varphi) \varphi^{\prime\prime} = 2(\varphi^\prime)^2 + L \varphi^{\prime\prime}. 
\]
Evaluating this equality at \(x = 0\) using the evenness of \(K\) and \(\varphi\), we see that
\begin{align*}
\left(\frac{\mu}{2} - \varphi(0)\right) \varphi^{\prime\prime}(0) &= \int_{0}^{P/2} K_P(y) \varphi^{\prime\prime}(y)\,\diff y\\
& = \int_{0}^{{\varepsilon}} K_P(y) \varphi^{\prime\prime}(y)\,\diff y + \int_{{\varepsilon}}^{P/2} K_P(y) \varphi^{\prime\prime}(y)\,\diff y\\
&= \int_{0}^{{\varepsilon}} K_P(y) \varphi^{\prime\prime}(y)\,\diff y + \left[K_P(y) \varphi^{\prime}(y)\right]_{y= {\varepsilon}}^{y=P/2}\\ 
&\quad -\int_{{\varepsilon}}^{P/2} K_P^\prime(y) \varphi^{\prime}(y)\,\diff y.
\end{align*}
Because \(\varphi^{\prime\prime}\) is continuous and  \(K_P\) is integrable, with \(K_P(x) \sim |x|^{-\frac{1}{2}}\) for \(|x| \ll 1\), the first integral vanishes as \({\varepsilon} \to 0\). The boundary term \(K_P(P/2) \varphi^\prime(P/2)\) vanishes in view of that \(\varphi^\prime(P/2)=0\) if \(P<\infty\) and because \(\lim_{x\to \infty} K(x)=0\) and \(\varphi'\) is bounded if \(P=\infty\). Due to the regularity and evenness of \(\varphi\), we also have \(\varphi^\prime({\varepsilon}) = O({\varepsilon})\) so that \(\varphi^\prime({\varepsilon}) K_P({\varepsilon}) = O({\varepsilon}^{\frac{1}{2}}) \to 0\) as \({\varepsilon} \to 0\). 

By Corollary~\ref{cor:periodic formula} and what we just proved, both \(K_P^\prime\) and \(\varphi^\prime\) are strictly negative on \((0,P/2)\). Thus \(-\int_{{\varepsilon}}^{P/2} K_P^\prime(y) \varphi^{\prime}(y)\,\diff y\) is negative for any \({\varepsilon} > 0\), and strictly decreasing as \({\varepsilon} \searrow 0\) (note also that \(K_P\) is smooth in a vicinity of \(P/2\), see Proposition~\ref{prop:periodic kernel decomposition}). Thus, we may let \({\varepsilon} \searrow 0\) to see that
\begin{align*}
\left(\frac{\mu}{2} - \varphi(0)\right) \varphi^{\prime\prime}(0) &= - \lim_{{\varepsilon} \searrow 0} \int_{{\varepsilon}}^{P/2} K_P^\prime(y) \varphi^{\prime}(y)\,\diff y  < 0.
\end{align*}
Since \(\varphi\) is continuous with \(\varphi < \frac{\mu}{2}\) on \((-P/2,0)\), this proves that \(\varphi < \frac{\mu}{2}\) everywhere, and that \(\varphi^{\prime\prime}(0) < 0\).

When \(P < \infty\), note that \(K_P(P/2 -y) = K_P(-P/2 - y) = K_P(y + P/2)\), so that
\allowdisplaybreaks
\begin{align*}
\left(\frac{\mu}{2} -  \varphi(P/2)\right) \varphi^{\prime\prime}(P/2) &= \int_{0}^{P/2} K_P(y + P/2) \varphi^{\prime\prime}(y)\,\diff y\\
& =  \left( \int_{0}^{P/2- {\varepsilon}} + \int_{P/2- {\varepsilon}}^{P/2} \right) K_P(y + P/2) \varphi^{\prime\prime}(y)\,\diff y \\ 
&=   \left[K_P(y+P/2)\varphi^\prime(y) \right]_{y=0}^{y=P/2- {\varepsilon}}\\ 
&\quad+  \int_{P/2- {\varepsilon}}^{P/2} K_P(y + P/2) \varphi^{\prime\prime}(y)\,\diff y \\ 
&\quad-  \int_0^{P/2- {\varepsilon}} K_P^\prime(y + P/2) \varphi^{\prime}(y)\,\diff y. 
\end{align*}
By the same arguments as above all terms but the last on the right-hand side vanish as \({\varepsilon} \searrow 0\), and the term \(-\int_0^{P/2- {\varepsilon}} K_P^\prime(y + P/2) \varphi^{\prime}(y)\,\diff y\) is strictly positive and increasing as \({\varepsilon} \searrow 0\). Thus \(\varphi^{\prime\prime}(P/2) > 0\).

To prove the final estimate, note that 
\begin{equation}\label{eq:firstbound}
\begin{aligned}
&( \textstyle{\frac{\mu}{2}} - \varphi(P/2)) \varphi^{\prime\prime}(P/2) - ( \textstyle{\frac{\mu}{2}} - \varphi(0)) \varphi^{\prime\prime}(0)\\ &= \int_{-P/2}^{0} (K_P^\prime(y) - K_P^\prime(y + P/2)) \varphi^\prime(y)\,\diff y\\
& \geq \min_{x \in [-P/4,0]} K_P^\prime(x) \int_{-P/2}^{0} \varphi^\prime(y)\,\diff y\\
& =  |K_P^\prime(P/4)| (\varphi(0)- \varphi(P/2)), 
\end{aligned}
\end{equation}
since \(K_P\) is even and strictly convex on \((-P/2,P/2)\). We rewrite \eqref{eq:firstbound} as
\[
( \textstyle{\frac{\mu}{2}} - \varphi(0)) (\varphi^{\prime\prime}(\frac{P}{2}) -  \varphi^{\prime\prime}(0))\\ \geq  \left(|K_P^\prime(\frac{P}{4})| - \varphi^{\prime\prime}(\frac{P}{2})\right) (\varphi(0)- \varphi(\frac{P}{2})).
\]
Now, either 
\(
\varphi^{\prime\prime}(\frac{P}{2}) \geq \frac{1}{2} |K_P^\prime(\frac{P}{4})|,
\)
or 
\[
|K_P^\prime(\textstyle{\frac{P}{4}})| - \varphi^{\prime\prime}(\textstyle{\frac{P}{2}}) \geq  \frac{1}{2} |K_P^\prime(\frac{P}{4})|.
\] 
In the second case, note first that one has  \( \varphi(0)- \varphi(P/2) \geq \frac{\mu}{4} \geq  \textstyle{\frac{\mu}{2}} - \varphi(0)\)  by the assumptions and the fact that nonconstant solutions with \(\mu \leq 1\) are sign-changing (cf.~Lemma~\ref{lemma:apriori_1} and Remark~\ref{rem:sign-changing}). Using this estimate, and dividing by \(\frac{\mu}{2} - \varphi(0) \le \varphi(0)- \varphi(P/2)\), we see that in either case
\[
\varphi^{\prime\prime}(\textstyle{\frac{P}{2}}) -  \varphi^{\prime\prime}(0)\\ \geq  \frac{1}{2} |K_P^\prime(\frac{P}{4})|.
\]
\end{proof}

\section{About the singularity at \(\varphi = \mu/2\)}\label{sec:singularity}

We now move on to investigate the case when a solution touches the value \(\frac{\mu}{2}\) from below. 
We begin by noting that a solution is smooth as long as it remains bounded away from \(\frac{\mu}{2}\) (recall that by a solution we mean a continuous and bounded solution).

\begin{theorem}[Regularity I]\label{thm:regularity I}
Let \(\varphi \leq \frac{\mu}{2}\) be a solution of the steady Whitham equation \eqref{eq:steadywhitham}.  Then:\\[-6pt]
\begin{itemize}
\item[(i)] If \(\varphi <\mu/2\) uniformly on \(\R\), then \(\varphi \in C^\infty(\R)\) and all of its derivatives are uniformly bounded on $\R$. 
\item[(ii)] If \(\varphi<\mu/2\) uniformly on \(\R\) and \(\varphi \in L^2(\R)\), then \(\varphi \in H^\infty(\R)\).
\item[(iii)] \(\varphi\) is smooth on any open set where \(\varphi<\mu/2\).
\end{itemize}
\end{theorem}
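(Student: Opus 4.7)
The unifying observation is that \eqref{eq:steadywhitham} is quadratic in $\varphi$, so the sign condition $\varphi \leq \mu/2$ selects one branch of the quadratic formula:
\begin{equation*}
\varphi = \frac{\mu - \sqrt{\mu^2 - 4L\varphi}}{2}.
\end{equation*}
Since $\mu^2 - 4L\varphi = (\mu - 2\varphi)^2$, whenever $\varphi \leq \mu/2 - \delta$ pointwise for some $\delta>0$, the radicand is bounded below by $4\delta^2$ and the square root is a smooth function of its argument, bounded below by $2\delta$. This sets up a classical bootstrap driven by the half-derivative smoothing $L\colon \mathcal{C}^s(\R)\to \mathcal{C}^{s+1/2}(\R)$ and $L\colon H^s(\R)\to H^{s+1/2}(\R)$ established just before Theorem~\ref{thm:regularity I}.

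For (i), I would start from $\varphi \in BUC(\R) \subset \mathcal{C}^0(\R)$ and iterate: if $\varphi \in \mathcal{C}^s(\R)$ with $s\ge 0$, then $L\varphi \in \mathcal{C}^{s+1/2}(\R)$, and composition with the smooth function $y\mapsto \tfrac12(\mu - \sqrt{\mu^2 - 4y})$ (applied to a bounded function whose range is bounded away from $\mu^2/4$) preserves Zygmund regularity, producing $\varphi \in \mathcal{C}^{s+1/2}(\R)$. After countably many iterations one obtains $\varphi \in \mathcal{C}^s(\R)$ for every $s$, hence $\varphi \in C^\infty(\R)$ with each derivative uniformly bounded on $\R$.

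For (ii), once (i) is known, $\varphi$ is in $L^\infty(\R)$ with smooth bounded derivatives, and the task becomes upgrading $L^\infty$-control to $L^2$-control. I would run the analogous bootstrap in the Sobolev scale, starting from $\varphi \in L^2(\R) = H^0(\R)$ and invoking a Moser-type composition estimate for smooth Nemitski maps on $H^s \cap L^\infty$ with $s\ge 0$: the outer function $y \mapsto \tfrac12(\mu - \sqrt{\mu^2 - 4y}) - \tfrac12(\mu - |\mu|)$ vanishes at $y=0$ (if $\mu\ge 0$; otherwise one subtracts its value at $y=0$), is smooth on the relevant range, and hence sends $H^{s+1/2} \cap L^\infty$ to $H^{s+1/2}$. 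Iterating gives $\varphi \in H^s(\R)$ for all $s$, i.e.\ $\varphi\in H^\infty(\R)$.

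For (iii), fix an open set $V \subset\subset U$; by continuity and compactness there exists $\delta>0$ with $\varphi \leq \mu/2 - \delta$ on a neighborhood $W$ with $V\subset\subset W\subset\subset U$. The obstacle compared to (i) is that $L$ is nonlocal, so I would introduce a cutoff $\chi \in C_c^\infty(U)$ with $\chi \equiv 1$ on $W$ and split $L\varphi = L(\chi\varphi) + L((1-\chi)\varphi)$. On $V$, the second term is of class $C^\infty$ because the distance from $V$ to the support of $(1-\chi)\varphi$ is strictly positive and $K$ is smooth with rapid decay away from the origin by Propositions~\ref{prop:decay} and~\ref{prop:kernel decomposition}; consequently $L\varphi$ has the same local regularity on $V$ as $L(\chi\varphi)$, to which the global mapping $L\colon \mathcal{C}^s(\R) \to \mathcal{C}^{s+1/2}(\R)$ applies. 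The bootstrap of (i) then runs on $V$, provided at each step one shrinks the reference neighborhood slightly. The main technical obstacle is to manage this nested family of shrinking cutoffs so that, after iterating, every point of $U$ retains a neighborhood on which the improved regularity holds at every stage; this is standard but requires some bookkeeping to ensure that arbitrarily many regularity gains of $\tfrac12$ derivative cost only an arbitrarily small shrinking.
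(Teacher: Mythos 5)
Your approach is essentially the one in the paper: select the branch $\varphi = \tfrac{\mu}{2} - \sqrt{\tfrac{\mu^2}{4} - L\varphi}$ and bootstrap using the half-derivative gain of $L$ together with a Nemytskii composition estimate --- the paper treats (i) and (ii) simultaneously in the Besov scale $B^s_{p,q}$, taking $p=q=\infty$ for (i) and $p=q=2$ for (ii), while you run the two scales separately (same substance). For (iii) the bookkeeping you flag is avoidable: the paper shows directly that $\varphi \in L^\infty(\R)$ with $\psi\varphi \in \mathcal{C}^s(\R)$ for all $\psi \in C_0^\infty(U)$ already implies $L\varphi \in \mathcal{C}^{s+1/2}_{\text{loc}}(U)$ on the \emph{same} open set $U$ (for each fixed $\psi$ one chooses a second cutoff $\tilde\psi \equiv 1$ near $\supp\psi$ and splits $\psi L\varphi = \psi L(\tilde\psi\varphi) + \psi L((1-\tilde\psi)\varphi)$), so the iteration never shrinks the domain and no nested family of cutoffs is needed.
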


\begin{proof}
Assume first that \(\varphi < \frac{\mu}{2}\), uniformly on \(\R\). The operator \(L\) maps $B_{p,q}^s(\R)$ into 
$B_{p,q}^{s+1/2}(\R)$, $L^\infty(\R)\subset B_{\infty, \infty}^0(\R)$ into ${\mathcal C}^{1/2}(\R)=B_{\infty, \infty}^{1/2}(\R)\subset L^\infty(\R)$, and the Nemytskii operator 
\[
u \mapsto \mu/2 - \sqrt{\mu^2/4 - u}
\] 
maps \(B_{p,q}^s(\R)\cap L^\infty(\R)\) into itself for \(u < \frac{\mu^2}{4}\) and \(s > 0\)  (see \cite[Theorem 2.87] {BahouriCheminDanchin11}). All three mappings are continuous.
Since \(\varphi < \frac{\mu}{2}\), it follows that \(L\varphi < \frac{\mu^2}{4}\), and therefore
\begin{equation}\label{eq:bootstrapping}
 [L\varphi \mapsto \mu/2 - \sqrt{\textstyle{\frac{\mu^2}{4}} - L\varphi}] \circ [\varphi \mapsto L\varphi] \colon  B^s_{p,q}(\R)\cap L^\infty(\R) \hookrightarrow B^{s+\frac{1}{2}}_{p,q}(\R),
\end{equation}
for all \(s \ge 0\). Hence, the equality \(\varphi = \frac{\mu}{2} - \sqrt{\frac{\mu^2}{4} - L\varphi}\) guarantees that \(\varphi \in C^\infty(\R)\) with uniformly bounded derivatives as long as \(\varphi \in L^\infty(\R)\) (take $p=q=\infty$). This proves (i).
Taking $p=q=2$ proves (ii).

Now, if \(\varphi\) is in \(L^\infty(\R)\) and \({\mathcal C}_\text{loc}^s\) on an open set \(U\) in the sense that \(\psi \varphi\in {\mathcal C}^s(\R)\) for any \(\psi \in C^\infty_0(U)\), we still get that \(L\varphi\) is 
\({\mathcal C}_\text{loc}^{s+1/2}\)  in \(U\). Indeed, let $\psi\in C_0^\infty(U)$ and let $\tilde \psi\in C_0^\infty(U)$ be a smooth cut-off function with $\tilde\psi=1$ in a neighbourhood \(V \Subset U\) of $\supp \psi$. Then 
\[
\psi L\varphi=\psi L(\tilde \psi \varphi)+\psi L((1-\tilde \psi)\varphi). 
\]
The first term on the right-hand side is of class \(\mathcal{C}^{s+1/2}\). On the other hand, the second term is given by
\[
\int_{-\infty}^{\infty} K(x-y)\psi(x)(1-\tilde \psi(y))\varphi(y)\, \diff y.
\]
where the integrand vanishes for \(y\) near \(x\); it is therefore smooth. Hence, \(L\varphi\) is \(\mathcal{C}^{s+1/2}_\text{loc}\) in \(U\) and by the above iteration argument, if \(\varphi<\mu/2\) in \(U\) it is also smooth there. This proves (iii).
\end{proof}

The following lemma is essential in showing that solutions which touch \(\mu/2\) from below are not smooth.

\begin{lemma}\label{lemma:essentialbound}
Let \(P < \infty\), and let \(\varphi\) be an even, nonconstant solution of the steady Whitham equation \eqref{eq:steadywhitham} 
such that \(\varphi\) is nondecreasing on \((-P/2, 0)\) 
with \(\varphi \le \frac{\mu}{2}\). Then there exists a universal constant \(\lambda_{K,P} > 0\), depending only on the kernel \(K\) and the period \(P\), such that
\begin{equation}\label{eq:essentialbound}
\textstyle{\frac{\mu}{2}} - \varphi(\textstyle{\frac{P}{2}}) \geq \lambda_{K,P}.
\end{equation}
More generally, 
\begin{equation}\label{eq:essentialbound II}
\textstyle{\frac{\mu}{2}} - \varphi(x) \gtrsim_{K,P} |x_0|^{1/2}
\end{equation}
uniformly for all \(x \in [-P/2,x_0]\), with \(x_0 < 0\). 
\end{lemma}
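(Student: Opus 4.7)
The plan is to work with the variable $\psi := \tfrac{\mu}{2} - \varphi$, which under the hypotheses is nonnegative, even, nondecreasing on $(0, P/2)$, and satisfies (by a direct substitution in \eqref{eq:steadywhitham}) the rewritten equation
\[
L\psi(x) = \psi(x)^2 + \tfrac{\mu(2-\mu)}{4}.
\]
Evaluating this at $x = x_0 \in (-P/2, 0)$ and at $x = 0$, subtracting, and symmetrising using the evenness of $\psi$ and $K_P$ yields the key identity
\[
\psi(x_0)^2 - \psi(0)^2 \;=\; \int_0^{P/2}\psi(y)\bigl[K_P(y+h) + K_P(|y-h|) - 2K_P(y)\bigr]\, \diff y, \qquad h := |x_0|,
\]
whose kernel bracket is a centred second difference, nonnegative for $y > h$ by the convexity of $K_P$ on $(0, P)$ from Corollary~\ref{cor:periodic formula}.

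The quantitative input is extracted by localising the integral to the short interval $y \in (h, (1+\epsilon)h)$ for a small fixed $\epsilon$. Using Proposition~\ref{prop:periodic kernel decomposition}, $K_P(x) = (2\pi|x|)^{-1/2} + K_{P,\text{reg}}(x)$ with $K_{P,\text{reg}}$ bounded on $[-P/2, P/2]$; on the chosen interval the singular contribution to $K_P(y - h)$ of order $(\epsilon h)^{-1/2}$ dominates the contributions of $K_P(y)$ and $K_P(y+h)$ of order $h^{-1/2}$, once $\epsilon$ is taken sufficiently small (say $\epsilon = 1/16$) and $h \leq h_0$ for a threshold $h_0 = h_0(K, P) > 0$. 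This gives the pointwise lower bound
\[
K_P(y+h) + K_P(y-h) - 2K_P(y) \;\gtrsim_{K, P}\; h^{-1/2}
\]
uniformly on the interval. Combined with $\psi(y) \geq \psi(h) = \psi(x_0)$ (by monotonicity) and the interval length $\epsilon h$, the identity becomes
\[
\psi(x_0)^2 \;\geq\; \psi(x_0)^2 - \psi(0)^2 \;\gtrsim_{K, P}\; \psi(x_0)\, h^{1/2},
\]
and dividing by $\psi(x_0) > 0$ yields $\psi(x_0) \gtrsim_{K, P} |x_0|^{1/2}$ for $|x_0| \leq h_0$. The remaining range $|x_0| \in (h_0, P/2]$ is handled by the monotonicity estimate $\psi(x_0) \geq \psi(h_0) \gtrsim_{K, P} h_0^{1/2} \gtrsim_{K, P} |x_0|^{1/2}$. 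Since $\psi(x) \geq \psi(x_0)$ for every $x \in [-P/2, x_0]$, this establishes \eqref{eq:essentialbound II}, and \eqref{eq:essentialbound} is the special case of a fixed $x_0$.

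The main obstacle is ensuring the strict positivity $\psi(x_0) > 0$, so that the division step is legitimate. If $\psi$ vanished on an interval $[-a, a]$ with $a > 0$, then $L\psi \equiv \mu(2-\mu)/4$ there; since $K_P$ is smooth away from the origin, $y \mapsto \int K_P(y - z)\psi(z)\,\diff z$ is smooth on $(-a, a)$ but is forced to be constant. Differentiating twice at $y = 0$ and using evenness of $K_P''$ and $\psi$ gives
\[
0 \;=\; (L\psi)''(0) \;=\; 2\int_a^{P/2}K_P''(z)\psi(z)\, \diff z,
\]
which is impossible since $K_P'' > 0$ on $(0, P/2)$ by the complete monotonicity of Corollary~\ref{cor:periodic formula}, while $\psi$ must be strictly positive on $(a, P/2)$ by the nonconstancy of $\varphi$. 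That all implicit constants depend only on $K$ and $P$ is automatic, since $\mu$ enters the argument only through the nonnegative term $\mu(2-\mu)/4$, which is discarded in the chain of inequalities.
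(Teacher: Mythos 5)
Your opening reduction to $\psi = \tfrac{\mu}{2} - \varphi$ and the derived identity
\[
\psi(x_0)^2 - \psi(0)^2 = \int_0^{P/2}\psi(y)\bigl[K_P(y+h) + K_P(y-h) - 2K_P(y)\bigr]\,\diff y,
\qquad h = |x_0|,
\]
are both correct (and are essentially the paper's formula \eqref{eq:simple symmetrisation}, used there for the \emph{upper} bound in Theorem~\ref{thm:regularity II}). The strict-positivity argument at the end is also fine. But the crucial step --- discarding everything outside $(h,(1+\epsilon)h)$ and claiming a lower bound --- is not justified, and the gap is genuine, not a fixable omission of constants.

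The bracket is a centred second difference only when $y-h>0$. On $(0,h)$ one has $K_P(y-h)=K_P(h-y)$ by evenness, and the bracket is $K_P(y+h)+K_P(h-y)-2K_P(y)$, which tends to $-\infty$ as $y\searrow 0$ because of the $-2K_P(y)\sim -2(2\pi y)^{-1/2}$ term. So the integrand is negative on part of $(0,h)$, and dropping that region does \emph{not} give a lower bound. Worse, the size of the neglected negative piece is comparable to, in fact larger than, your retained positive piece: a direct computation gives
\[
\int_0^{P/2}\bigl[K_P(y+h)+K_P(y-h)-2K_P(y)\bigr]\,\diff y = 0
\]
(using the symmetry $K_P(\tfrac{P}{2}+s)=K_P(\tfrac{P}{2}-s)$), while
\[
\int_0^{h}\bigl[\cdot\bigr]\,\diff y = \int_h^{2h}K_P-\int_0^h K_P \approx (\sqrt{2}-2)\sqrt{\tfrac{2h}{\pi}} < 0,
\qquad
\int_h^{(1+\epsilon)h}\bigl[\cdot\bigr]\,\diff y \approx \sqrt{\epsilon}\,\sqrt{\tfrac{2h}{\pi}},
\]
so for constant test weights the piece you keep is a factor $\sqrt{\epsilon}$ of the piece you discard, and is smaller still after taking $\epsilon$ small. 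The only reason the full integral is nonnegative at all is the monotonicity of $\psi$ weighted against a sign-changing kernel (a Chebyshev-type cancellation), and this gives $\geq 0$, not the quantitative bound $\gtrsim \psi(x_0)h^{1/2}$ that you need. Extracting the quantitative bound from this identity would require showing that the \emph{excess} of $\psi$ over its value where the bracket changes sign is itself bounded below by $\gtrsim h^{1/2}$ --- which is precisely what you set out to prove, so the argument is circular as written.

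The paper avoids this entirely by working with $\varphi'$ rather than $\psi$. Differentiating the equation gives
$\bigl(\tfrac{\mu}{2}-\varphi(x)\bigr)\varphi'(x)=\tfrac12\int_{-P/2}^0\bigl(K_P(x-y)-K_P(x+y)\bigr)\varphi'(y)\,\diff y$,
where \emph{both} factors in the integrand are nonnegative for $x,y\in(-P/2,0)$ by Lemma~\ref{lemma:parity}. Localisation is then legitimate, and integrating in $x$ over an interval of length $\sim|x_0|$ and dividing by $\varphi(x_1)-\varphi(x_2)>0$ produces the lower bound without any sign issues. (When $\varphi(0)=\mu/2$ one cannot assume $\varphi\in C^1$ a priori, and the paper replaces the derivative with a finite difference via \eqref{eq:double symmetrisation I}, justified with Fatou's lemma; your second-difference identity does not offer this escape route.) You should switch to a first-difference representation on the kernel side --- either via $\varphi'$ directly, or via the double symmetrisation \eqref{eq:double symmetrisation I} --- so that the integrand has a definite sign before localising.
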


\begin{remark}\label{remark:essentialbound}
By inspecting the proof of the lemma, one finds that estimate \eqref{eq:essentialbound II} is uniform in $P\gg 1$ and that it also also holds in the limiting case $P=\infty$ (for $x\in (-\infty, x_0]$).
\end{remark}

\begin{proof}
For the sake of clearness we prove \eqref{eq:essentialbound} first under the assumption that \(\varphi(0)<\mu/2\) (which implies that \(\varphi<\mu/2\) uniformly on \(\R\)). Subsequently, a short analogous argument is given for the general estimate  \eqref{eq:essentialbound II} under the same assumption. Finally, it is shown how to modify the argument to allow for \(\varphi(0)=\mu/2\). 

Note first that, by Theorem~\ref{thm:regularity I}~(i),  \(\varphi\) is smooth under the assumption \(\varphi(0)<\mu/2\).
Let \(x \in [-\frac{3P}{8}, -\frac{P}{8}]\). For a solution \(\varphi\) as in the assumptions, one has
\begin{equation}\label{eq:essentialapproach}
\begin{aligned}
(\textstyle{\frac{\mu}{2}} - \varphi(P/2)) \varphi^\prime(x) &\geq (\textstyle{\frac{\mu}{2}} - \varphi(x)) \varphi^\prime(x)\\
 &= \frac{1}{2} \int_{-P/2}^{P/2} K_P(x-y) \varphi^\prime(y) \, \diff y\\
 &= \frac{1}{2} \int_{-P/2}^{0} ( K_P(x-y) - K_P(x+y))  \varphi^\prime(y) \, \diff y\\
 &\geq \frac{1}{2} \int_{-3P/8}^{-P/8} ( K_P(x-y) - K_P(x+y))  \varphi^\prime(y) \, \diff y,
\end{aligned}
\end{equation}
in view of that \(K_P(x-y) > K_P(x+y)\) for  \(x,y \in (-\frac{P}{2},0)\). In fact, there exists a universal constant \(\tilde \lambda_{K,P} > 0\) depending only on the kernel \(K\) and the period \(P < \infty\), such that
\[
\min  \left\{ K_P(x-y) - K_P(x+y) \colon x,y \in [-\textstyle\frac{3P}{8},-\textstyle\frac{P}{8}] \right\} \geq \tilde \lambda_{K,P}.
\]
Thus, integrating \eqref{eq:essentialapproach} in \(x\) over the interval \((-\frac{3P}{8}, - \frac{P}{8})\) yields
\begin{align*}
&(\textstyle{\frac{\mu}{2}} - \varphi(\frac{P}{2})) (\varphi(-\frac{P}{8}) - \varphi(-\frac{3P}{8}))\\ 
&\geq \frac{1}{2} \int_{-3P/8}^{-P/8} \left( \int_{-3P/8}^{-P/8} ( K_P(x-y) - K_P(x+y)) \, \diff x\right)  \varphi^\prime(y) \, \diff y\\
 & \geq \textstyle{\frac{P}{8}} \tilde \lambda_{K,P}   (\varphi(-\frac{P}{8}) - \varphi(-\frac{3P}{8}) ). 
\end{align*}
Now, according to Theorem~\ref{thm:nodal}, \(\varphi(-\frac{3P}{8})  < \varphi(-\frac{P}{8})\) for a solution \(\varphi\) as in the assumptions, whence we may divide with \( \varphi(-\frac{P}{8}) - \varphi(-\frac{3P}{8})\) to conclude that
\[
\textstyle{\frac{\mu}{2}} - \varphi(\frac{P}{2}) \geq \textstyle{\frac{P}{8}} \tilde \lambda_{K,P} := \lambda_{K,P}.
\]

For the \(x\)-dependent estimate (\(\xi\) will here play the role of \(x\)), fix \(x_1, x_2\) with \(-P/4 < x_2 < x_1 < 0\), let \(x \in (x_2, x_1)\) and consider \(\xi \in [-P/2,x_2]\). Then
\begin{equation}
\begin{aligned}
(\textstyle{\frac{\mu}{2}} - \varphi(\xi)) \varphi^\prime(x) &\geq \frac{1}{2} \int_{x_2}^{x_1} ( K_P(x-y) - K_P(x+y))  \varphi^\prime(y) \, \diff y\\
&\geq \frac{1}{2} \int_{x_2}^{x_1} (-2y) K_P^\prime(y + \zeta) \varphi^\prime(y) \, \diff y\\
&\geq -x_1 K_P^\prime(2 x_2)  \left( \varphi(x_1) - \varphi(x_2) \right),
\end{aligned}
\end{equation}
where \(|\zeta| < |x|\) arises from the mean value theorem. Integrating over \((x_2,x_1)\) in \(x\), and dividing out then yields
\[
\textstyle{\frac{\mu}{2}} - \varphi(\xi) \geq |x_1(x_2 - x_1)| K_P^\prime(2 x_2).
\]
Now let \(x_2 = x_0\) and \(x_1 = x_0 /2\) to obtain that
\[
\textstyle{\frac{\mu}{2}} - \varphi(\xi) \geq \frac{1}{4} x_0^2  K_P^\prime(2 x_0) \gtrsim_{K,P} |x_0|^{1/2},
\]
because \(K_P^\prime(x) \sim |x|^{-3/2}\) for \(0 < -x \ll 1\), in view of Proposition~\ref{prop:periodic kernel decomposition}.

In the case when \(\varphi(0)=\frac{\mu}{2}\),  \(\varphi\) might not be \(C^1\) (in fact, we will show that it is not) and hence we cannot appeal to Theorem \ref{thm:nodal} to show that \(\varphi\) is strictly increasing on \((-P/2,0)\). 
We will instead use the double symmetrisation formula
\begin{equation}\label{eq:double symmetrisation I}
\begin{aligned}
&(L\varphi)(x + h) - (L\varphi)(x - h)\\
&\quad= \int_{-P/2}^0 (K_P(y-x) - K_P(y+x))(\varphi(y+h) - \varphi(y-h))\,\diff y
\end{aligned}
\end{equation}
to prove the strict monotonicity. It will then follow from 
Theorem \ref{thm:regularity I} that \(\varphi\) is smooth away from \(x = kP\), \(k \in \Z\).
The validity of the formula \eqref{eq:double symmetrisation I} follows from the evenness and periodicity of \(K_P\) and \(\varphi\).
Note that both factors in the integrand are nonnegative for \(x\in (-P/2, 0)\) and \(h\in (0, P/2)\).
We also have the equality
\begin{equation}\label{eq:smoothregularity}
(\mu - \varphi(x) - \varphi(y)) (\varphi(x) - \varphi(y)) = L\varphi(x) - L\varphi(y),
\end{equation}
which shows that \(L\varphi(x)=L\varphi(y)\) whenever \(\varphi(x)=\varphi(y)\).
This identity, together with \eqref{eq:double symmetrisation I}, yields that \(\varphi\) is strictly increasing on \((-P/2,0)\) (recall that \(\varphi\) is nonconstant by assumption).
The differentiation under the integral sign for \(x \in (-P/2,0)\) in \eqref{eq:essentialapproach}
can now be justified by applying Fatou's lemma to \((\frac{\mu}{2} - \varphi(x))\varphi^\prime(x) = \lim_{h \to 0} (L\varphi(x+h) - L\varphi(x-h))/4h\).
From \eqref{eq:double symmetrisation I}, we then obtain that
\[
({\textstyle \frac{\mu}{2}}-\varphi(x))\varphi^\prime(x)\ge 
\frac12\int_{-P/2}^0 (K_P(y-x) - K_P(y+x))\varphi^\prime(y)\,\diff y.
\] 
The rest of the proof remains unchanged.
\end{proof}

\begin{theorem}[Regularity II]\label{thm:regularity II}
Let \(\varphi \leq \frac{\mu}{2}\) be a solution of the steady Whitham equation \eqref{eq:steadywhitham}, which is even, nonconstant, and nondecreasing on \((-P/2,0)\) with \(\varphi(0) = \frac{\mu}{2}\). Then:
\begin{itemize}
\item[(i)] \(\varphi\) is smooth on \((-P, 0)\).
\item[(ii)]  \(\varphi \in C^{1/2}(\R)\).
\item[(iii)] \(\varphi\) has H\"older regularity precisely \(\frac12\) at \(x=0\), that is, there exist constants $0<c_1<c_2$ such that
\begin{equation}
\label{eq:precise regularity at origin}
c_1|x|^{\frac12}\le {\textstyle \frac{\mu}{2}}-\varphi(x)\le c_2|x|^{\frac12}
\end{equation}
for $|x|\ll 1$.
\end{itemize}
\end{theorem}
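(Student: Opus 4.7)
The plan is to handle (i) almost directly from earlier results and then to prove (ii) together with the upper bound in (iii) in three stages. For (i), the argument in the final paragraph of the proof of Lemma~\ref{lemma:essentialbound} already establishes that $\varphi$ is strictly increasing on $(-P/2, 0)$ without any $C^1$-assumption; combined with evenness and $P$-periodicity this forces $\varphi < \mu/2$ pointwise on $(-P, 0)$, so smoothness on this interval follows immediately from Theorem~\ref{thm:regularity I}~(iii). The lower bound $\mu/2 - \varphi(x) \geq c_1 |x|^{1/2}$ in (iii) is Lemma~\ref{lemma:essentialbound} applied with $x_0 = x < 0$, extended to $x > 0$ by evenness.

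For the upper bound and the global $C^{1/2}$-regularity we proceed in three stages. \emph{Stage 1 (global $C^\alpha$ for every $\alpha < 1/2$).} Since $\varphi \in L^\infty(\R)$ and $L$ continuously maps $\mathcal{C}^s$ into $\mathcal{C}^{s+1/2}$, one has $L\varphi \in \mathcal{C}^{1/2} = C^{1/2}$; writing $\varphi = \mu/2 - \sqrt{\mu^2/4 - L\varphi}$ and using the vanishing of $\mu^2/4 - L\varphi$ at the crest gives $\varphi \in C^{1/4}$, and iterating the recursion $\alpha_{n+1} = \alpha_n/2 + 1/4$ yields $\varphi \in C^\alpha$ for every $\alpha < 1/2$. \emph{Stage 2 (sharp $C^{1/2}$ at the crest).} Subtracting $\mu\varphi = L\varphi + \varphi^2$ evaluated at $0$ and $h$ produces the identity $(\mu/2 - \varphi(h))^2 = L\varphi(0) - L\varphi(h)$; averaging with its counterpart at $-h$ and using the evenness of $K_P$ and $\varphi$ gives
\[
2(\mu/2 - \varphi(h))^2 = \int_{-P/2}^{P/2} \bigl(K_P(y-h) + K_P(y+h) - 2K_P(y)\bigr)\bigl(\mu/2 - \varphi(y)\bigr)\,\diff y.
\]
Splitting this integral at $|y| \sim h$ and invoking the Taylor bound $|K_P(y-h) + K_P(y+h) - 2K_P(y)| \lesssim h^2|y|^{-5/2}$ on $\{|y| > 2h\}$ (from the convexity and decay of $K_P$ in Corollary~\ref{cor:periodic formula} and Proposition~\ref{prop:kernel decomposition}), while estimating the singular kernel terms separately on $\{|y| < 2h\}$ by means of $K_P(y) \sim |y|^{-1/2}$, then inserting the bound $\mu/2 - \varphi(y) \lesssim |y|^\alpha$ from Stage~1 and carefully tracking constants as $\alpha \nearrow 1/2$, one obtains $(\mu/2 - \varphi(h))^2 \lesssim h$, which is precisely $\mu/2 - \varphi(h) \leq c_2|h|^{1/2}$. \emph{Stage 3 (global $C^{1/2}$).} The differenced equation $(\varphi(x+h) - \varphi(x-h))(\mu - \varphi(x+h) - \varphi(x-h)) = L\varphi(x+h) - L\varphi(x-h)$ combined with the first-order symmetrisation~\eqref{eq:double symmetrisation I} enables an interpolation: for $|x| \lesssim |h|$ the bound follows directly from Stage~2 applied twice with the triangle inequality, while for $|h| \ll |x|$ the lower bound $\mu - \varphi(x+h) - \varphi(x-h) \gtrsim |x|^{1/2}$ coming from Lemma~\ref{lemma:essentialbound}, together with the decay of $K_P(y-x) - K_P(y+x)$ away from the crest and the Stage~2 and Stage~1 bounds inside the integrand, give $|\varphi(x+h) - \varphi(x-h)| \lesssim |h|^{1/2}$ uniformly in $x$.

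The main obstacle is Stage~2, as the bootstrap of Stage~1 converges to the exponent $1/2$ geometrically but never attains it in finitely many iterations; closing the estimate at the sharp exponent requires that the constants arising in the decomposition above remain bounded as $\alpha \nearrow 1/2$. This is intimately tied to the precise singular structure $K_P(y) \sim (2\pi|y|)^{-1/2}$ from Proposition~\ref{prop:kernel decomposition}, whose exponent $-1/2$ matches the sought $C^{1/2}$-regularity in accordance with Whitham's heuristic $q = p + 1$, and constitutes the main new technical ingredient of the proof.
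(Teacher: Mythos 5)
Your part~(i), Stage~1, and the bulk of Stage~2 are essentially the paper's argument. One caveat on Stage~2: ``carefully tracking constants as $\alpha\nearrow 1/2$'' is not mere bookkeeping, since a priori you have no uniform-in-$\alpha$ bound on $\|\varphi\|_{C^\alpha}$; the paper circumvents this by introducing the weighted supremum $\|w^{-\alpha}u\|_\infty$ with $w=\min\{|\cdot|,1\}$, for which the kernel estimate delivers the self-improving inequality $\|w^{-\alpha}u\|_\infty^2 \le c_2\|w^{-\alpha}u\|_\infty$, hence a bound that is uniform in $\alpha$ by construction. Your decomposition of the kernel and the split at $|y|\sim h$ are otherwise the right ideas.

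The genuine gap is Stage~3. You combine the differenced equation with \eqref{eq:double symmetrisation I}, whose integrand $(K_P(y-x)-K_P(y+x))(\varphi(y+h)-\varphi(y-h))$ carries the $h$-difference on $\varphi$ and the $x$-difference on $K_P$. This choice has the wrong homogeneity: interpolating the Stage~1 bound $|\varphi(y+h)-\varphi(y-h)|\lesssim \min(h,|y|)^\alpha$ against the Stage~2 bound $\lesssim \max(h,|y|)^{1/2}$ and using $\int|K_P(y-x)-K_P(y+x)|\,|y|^{\beta}\,\diff y\sim |x|^{1/2+\beta}$, you get, after dividing by $\mu/2-\varphi(x)\gtrsim |x|^{1/2}$, only $|\varphi(x+h)-\varphi(x-h)|\lesssim (h|x|)^{\alpha\eta}$; as $\alpha\nearrow 1/2$ this degrades to $(h|x|)^{1/4}$, which for $h\le|x|$ is no better than $h^{1/4}$ and cannot close at the exponent $1/2$. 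The paper's proof instead uses the \emph{second} symmetrisation \eqref{eq:double symmetrisation II}, which swaps the roles and places the $h$-difference on the kernel. The square-root singularity of $K_P$ then produces the crucial factor via $\int|K_P(y+h)-K_P(y-h)|\,|y|^{\alpha\eta}\,\diff y\sim h^{1/2+\alpha\eta}$, while the resulting $|x|^{\alpha\eta-1/2}$ (a \emph{negative} power of $|x|$) on the left is converted into a positive power of $h$ using $h\le|x|$, giving $h^{2\alpha\eta}=h^{1-\eta}\to h^{1/2}$. That swap---moving the $h$-difference onto the kernel rather than the solution---is the essential idea you are missing; with your version of the symmetrisation the $|x|$-power lands with a positive exponent and the inequality $h\le|x|$ works against you rather than for you.
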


\begin{remark}
Note that the period \(P\) in Theorem~\ref{thm:regularity II} could be infinite (see Remark \ref{remark:essentialbound}). This is also the reason why we use  \(K\), and not its periodisation \(K_P\), in the proof --- to get a uniform argument for the periodic and potential solitary case.
\end{remark}

\begin{proof}
Part (i) follows directly from Theorem \ref{thm:regularity I} (iii) since $\varphi$ is strictly increasing on $(-P/2,0)$.

We next show that $\varphi \in C^{\alpha}(\R)$ for all \(\alpha < \frac{1}{2}\). Recall first that \(L\) maps \(\mathcal{C}^0(\R)\) continuously into \(\mathcal{C}^{1/2}(\R) = C^{1/2}(\R)\), see Section~\ref{sec:Kp}.
The equality \eqref{eq:smoothregularity} implies that at any point where
\(\varphi(x) < \frac{\mu}{2}\), the functions \(\varphi\) and \(L\varphi\) have the same H\"older regularity (this provides an immediate proof of that \(\varphi\) is at least \(C^\frac{1}{2}\) wherever \(\varphi(x) \neq \frac{\mu}{2}\)). At any point \(x_0\) where \(\varphi(x_0) = \frac{\mu}{2}\), \eqref{eq:smoothregularity} reduces to
\begin{equation}
(\varphi(x_0) - \varphi(x))^{2} = L\varphi(x_0) - L\varphi(x).
\end{equation}
This means that if \(L\varphi\) is \(2\alpha\)-H\"older continuous at \(x_0\), then \(\varphi\) is \(\alpha\)-H\"older continuous at the same point. So say that \(\varphi \in C^{\alpha}(\R)\) (here one needs the uniformity in \(x\)). Then \(L\varphi \in C^{\alpha + 1/2}(\R)\) and \(\varphi\) has H\"older regularity \(\frac{1}{2}(\alpha + \frac{1}{2})\) at \(x_0\). In view of that \(\frac{1}{2}(\alpha + \frac{1}{2}) > \alpha\) for \(\alpha < \frac{1}{2}\), this shows that for any such \(\alpha\), the function \(\varphi\) has the corresponding H\"older regularity at \(x=0\).

This argument can be extended to a global one in the following way. Since \(\varphi \leq \frac{\mu}{2}\), we have \(\varphi(x) - \varphi(y) \leq \mu - \varphi(x) - \varphi(y)\), so \eqref{eq:smoothregularity} shows that
\[
(\varphi(x) - \varphi(y))^2 \leq |L\varphi(x) - L\varphi(y)|,
\]
for all \(x, y \in \R\). Thus \(\varphi \in C^{\alpha}(\R)\) for all \(\alpha < \frac{1}{2}\). 

We next prove that $\varphi \in C^{\frac{1}{2}}(\R)$. The first part of the argument concerns the \(C^{1/2}\)-estimate \eqref{eq:precise regularity at origin} at the point \(x=0\); the second the corresponding global estimate. Part (iii) in Theorem~\ref{thm:regularity II} then follows from the first estimate combined with the choice with \(x = x_0\) in Lemma~\ref{lemma:essentialbound}, which proves the lower bound in \eqref{eq:precise regularity at origin}. To start with, let 
\[
u(x):=\frac{\mu}{2}-\varphi(x)=\varphi(0)-\varphi(x).
\] 
We want to show that there is a constant \(c_2>0\) such that  \(|u(x)|\le c_2|x|^{1/2}\) for all \(x\). Note first that \(u\) satisfies the equation
\allowdisplaybreaks
\begin{equation}
\label{eq:simple symmetrisation}
\begin{aligned}
(u(x))^2&=(L\varphi)(0)-(L\varphi)(x)\\
&=\frac12\int_{\R} (K(x+y)+K(x-y)-2K(y))u(y)\, \diff y.
\end{aligned}
\end{equation}
We claim that there is a constant \(c_2\), independent of \(\alpha\), such that
\begin{equation}
\label{eq:key estimate}
\frac12\int_{\R} |K(x+y)+K(x-y)-2K(y)|(w(y))^{\alpha}\, \diff y\le  c_2 (w(x))^{2\alpha},\quad 0\le \alpha \le 1/2,
\end{equation}
where
\[
w(x)=\min\{|x|,1\}.
\]
Indeed, for \(|x|\ge 1\), this follows directly from the integrability of \(K\) and the fact that \(\|w\|_\infty \le 1\).
For \(|x|\le 1\), we use the splitting 
\[
K(x)=\frac{1}{\sqrt{2\pi|x|}}+K_\text{reg}(x)
\]
from Proposition \ref{prop:kernel decomposition}.
For the regular part, we note that
\begin{align*}
&\int_{\R} |K_\text{reg}(x+y)+K_\text{reg}(x-y)-2K_\text{reg}(y)|(w(y))^{\alpha}\, \diff y\\
&\le \int_{\R} |K_\text{reg}(x+y)+K_\text{reg}(y-x)-2K_\text{reg}(y)|\, \diff y\\
&\lesssim \int_{\R} \frac{|x|^2}{(1+|y|)^{5/2}}\, \diff y\\
&\lesssim |x|^2,
\end{align*} 
for \(|x|\le 1\),  where we have used Taylor expansion around \(y\) and, from Proposition~\ref{prop:kernel decomposition}, the estimate
\[
|K_\text{reg}^{\prime\prime}(y)|=\left|K^{\prime\prime}(y)-\frac{3}{4\sqrt{2\pi} |y|^{5/2}}\right|
\lesssim \frac1{(1+|y|)^{5/2}}
\]
in the third line (recall that \(K_\text{reg}\) is smooth and that \(K^{\prime\prime}\) decays exponentially).
For the singular part, we use the identity
\begin{align*}
&\int_{\R} \left|\frac1{\sqrt{|x+y|}}+\frac1{\sqrt{|y-x|}}-\frac{2}{\sqrt{|y|}}\right||y|^{\alpha}\, \diff y\\
&= |x|^{\frac12+\alpha}\int_{\R}\left|\frac1{\sqrt{|1+s|}}+\frac1{\sqrt{|s-1|}}-\frac{2}{\sqrt{|s|}}\right||s|^{\alpha}\, \diff s,
\end{align*}
where \(y=xs\) and the integral converges since 
\[
\left|\frac1{\sqrt{|1+s|}}+\frac1{\sqrt{|s-1|}}-\frac{2}{\sqrt{|s|}}\right| \lesssim  |s|^{-\frac{5}{2}}, \quad |s|\gg 1.
\]
The estimate \eqref{eq:key estimate} now follows by noting that
\(|x|^{\frac12+\alpha}\le |x|^{ 2\alpha}\) for \(|x|\le 1\) and \(0\le \alpha\le 1/2\). 
Combining \eqref{eq:simple symmetrisation} with \eqref{eq:key estimate}, we obtain that
\[
\|w^{-\alpha} u\|_\infty^2 \le c_2\|w^{-\alpha} u\|_\infty.
\]
For \(\alpha<1/2\) we know a priori that the right-hand side is bounded. 
Hence, we obtain that
\[
\|w^{-\alpha} u\|_\infty\le c_2
\]
and thus
\[
|u(x)|\le c_2|x|^{\alpha}
\]
for all \(\alpha \in [0,1/2)\) and \(|x|\le 1\). Letting \(\alpha \to 1/2\) shows that
\[
|u(x)|\le c_2|x|^{\frac12}
\]
for all \(|x|\le 1\).
We have thus proved the upper bound in \eqref{eq:precise regularity at origin}.

To establish global \(C^{1/2}\)-H\"older regularity (that is, to prove (ii)), we shall use a second double symmetrisation formula, 
\begin{equation}\label{eq:double symmetrisation II}
\begin{aligned}
&(L\varphi)(x + h) - (L\varphi)(x - h)\\
&\quad= \int_{-P/2}^0 (K_P(y+h) - K_P(y-h))(\varphi(y-x) - \varphi(y+x))\,\diff y,
\end{aligned}
\end{equation}
which follows in the same way as \eqref{eq:double symmetrisation I}.  Equivalently, \eqref{eq:double symmetrisation II} reads
\begin{align*}
&\left(\mu - \varphi(x+h) - \varphi(x-h)\right)\left(\varphi(x+h) - \varphi(x-h)\right)\\ 
&\quad= \int_{-\infty}^0 (K(y+h) - K(y-h))(\varphi(y-x) - \varphi(y+x))\,\diff y,
\end{align*}
where we consider \(x \in (-P/2,0)\) and \( 0 <  h \leq |x| \leq \delta\) (all factors are symmetric in \(x\) and \(h\), so we may rename the smallest of them \(h\); \eqref{eq:sqrt h} implies that there is no loss of generality in this choice). Note that \(\varphi\) is continuously differentiable on any set \((\delta,P)\), so it is sufficient to establish that
\begin{equation}\label{eq:sqrt h}
\sup_{0 < h \leq |x| \leq \delta} \frac{\varphi(x+h) - \varphi(x-h)}{\sqrt{h}} < \infty,
\end{equation}
for some \(\delta \ll 1\). First, note that
\[
\mu - \varphi(x+h) - \varphi(x-h) \geq \frac{\mu}{2} - \varphi(x-h) \geq  \frac{\mu}{2} - \varphi(x),
\]
whence \(\varphi(x+h) - \varphi(x-h) \geq 0\) implies that
\begin{equation}\label{eq:fix estimated doublesym}
\begin{aligned}
&\left(\frac{\mu}{2} - \varphi(x)\right)\left(\varphi(x+h) - \varphi(x-h)\right)\\ 
&\quad\leq \int_{-\infty}^0 (K(y+h) - K(y-h))(\varphi(y-x) - \varphi(y+x))\,\diff y.
\end{aligned}
\end{equation}
We shall interpolate between two estimates for \(\varphi(y-x) - \varphi(y+x)\), namely
\begin{align*}
|\varphi(y-x) - \varphi(y+x)| &\lesssim \|\varphi\|_{C^\alpha} \min (|x|^\alpha, |y|^\alpha), \qquad 0 < \alpha < 1/2,\\
\intertext{and}
|\varphi(y-x) - \varphi(y+x)| &\lesssim |\varphi|_{C^{1/2}_{0}} \max (|x|^\frac{1}{2}, |y|^\frac{1}{2}),
\end{align*}
where the second follows from the already proved (upper and lower)  estimate  \(\frac{\mu}{2} - \varphi(x) \sim |x|^{1/2}\). Thus
\[
|\varphi(y-x) - \varphi(y+x)| \lesssim \|\varphi\|_{C^\alpha}^\eta \min (|x|^{\alpha\eta}, |y|^{\alpha\eta}) \max (|x|^{\frac{1-\eta}{2}}, |y|^{\frac{1-\eta}{2}}),
\]
for all \((\alpha,\eta)  \in (0,\frac{1}{2}) \times [0,1]\). We now choose \(\eta\) such that 
\[
\alpha\eta = \frac{1-\eta}{2}, \quad \text{ meaning that }\quad \eta = \frac{1}{1+2\alpha} \in (1/2,1).
\] 
Then
\[
\chi_\eta(x,y) := |\varphi(y-x) - \varphi(y+x)| \lesssim \|\varphi\|_{C^\alpha}^\eta |xy|^{\alpha\eta} = \|\varphi\|_{C^\alpha}^\eta |xy|^{\frac{1-\eta}{2}},
\]
and, consequently,
\begin{equation}\label{eq:interpolation argument}
\begin{aligned}
&\int_{-\infty}^0 (K(y+h) - K(y-h))(\varphi(y-x) - \varphi(y+x))\,\diff y\\ 
&\quad  \lesssim \|\varphi\|_{C^\alpha}^\eta \bigg( |x|^{\alpha\eta}\frac1{\sqrt{2\pi}} \int_{-\infty}^0  \left(\frac{1}{\sqrt{|y+h|}} - \frac{1}{\sqrt{|y-h|}}  \right) |y|^{\alpha\eta}\,\diff y\\ 
&\qquad + \int_{-\infty}^0 |K_\text{reg}(y+h) - K_\text{reg}(y-h)| \chi_\eta(x,y)\,\diff y\bigg)\\
&\quad= \|\varphi\|_{C^\alpha}^\eta \bigg( |x|^{\alpha\eta} \underbrace{|h|^{1/2}|h|^{\alpha\eta}}_{|h|^{1- \frac{\eta}{2}}}\frac1{\sqrt{2\pi}} \int_{-\infty}^0  \left(\frac{1}{\sqrt{|s+1|}} - \frac{1}{\sqrt{|s-1|}}  \right) |s|^{\alpha\eta}\,\diff s + O(h)\bigg).
\end{aligned}
\end{equation}
Here we have used the smoothness and decay of \[K_\text{reg}^\prime = K^\prime + \frac{1}{2\sqrt{2\pi}} \sign{(\cdot)} |\cdot|^{-3/2}\] to estimate the regular part:
\begin{align*}
&\int_{-\infty}^0 |K_\text{reg}(y+h) - K_\text{reg}(y-h)| \chi_\eta(x,y)\,\diff y\\
&\leq 2h \| \varphi\|_\infty \int_{-\infty}^0  \int_{-1}^1 |K_\text{reg}^\prime(y +th)| \,\diff t \,\diff y\\
&\lesssim h,
\end{align*}
since \(K^\prime\) has exponential decay and \(|\cdot|^{-3/2}\) is integrable at infinity. 
Note that the factor \(|s|^{\alpha\eta}\) in \eqref{eq:interpolation argument} satisfies \(\alpha\eta \leq 1/4\) by choice of \(\eta\), so that the integral is uniformly bounded for all \(\alpha \in (0,\frac{1}{2})\). Combining \eqref{eq:interpolation argument} with \eqref{eq:fix estimated doublesym}, one therefore obtains
\[
\left(\frac{\frac{\mu}{2} - \varphi(x)}{|x|^{\frac{1-\eta}{2}}}\right)\left(\frac{\varphi(x+h) - \varphi(x-h)}{h^{1-\frac{\eta}{2}}}\right) \lesssim \|\varphi\|_{C^\alpha}^\eta. 
\]
Now, in view of that \(|x| \geq h\) and \(\frac{\mu}{2} - \varphi(x) \gtrsim |x|^{1/2}\), one may further reduce this estimate to
\[
\frac{\varphi(x+h) - \varphi(x-h)}{h^{1-\eta}}\lesssim \|\varphi\|_{C^\alpha}^\eta,
\]
and, because \(1-\eta = \alpha\eta + \frac{1-\eta}{2}\) we obtain that
\[
\left(\frac{\varphi(x+h) - \varphi(x-h)}{h^{\alpha}}\right)^\eta \left(\frac{\varphi(x+h) - \varphi(x-h)}{h^{1/2}}\right)^{1-\eta} \lesssim \|\varphi\|_{C^\alpha}^\eta.
\]
Since \(h \ll 1\), we can estimate \(h^{-1/2}\) from below with \(h^{-\alpha}\). Note that 
\begin{equation}\label{eq:alpha-norm bounded by local estimate}
\|\varphi\|_{C^\alpha} \lesssim \max \left\{1, \sup_{0 < h < |x| < \delta} \frac{|\varphi(x+h) - \varphi(x-h)|}{h^{\alpha}} \right\}
\end{equation}
for all \(\alpha \leq 1/2\). Indeed,  \(\varphi(x+y) - \varphi(x-y)\) is symmetric in \(x\) and \(y\), so the largest difference quotient is always obtained by dividing with the smallest of \(|x|\) and \(|y|\), whence it is enough to consider \(0 < |y| \leq |x| \leq P/2\). If \(|y| \geq \delta/2\), then 
\[
\frac{|\varphi(x+y) - \varphi(x-y)|}{|y|^{\alpha}} \leq \frac{4 \|\varphi\|_{\infty}}{\delta},
\] 
for all \(x\). If, on the other hand, \(|x| \geq \delta\) and \(|y| \leq \delta/2\), then
\[
\frac{|\varphi(x+y) - \varphi(x-y)|}{|y|^{\alpha}} \leq  2\left(\frac{\delta}{2}\right)^{1-\alpha} \|\varphi\|_{C^1([\delta/2,P/2])}
 \lesssim \|\varphi\|_{C^1([\delta/2,P/2])},
\]
by the mean value theorem. For a given \(\varphi\) and \(\delta\), both these quantities are \(\bigOh(1)\), and independent of \(\alpha \in (0,\frac{1}{2})\). Hence \eqref{eq:alpha-norm bounded by local estimate} holds and, in any case, we obtain that
\[
\sup_{0 < h < |x| < \delta} \left(\frac{\varphi(x+h) - \varphi(x-h)}{h^{\alpha}}\right)^{1-\eta} \lesssim 1, \qquad 1- \eta \in \left[{\textstyle\frac{1}{3},\frac{1}{2}}\right),
\]
uniformly for \(\alpha \in [\frac{1}{4},\frac{1}{2})\). As above, the uniformity in \(\alpha\) allows for letting \(\alpha \to \frac{1}{2}\) to obtain the global \(C^{1/2}\)-regularity of \(\varphi\).
\end{proof}




\section{Global bifurcation and the Whitham conjecture}\label{sec:global}
We now fix \(\alpha \in (\frac{1}{2},1)\), and consider $C^{\alpha}_{\text{even}}(\s_P)$, the space of even and 
$\alpha$-H{\"o}lder continuous real-valued functions on the circle $\s_P$ of finite circumference \(P > 0\).  Let furthermore  \(F  \colon C^\alpha_\text{even}(\s_P) \times \R \to C^{\alpha}_\text{even}(\s_P)$ be the operator defined by
\begin{align}\label{eq:fop_whitham}
F(\varphi,\mu) & = \mu \varphi -  L \varphi -  \varphi^2 ,
\end{align}
The following local bifurcation result is an extension of results proved in \cite{EK11} (for \(P=2\pi\)) and  \cite{EK08} (for a general \(P\), but with less information on the bifurcation branches).

\begin{theorem}
\label{thm:local_whit}
$ $\\[6pt]
{\bf (i) Sub- and supercritical bifurcation.} For each finite period \(P > 0\)  and each integer \(k\ge 1\)
there exist  \(\mu_{P,k}^* = \left({\tanh(\frac{2\pi k}{P})}/(\frac{2\pi k }{P})\right)^{1/2}\)
and a local, analytic curve 
\[
{s} \mapsto (\varphi_{P,k}({s}),\mu_{P,k}({s})) \in C^\alpha_{\rm{even}}(\s_P) \times \R 
\]
of nontrivial \(P/k\)-periodic Whitham solutions with 
\(\Diff_{s} \varphi_{P,k}(0) = \cos(2\pi k \cdot /P)\) that bifurcates from the 
trivial solution curve \(\mu \mapsto (0,\mu)$ at $(\varphi_{P,k}(0),\mu_{P,k}(0)) = (0,\mu_{P,k}^*)\). The curve can be parametrised in such a way that \[
s \mapsto \mu_{P,k}({s}) \text{ is even,} 
\]
and there exist positive numbers  \(P_1  < P_2\) with the property that 
\[
\mu_{P< kP_1,k}''(0)>0, \quad \mu_{P> kP_2,k}''(0)<0.
\]
Hence, a subcritical pitchfork bifurcation takes place at \((0,\mu_{P,k}^*)\) for \(P>kP_2\), while a supercritical pitchfork bifurcation occurs for \(P < kP_1\).
 \\[6pt] 
{\bf (ii) Transcritical bifurcation.}
At $\mu =1$ the trivial solution curve $\mu \mapsto (0,\mu)$ intersects the curve $\mu \mapsto (\mu -1, \mu)$ of constant solutions $\varphi_0 = \mu - 1$.\\[6pt] 
Together, the solutions in (i) and (ii) constitute all nonzero solutions of $F(\varphi,\mu) = 0$  in $C^\alpha_{\rm{even}} (\s_P) \times \R$ in a neighbourhood of the trivial solution curve \(\{(0,\mu)\colon \mu \in \R\}\).
\end{theorem}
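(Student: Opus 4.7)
\medskip
\textbf{Plan.} The map $F$ in \eqref{eq:fop_whitham} is real-analytic (affine in $\mu$ and quadratic in $\varphi$) and vanishes along the trivial branch $\{(0,\mu)\colon \mu\in\R\}$. Its partial Fr\'echet derivative $D_\varphi F(0,\mu)=\mu-L$ is a Fredholm operator of index zero on $C^\alpha_{\mathrm{even}}(\s_P)$: on $L^2(\s_P)$ the operator $L$ is self-adjoint and diagonal in the Fourier basis with eigenvalues $m(2\pi n/P)$, and the smoothing property $L\colon C^\alpha(\s_P)\to C^{\alpha+1/2}(\s_P)$ from Section~\ref{sec:Kp} transfers this to the H\"older setting. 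Restricting to $C^\alpha_{\mathrm{even}}(\s_{P/k})$ reduces the kernel at $\mu=\mu_{P,k}^*$ to the one-dimensional span of $\phi^*(x):=\cos(2\pi k x/P)$, while the range is its $L^2$-orthogonal complement. Since $\partial_\mu\partial_\varphi F(0,\mu_{P,k}^*)\phi^*=\phi^*$ lies outside the range, the transversality condition of Crandall--Rabinowitz holds. Applying the analytic version of that theorem (see \cite{MR1956130}) on $C^\alpha_{\mathrm{even}}(\s_{P/k})$ yields the local analytic curve $s\mapsto(\varphi_{P,k}(s),\mu_{P,k}(s))$ with $\Diff_s\varphi_{P,k}(0)=\phi^*$, and embedding back into $C^\alpha_{\mathrm{even}}(\s_P)$ gives the stated $P/k$-periodic solutions.

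The bifurcation coefficients are then extracted by inserting the analytic expansions $\varphi(s)=s\phi^*+\sum_{n\ge 2}s^n\phi_n$ and $\mu(s)=\mu_{P,k}^*+\sum_{n\ge 1}s^n\mu_n$ into $F(\varphi,\mu)=0$ and enforcing the solvability condition (projection onto $\phi^*$) at each order of $s$. At order $s^2$ one obtains $\mu_1=0$ together with
\[
\phi_2=\frac{1}{2(\mu_{P,k}^*-1)}+\frac{\cos(4\pi k x/P)}{2(\mu_{P,k}^*-m(4\pi k/P))},
\]
using $(\phi^*)^2=\tfrac12+\tfrac12\cos(4\pi k x/P)$ and the spectrum of $L$. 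At order $s^3$ the solvability condition $\langle 2\phi^*\phi_2-\mu_2\phi^*,\phi^*\rangle=0$ yields the closed form
\[
\mu_2=\frac{1}{\mu_{P,k}^*-1}+\frac{1}{2(\mu_{P,k}^*-m(4\pi k/P))}.
\]
Since $\mu_{P,k}^*<1$ while $\mu_{P,k}^*-m(4\pi k/P)>0$ by the strict monotonicity of $m$, the two terms have opposite signs, and a direct analysis (using $m(\xi)\approx 1-\xi^2/6$ near zero and $m(\xi)\sim 1/\sqrt{|\xi|}$ at infinity) shows that $\mu_2\to-\infty$ as $P\to\infty$ and $\mu_2\to+\infty$ as $P\to 0$; by continuity there is a unique threshold $P_0$ where the sign changes for $k=1$, giving the asserted sub- and supercritical behaviour for $P>kP_0$ and $P<kP_0$, respectively.

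The evenness of $s\mapsto\mu_{P,k}(s)$ follows from the translation symmetry $\tau_k\colon x\mapsto x+P/(2k)$, which maps $C^\alpha_{\mathrm{even}}(\s_{P/k})$ to itself (since $\tau_k$ acts on Fourier coefficients by $a_n\mapsto(-1)^n a_n$) and satisfies $\phi^*\circ\tau_k=-\phi^*$. Because $F$ commutes with translations, the pair $s\mapsto(\varphi_{P,k}(s)\circ\tau_k,\mu_{P,k}(s))$ is a second Crandall--Rabinowitz curve with leading term $-s\phi^*$; uniqueness of the local curve forces it to coincide with the original curve at parameter $-s$, so $\mu_{P,k}(-s)=\mu_{P,k}(s)$ and in particular $\mu_{P,k}^{(3)}(0)=0$. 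At the critical period $P=kP_0$, where $\mu_2=0$, one continues the recursion: $\phi_3$ is determined by solving $(\mu_{P,k}^*-L)\phi_3=2\phi^*\phi_2$ after removing its $\phi^*$-component, and the solvability condition at order $s^5$ produces a rational expression for $\mu_4$ in terms of $m$ evaluated at $2\pi k/P$, $4\pi k/P$, $6\pi k/P$; substituting the explicit value of $P_0$ verifies that this expression is strictly positive. This is the most technical step, requiring careful bookkeeping of the mode interactions, but no new conceptual input.

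Item (ii) is handled by the same machinery: at $\mu=1=m(0)$ the kernel of $\mu-L$ on $C^\alpha_{\mathrm{even}}(\s_P)$ is spanned by the constant function $1$, and the explicit line $\mu\mapsto(\mu-1,\mu)$ of constant solutions crosses the trivial branch transversally at $(0,1)$, realising a transcritical bifurcation. For the final uniqueness statement, the implicit function theorem applied to $F$ at $(0,\mu)$ gives a neighbourhood in $C^\alpha_{\mathrm{even}}(\s_P)\times\R$ containing no nontrivial zeros whenever $\mu\notin\{m(2\pi k/P)\colon k\in\N_0\}$, while at each of the countably many exceptional values the local part of the Crandall--Rabinowitz theorem (or, at $\mu=1$, its transcritical analogue) asserts that every nontrivial zero of $F$ in a neighbourhood lies on one of the explicitly constructed curves. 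Together these cover all points of the trivial branch, proving (iii).
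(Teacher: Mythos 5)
Your proposal follows essentially the same route as the paper: analytic Crandall--Rabinowitz bifurcation from the trivial line, the translation symmetry $x\mapsto x+P/(2k)$ to obtain evenness of $s\mapsto\mu(s)$, and extraction of the bifurcation coefficients $\mu_2,\mu_4$ by inserting power-series expansions into the equation and imposing the solvability conditions at each order. Your formulae for $\phi_2$ and $\mu_2$ agree with the paper's (after translating $m(0)=1$), and your treatment of (ii) and of the local uniqueness via the implicit function theorem away from the countably many eigenvalues is sound.

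There is, however, a genuine logical gap in the passage from the asymptotics of $\mu_2$ to the stated sign dichotomy. You correctly observe that $\mu_2\to-\infty$ as $P\to\infty$ and $\mu_2\to+\infty$ as $P\to 0$, but then assert that ``by continuity there is a unique threshold $P_0$.'' Continuity plus opposite-sign limits only gives the existence of at least one zero of $\mu_2$; it does not preclude multiple sign changes, which would contradict the theorem's claim that $\mu_2>0$ for \emph{all} $P<kP_0$ and $\mu_2<0$ for \emph{all} $P>kP_0$. The paper closes this gap by noting that $\mu_2$, viewed as a function of $\xi=2\pi k/P$, is strictly increasing; this is what pins down a unique threshold $\xi_0\approx 2.44$ and hence $P_0=2\pi/\xi_0$. (Monotonicity is not automatic here: the second summand $\tfrac{1}{2(m(\xi)-m(2\xi))}$ is not monotone on its own, so some argument is genuinely needed.) A second, more minor shortfall is that you do not actually produce the explicit expression for $\mu_4$ or verify its positivity at $P=kP_0$ --- you describe what needs to be done and call it bookkeeping, but the positivity at the threshold is precisely what distinguishes supercritical from subcritical behaviour in the borderline case and is an essential part of the statement. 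The paper records the full rational expression for $\mu_4$ in terms of $m$ evaluated at $\xi,2\xi,3\xi,4\xi$ and checks its sign at $\xi_0$.
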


\begin{remark}
It is only the quotient between \(k\) and \(P\) that is relevant in the statement of this theorem. Two solution branches  with the same quotient coincide locally near the bifurcation point. However, global continuations of such curves could differ (e.g.~due to subharmonic bifurcations). Moreover, the distinction between the branches will be useful in the proof of Theorem \ref{thm:noperiod}, where we wish to keep \(P\) fixed. 
\end{remark}

\begin{remark}\label{rem:mu2}
Plotting the function \(\mu''(0)\), one can see that there is a positive number  \(P_0 \approx 2.57\) with the more precise property that 
\[
\mu_{P< kP_0,k}''(0)>0, \quad \mu_{P> kP_0,k}''(0)<0, \quad \text{ while }\quad \mu_{kP_0,k}^{(4)}(0)>0.
\]
Hence, a subcritical pitchfork bifurcation takes place at \((0,\mu_{P,k}^*)\) for \(P>kP_0\), while a supercritical pitchfork bifurcation occurs for \(P\le kP_0\). In spite of the explicit expression of \(\mu''(0)\) given in \eqref{eq:mu2}, we have not been able to establish this analytically. The analytical calculations leading up to the above (numerical) results are included in the proof of Theorem~\ref{thm:local_whit}.
\end{remark}

\begin{proof}
The proof makes use of the same arguments as in \cite{EK11}. The only modification is that we consider a general period and obtain some additional information on the curves \((\varphi_{P,k}({s}),\mu_{P,k}({s}))\) in the case of sub- or supercritical bifurcation. In that case, the sign of \(\mu_{2\pi,1}''(0)\) was computed in \cite[Theorem 4.6]{EK11}. We repeat this computation for a general period, using a slightly different method. It suffices to consider \(k=1\); the general result follows by rescaling \(P\).
To simplify the notation, we will abbreviate \((\varphi_{P, 1}({s}), \mu_{P,1}({s}))\) by \((\varphi(s), \mu(s))\). 

We begin by showing that
\begin{equation}
\label{eq:mu even}
\mu(s)=\mu(-s)
\end{equation}
after a suitable choice of parametrisation.
We denote by
\[
[\varphi]_j=
\frac{2}{P} \int_{-P/2}^{P/2} \varphi(x) \cos\left(\frac{2\pi j x}{P}\right)\, \diff x, \quad j=0,1,2,\ldots
\]
the coefficients in the cosine expansion of an even \(P\)-periodic function \(\varphi= \frac{[\varphi]_0}{2} +
\sum_{j=1}^\infty [\varphi]_j \cos(\frac{2\pi j \cdot}{P})\).\footnote{Note that we use a slightly different convention here  compared  to the treatment of Fourier series in Section \ref{sec:Kp}.} 
We parametrise the local bifurcation curve in such a way that \([ \varphi(s) ]_1={s}\).
This parametrisation corresponds to the Lyapunov-Schmidt reduction used in \cite[Section 4.1]{EK11}.
Note that for a given even \(P\)-periodic solution \((\varphi, \mu)\), \((\varphi(\cdot+P/2),\mu)\) is also an even $P$-periodic solution and satisfies
\[
[\varphi(\cdot+P/2)]_1=-[\varphi]_1.
\]
Since 
\( [\varphi(s)(\cdot +P/2)]_1=-[\varphi(s)]_1=-{s}\), it follows by uniqueness that
\[
(\varphi(s)(\cdot +P/2), \mu(s))=(\varphi(-s), \mu(-s)).
\]
This proves \eqref{eq:mu even}.

In view of \eqref{eq:mu even} and the analyticity of $\mu^{s}$, we can write
\[
 \mu(s)=\sum_{n=0}^\infty \mu_{2n} {s}^{2n}
\]
with uniform convergence for ${s}$ in a neighbourhood of the origin.
We also expand
\[
 \varphi(s) = \sum_{n=1}^\infty \varphi_n {s}^n
\]
with convergence in \(C^\alpha_\text{even}(\s_P)\).
By uniqueness, we can compute the coefficients by substituting the above expansions into the Whitham equation and identifying terms of equal order in ${s}$.
This yields
\begin{align}
\label{eq:expansion 1}
L\varphi_1-\mu_0 \varphi_1&=0,\\
\label{eq:expansion 2}
L\varphi_2-\mu_0 \varphi_2&=-\varphi_1^2,\\
\label{eq:expansion 3}
L\varphi_3-\mu_0\varphi_3&=\mu_2 \varphi_1-2\varphi_1\varphi_2,\\
\label{eq:expansion 4}
L\varphi_4-\mu_0\varphi_4&=\mu_2\varphi_2-2\varphi_1\varphi_3-\varphi_2^2,\\
\label{eq:expansion 5}
L\varphi_5-\mu_0\varphi_5&=\mu_2\varphi_3+\mu_4 \varphi_1
-2\varphi_1\varphi_4-2\varphi_2\varphi_3.
\end{align}
By definition, $\varphi_1(x)=\cos(\xi x)$ and $\mu_0=m(\xi)$, where $\xi=2\pi/P$, so that \eqref{eq:expansion 1} is satisfied. The remaining coefficients in the power series for $\mu$ can be determined by the requirement that each right-hand side must lie in the range of the linear operator defined by the left-hand side. The functions $\varphi_n$ are then obtained by solving the resulting equations. By choice of parametrisation, $[\varphi_n]_1=0$ for each $n\ge 2$. 
Using the formula for $\varphi_1$, the right-hand side of \eqref{eq:expansion 2} reduces to
\[
-\frac12-\frac12 \cos(2\xi x),
\]
which in turn yields
\[
\varphi_2(x)=-\frac{1}{2(m(0)-m(\xi))}-\frac{1}{2(m(2\xi)-m(\xi))}\cos(2\xi x).
\]
The right-hand side of \eqref{eq:expansion 3} then simplifies to
\begin{equation}\label{eq:mu2}
\begin{aligned}
&\left(\mu_2-\frac{1}{m(\xi)-m(0)}-\frac{1}{2(m(\xi)-m(2\xi))} \right)\cos(\xi x)\\
&\quad
-\frac{1}{2(m(\xi)-m(2\xi))} \cos(3\xi x),
\end{aligned}
\end{equation}
yielding the relation
\[
\mu_2=\frac{1}{m(\xi)-m(0)}+\frac{1}{2(m(\xi)-m(2\xi))}.
\]
It is not hard to see that this function is negative as \(\xi \searrow 0\), and positive as \(\xi \nearrow \infty\). This yields the existence of the numbers \(P_1\) and \(P_2\) in the theorem. 

We now give the calculations for the higher-order derivative \(\mu^{(4)}(0)\) mentioned in Remark~\ref{rem:mu2}. By solving \eqref{eq:expansion 3}, we obtain
\begin{align*}
\varphi_3(x)=\frac{1}{2(m(2\xi)-m(\xi))(m(3\xi)-m(\xi))}\cos(3\xi x).
\end{align*}
The right-hand side of \eqref{eq:expansion 4} now reduces to
\begin{align*}
&\frac{1}{4(m(0)-m(\xi))^2}-\frac{1}{4(m(0)-m(\xi))(m(\xi)-m(2\xi))}-\frac{1}{8(m(2\xi)-m(\xi))^2}\\
&
-\frac{1}{2(m(2\xi)-m(\xi))}\left(\frac{1}{m(3\xi)-m(\xi)}{-\frac{1}{2(m(2\xi)-m(\xi))}}\right)\cos(2\xi x)
\\
&-\frac{1}{2(m(2\xi)-m(\xi))}\left(\frac{1}{m(3\xi)-m(\xi)}+\frac{1}{4(m(2\xi)-m(\xi))}\right)\cos(4\xi x),
\end{align*}
which gives
\begin{align*}
\varphi_4(x) &=
\frac{1}{4(m(0)-m(\xi))^3}
-\frac{1}{4(m(0)-m(\xi))^2(m(\xi)-m(2\xi))}
\\
&\quad-\frac{1}{8(m(0)-m(\xi))(m(2\xi)-m(\xi))^2}\\
&\quad-\frac{1}{2(m(2\xi)-m(\xi))^2}\left(\frac{1}{m(3\xi)-m(\xi)}{-\frac{1}{2(m(2\xi)-m(\xi))}}\right)\cos(2\xi x)
\\
&\quad- \frac{1}{2(m(2\xi)-m(\xi))(m(4\xi)-m(\xi))} \\
&\qquad \times  \left(\frac{1}{m(3\xi)-m(\xi)}+\frac{1}{4(m(2\xi)-m(\xi))}\right) \cos(4\xi x).
\end{align*}
In order to determine $\mu_4$, we finally compute the $\cos(\xi x)$ component of the right-hand side of \eqref{eq:expansion 5}.
This results in
\begin{align*}
\mu_4&=
\frac{1}{2(m(0)-m(\xi))^2}\left(\frac{1}{m(0)-m(\xi)}+\frac{1}{m(2\xi)-m(\xi)}\right)\\
&\quad-\frac{1}{4(m(2\xi)-m(\xi))^2} \left( \frac{1}{m(0)-m(\xi)} + \frac{3}{m(3\xi)-m(\xi)}\right)\\
&\quad +\frac{1}{4(m(2\xi)-m(\xi))^3}
\end{align*}
and one finds, numerically, that \(\mu_4>0\) for \(\xi=\xi_0\) (see Remark~\ref{rem:mu2}).
\end{proof}

With 
\[
U = \left\{ (\varphi,\mu) \in C^\alpha_\text{even}(\s_P) \times \R \colon \varphi  < \mu/2 \right\},
\]
we let
\begin{equation*}\label{eq:S}
S = \left\{ (\varphi,\mu) \in  U \colon  F(\varphi,\mu) = 0  \right\}
\end{equation*}
be our set of solutions. Note that for nonconstant solutions satisfying \(\varphi \leq \mu/2\), the wave speed \(\mu\) is a priori bounded from above. Since \(K\) is positive with \(\int_\R K(x)\,\diff x = 1\), one namely has \(\mu \sup \varphi \leq (\sup \varphi)^2 + \sup \varphi\). Because  furthermore \(\sup \varphi > 0\)  for nonconstant solutions by Lemma~\ref{lemma:apriori_1} and Remark~\ref{rem:sign-changing}, one obtains \(\mu \leq 1 + \frac{\mu}{2}\), and thus 
\begin{equation}\label{eq:mu leq 2}
\mu \leq 2.
\end{equation}
We shall later improve this general bound in the case when \(\mu = \mu(s)\) is taken along our bifurcation curve (cf.~the proof of Theorem~\ref{thm:noperiod}), but we first extend the curve to a global one. The following theorem is is an easy adaption of \cite[Theorem 4.4.]{EK11} with \(U\) and \(S\) as above.

\begin{figure}
\begin{center}
\includegraphics[width=0.5\linewidth]{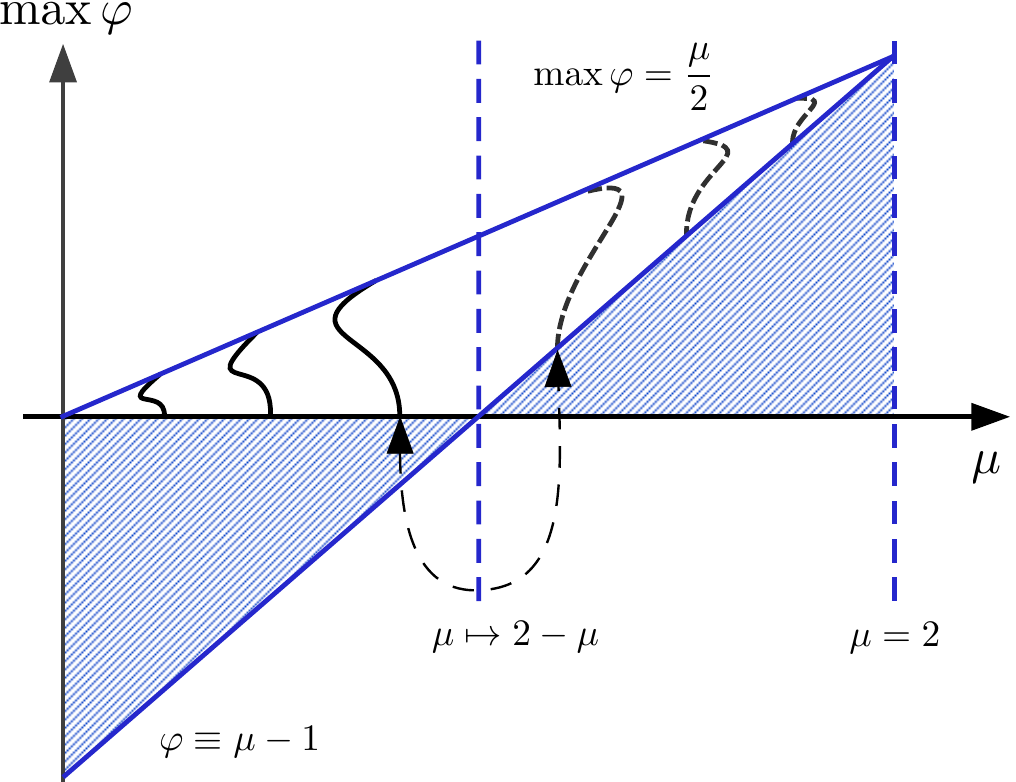}
\caption{The global bifurcation diagram obtained in Theorem~\ref{thm:global_whit}. As follows by construction and from Section~\ref{sec:nodal}, the maxima of these solutions for \(\mu \in [0,1]\) lie in the interval \([0,\mu/2]\). The solutions with wave speed \(\mu \in [1,2]\) are in one-to-one-correspondence with the former via the Galilean transformation \eqref{eq:galilean}. Along the main bifurcation branch the wave speed \(\mu\) is bounded away both from vanishing, cf. Corollary~\ref{cor:essentialbound}, and from unit speed, cf. Remark~\ref{rem:wavespeed bounded away from 1}. As proved in Theorem~\ref{thm:local_whit}, for periods \(P\geq P_0 \approx 2.57\) the bifurcation is of sub-critical pitchfork type, and in this case numerical calculations \cite{EK11,MR3390078} show a turning point near the highest wave.}
\label{fig:bifurcation}
\end{center}
\end{figure}

\begin{theorem}[Global bifurcation]
\label{thm:global_whit}
Whenever \(\mu_{P,1}^{(j)}(0) \neq 0\) for some \(j \in \N\) in Theorem~\ref{thm:local_whit}, the curves \(s \mapsto (\varphi_{P,1}({s}),\mu_{P,1}({s}))\) of solutions to the Whitham equation extend to global continuous curves of solutions $\mathfrak{R}_{P} \colon \R_{\geq 0} \to S$, that allow a local real-analytic  reparametrisation around each \({s} > 0\). One of the following alternatives holds:
\begin{itemize}
\item[(i)] $\| (\varphi_{P,1}({s}), \mu_{P,1}({s}))  \|_{C^\alpha(\s) \times \R} \to \infty$  as ${s} \to \infty$.
\item[(ii)] \(\dist(\mathfrak{R}_P, \partial U) = 0.\)
\item[(iii)] ${s} \mapsto  (\varphi_{P,1}({s}),\mu_{P,1}({s})) $ is (finitely) periodic.
\end{itemize}
\end{theorem}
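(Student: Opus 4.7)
The plan is to apply the analytic global bifurcation theorem of Buffoni, Dancer and Toland \cite{MR1956130} to the nonlinear operator $F \colon U \to C^\alpha_{\text{even}}(\s_P)$ defined in \eqref{eq:fop_whitham}. The local analytic branch at $(0,\mu_{P,1}^*)$ constructed in Theorem~\ref{thm:local_whit}(i) provides the initial data. The abstract theorem then extends it to a maximal continuous curve $\mathfrak{R}_P \colon \R_{\geq 0} \to S$, admitting local real-analytic reparametrisations around each interior point, with precisely the trichotomy (i)--(iii) as the only possibilities for its global behaviour. What must be verified in order to apply \cite{MR1956130} is: (a) real-analyticity of $F$ on $U$; (b) the Fredholm index-zero property of $D_\varphi F$ along the solution curve; and (c) compactness of closed bounded subsets of $S$ that stay at positive distance from $\partial U$.

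Analyticity of $F$ is immediate: $L$ is a bounded linear operator on $C^\alpha_{\text{even}}(\s_P)$ (it factors through $C^{\alpha+1/2}_{\text{even}}(\s_P)$ by the mapping property recorded in Section~\ref{sec:Kp}), and the pointwise quadratic map $\varphi \mapsto \varphi^2$ is analytic on the Banach algebra $C^\alpha_{\text{even}}(\s_P)$. For the Fredholm property I would compute
\[
D_\varphi F(\varphi,\mu)\,\psi = (\mu - 2\varphi)\psi - L\psi
\]
and exploit that $(\varphi,\mu) \in U$ forces $\mu - 2\varphi$ to be strictly positive and bounded away from zero. Hence multiplication by $\mu - 2\varphi$ is an isomorphism of $C^\alpha_{\text{even}}(\s_P)$, while $L$ factors through the compact embedding $C^{\alpha+1/2}_{\text{even}}(\s_P) \hookrightarrow C^\alpha_{\text{even}}(\s_P)$ and is therefore compact. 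Consequently $D_\varphi F(\varphi,\mu)$ is an isomorphism plus a compact operator, hence Fredholm of index zero at every point of $S$.

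The subtlest step is the compactness condition, and this is where the interaction with the open set $U$ matters. I would rewrite the equation in the form $\varphi = \tfrac{\mu}{2} - \sqrt{\mu^2/4 - L\varphi}$, as in the proof of Theorem~\ref{thm:regularity I}. If $\{(\varphi_n,\mu_n)\} \subset S$ is bounded in $C^\alpha_{\text{even}}(\s_P) \times \R$ and satisfies $\inf_n \dist((\varphi_n,\mu_n),\partial U) > 0$, then $\mu_n$ has a convergent subsequence, and $L\varphi_n$ has a subsequence converging in $C^{\alpha+1/2}_{\text{even}}(\s_P)$ by compactness of $L$; composing with the analytic Nemytskii map $v \mapsto \tfrac{\mu}{2} - \sqrt{\mu^2/4 - v}$, whose domain of definition is precisely ensured by the uniform positivity of $\mu_n^2/4 - L\varphi_n$, one transports this to convergence of $\varphi_n$ in $C^\alpha_{\text{even}}(\s_P)$. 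Together with the simple-eigenvalue and transversality data already supplied by Theorem~\ref{thm:local_whit} at $(0,\mu_{P,1}^*)$, all hypotheses of \cite{MR1956130} are in place. The global theorem then yields the curve $\mathfrak{R}_P$ with the stated three alternatives, where the interpretation is precisely that either the norm blows up, or the curve approaches $\partial U$ (the set where $\sup(\varphi - \mu/2) = 0$), or the curve closes up into a loop; no other outcome is allowed.
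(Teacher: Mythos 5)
Your proposal follows essentially the same route as the paper, which delegates the body of the argument to [EK11]: one applies the Buffoni--Dancer--Toland analytic global bifurcation machinery to $F$ on $U$, and the real work is verifying that the hypotheses are met. Your verifications are correct and well-organised. The analyticity of $F$ on $U$ is immediate since $C^\alpha_{\text{even}}(\s_P)$ is a Banach algebra and $L$ is bounded. Your Fredholm argument $D_\varphi F(\varphi,\mu) = (\mu - 2\varphi)\cdot - L$, with multiplication by $\mu-2\varphi$ an isomorphism (uniform positivity following from $(\varphi,\mu)\in U$ together with compactness of $\s_P$) and $L$ compact via the $C^\alpha\hookrightarrow C^{\alpha+1/2}$ smoothing plus compact embedding, is exactly right. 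The compactness of bounded solution sets at positive distance from $\partial U$, by means of the fixed-point form $\varphi = \tfrac{\mu}{2}-\sqrt{\mu^2/4 - L\varphi}$ with the Nemytskii map well-defined on the relevant open set, is also the standard and correct argument; this is what the paper means by the ``compactness properties of $L$''.

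The one point you elide is the role of $\mu''_{P,1}(0)\neq 0$ (or, failing that, $\mu^{(4)}_{P,1}(0)\neq 0$). The paper's brief proof singles out exactly two ingredients --- the compactness of $L$ and this derivative condition --- and explicitly says that the entire content of this theorem beyond what is already in [EK11] is the verification in Theorem~\ref{thm:local_whit} that for \emph{every} period $P>0$ some such derivative is nonzero. You instead invoke ``the simple-eigenvalue and transversality data'' from Theorem~\ref{thm:local_whit}, which are the Crandall--Rabinowitz conditions guaranteeing the \emph{local} analytic arc, but those are not the same as the statement that $\mu_{P,1}$ is not flat at $s=0$. In the formulation of the global analytic bifurcation theorem used in [EK11] and [MR1956130], this nondegeneracy of the local bifurcation curve (since $\mu'(0)=0$ by evenness, one needs some higher-order derivative to be nonzero so that the branch is genuinely of pitchfork type) enters as an input. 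Your write-up should either supply this input explicitly, citing Theorem~\ref{thm:local_whit}, or explain why the version of the global theorem you are invoking does not require it --- in which case you would be proving slightly more than [EK11] and the paper should have said so.
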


\begin{remark}
If \(\mathfrak{R}_P({s}_1) = \mathfrak{R}_P({s}_2)\) for some \({s}_1 \neq {s}_2\) at a point where \( \ker \Diff F (\mathfrak{R}({s}_1)) = \{0\}\), then alternative (iii) in Theorem~\ref{thm:global_whit} occurs with \(|{s}_1 - {s}_2|\) being a multiple of the period. Also, the values of \({s}\) for which the kernel of \(\Diff F(\mathfrak{R}({s}))\) is nontrivial are isolated. For both these facts, see \cite{MR1956130}.
\end{remark}

\begin{remark}
Concerning the assumption that \(\mu_{P,1}^{(j)}(0)\) should be nonzero for some \(j \in \N\), see also the discussion in Remark~\ref{rem:mu2}.
\end{remark}

\begin{proof}
This assertion was proved in the case \(P=2\pi\) in \cite{EK11} using compactness properties of the operator \(L\) and the fact that \(  \mu_{2\pi,1}^{\prime\prime}(0)  \neq 0\). For a general period \(P>0\) the assertion follows in the same way using the assumption \(\mu_{P,1}^{(j)}(0) \neq 0\).
\end{proof}

We shall now prove that alternative (iii) in Theorem~\ref{thm:global_whit} is excluded, and that (i) and (ii) happen simultaneously as \({s} \to \infty\) along the primary bifurcation branch \(\mathfrak{R}_P\). To that aim, let \(\mu^* = \mu_{P,1}^*\) be the primary bifurcation point from Theorem~\ref{thm:local_whit} and let
\[
\varphi^* = \cos(2\pi \cdot /P)
\]
be the direction of bifurcation in \(C^{\alpha}(\s_P)\). We  follow the route of \cite{EK11}, adding more information to the behaviour along the bifurcation branch. 

We start by proving that alternative (iii) in Theorem~\ref{thm:global_whit} cannot occur. To that aim, introduce 
\[
\mathcal{K} = \{ \varphi \in C_\text{even}^\alpha(\s_P) \colon \varphi \text{ is nondecreasing on } (-P/2,0)\},
\]
which is a closed cone in \(C^\alpha(\s_P)\). Let furthermore \(\varphi({s}) = \varphi_{P,1}({s})\), \(\mu({s}) = \mu_{P,1}({s})\),  \(\mathfrak{R} = \mathfrak{R}_{P}\), and let \(\mathfrak{R}^1\) and \(S^1\) denote the \(\varphi\)-components of \(\mathfrak{R}\) and \(S\), respectively. 

\begin{theorem}\label{thm:noperiod}
Alternative (iii) in Theorem~\ref{thm:global_whit} cannot occur.
\end{theorem}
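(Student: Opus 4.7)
The plan is to show that the cone $\mathcal{K}$ is invariant along the bifurcation branch $\mathfrak{R}$, and then to exploit the local Crandall--Rabinowitz structure at $(0, \mu_{P,1}^*)$ to convert the assumed periodicity into a contradiction.

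\textbf{Cone invariance.} Set $J = \{s \geq 0 : \varphi(s) \in \mathcal{K}\}$. I plan to prove $J = [0,\infty)$ by a clopen argument. Closedness of $J$ in $[0,\infty)$ is immediate, since $\mathcal{K}$ is closed in $C^\alpha_{\text{even}}(\s_P)$ and $s \mapsto \varphi(s)$ is continuous. For openness at an $s_0 \in J$ with $\varphi(s_0) \not\equiv 0$, Theorem~\ref{thm:nodal} supplies $\varphi(s_0) < \mu(s_0)/2$ uniformly, strict monotonicity $\varphi^\prime(s_0) > 0$ on $(-P/2,0)$, and strict signs $\varphi^{\prime\prime}(s_0)(0) < 0 < \varphi^{\prime\prime}(s_0)(P/2)$. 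Because $\varphi(s_0) < \mu(s_0)/2$ is an open condition in $C^0$, Theorem~\ref{thm:regularity I} furnishes uniform higher-regularity bounds on a neighborhood; a standard compactness-and-interpolation argument then upgrades the $C^\alpha$-continuity of $\mathfrak{R}$ to $C^2$-continuity near $s_0$, preserving the three strict signs and hence monotonicity on $(-P/2,0)$. At $s_0 = 0$, openness follows directly from the local expansion $\varphi(s) = s\cos(2\pi\cdot/P) + O(s^2)$, whose leading term is strictly increasing on $(-P/2,0)$. Hence $J = [0,\infty)$.

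\textbf{Ruling out periodicity.} Assume alternative (iii) holds, so $\mathfrak{R}(T+s) = \mathfrak{R}(s)$ for some $T>0$, and in particular $\mathfrak{R}(T) = (0,\mu_{P,1}^*)$. By Crandall--Rabinowitz, the local solution set at $(0,\mu_{P,1}^*)$ is the union of the trivial line $\{(0,\mu)\}$ and an analytic bifurcating branch $t \mapsto (\varphi_{\text{loc}}(t), \mu_{\text{loc}}(t))$ parameterised by $t = [\varphi]_1$, with $\varphi_{\text{loc}}(t) = t\cos(2\pi\cdot/P) + O(t^2)$. If $\mathfrak{R}$ coincided with the trivial line on an open neighborhood of $s = T$, periodicity would force $\varphi \equiv 0$ near $s = 0$, contradicting the local expansion for small $s > 0$. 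Therefore $t(s) := [\varphi(s)]_1$ is analytic near $s = T$ with $t(T) = 0$, and periodicity gives $t(T+\epsilon) = t(\epsilon) = \epsilon$ for small $\epsilon > 0$. This forces a simple zero with $t^\prime(T) = 1$, so $t(T-\epsilon) = -\epsilon + O(\epsilon^2) < 0$ for small $\epsilon > 0$; but then $\varphi(T-\epsilon) \approx -\epsilon\cos(2\pi\cdot/P)$ is strictly \emph{decreasing} on $(-P/2,0)$, violating the cone invariance established above.

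\textbf{Main obstacle.} The delicate step is the openness of $J$ at an interior $s_0$ with $\varphi(s_0) \not\equiv 0$: the second-order signs of $\varphi^{\prime\prime}(s_0)$ at the endpoints $0$ and $P/2$ supplied by Theorem~\ref{thm:nodal} must persist under small perturbations of the branch, yet they are not directly controlled by the $C^\alpha$-continuity of $s \mapsto \varphi(s)$. The remedy is to use $\varphi(s_0) < \mu(s_0)/2$ as an open condition in $C^0$ and invoke Theorem~\ref{thm:regularity I} to obtain uniform higher-order regularity in a neighborhood, then upgrade $C^\alpha$-continuity of the branch to $C^2$-continuity via a compactness-interpolation argument. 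This upgrade is the technical heart of the proof; everything else is driven by the nodal information in Theorem~\ref{thm:nodal} and the local analytic structure at $(0,\mu_{P,1}^*)$.
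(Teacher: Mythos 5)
Your proposal follows the same high-level strategy as the paper --- establish invariance of the cone $\mathcal{K}$ along the branch, then derive a contradiction from the local bifurcation structure at $(0,\mu^*_{P,1})$ --- and your openness argument at nonconstant points (via the $C^\alpha\to C^2$ upgrade and the strict second-order signs from Theorem~\ref{thm:nodal}) is essentially the paper's own. However, there is a genuine gap in the cone-invariance step that is precisely where the real work of the paper's proof lies.

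Your clopen argument proves openness of $J$ only at $s_0=0$ and at $s_0>0$ with $\varphi(s_0)$ \emph{nonconstant} (you write $\varphi(s_0)\not\equiv 0$, but Theorem~\ref{thm:nodal} requires nonconstancy, so nonzero constants are not covered). This leaves two scenarios unaddressed: $\varphi(s_0)$ equal to a nonzero constant $\mu(s_0)-1$, and $\varphi(s_0)\equiv 0$ at some interior $s_0>0$. Both lie in $\mathcal{K}$, so they are honest members of $J$, and your argument gives no openness there; without it $J=[0,\infty)$ does not follow. These are not pathological edge cases: the line of constant solutions $(\mu-1,\mu)$ crosses the trivial line transcritically at $\mu=1$, and \emph{a priori} nothing prevents $\mathfrak{R}$ from wandering back to either curve. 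The paper's proof is organised precisely around this difficulty. It takes $\bar{s}$ to be the supremum of parameters with $\varphi(s)\in \mathcal{K}\setminus\{0\}$, shows $\varphi(\bar{s})$ must be constant, rules out $\varphi(\bar s)=\mu(\bar s)-1\ne 0$ by means of the Galilean map $\mu\mapsto 2-\mu$, $\varphi\mapsto\varphi+1-\mu$ (which sends that constant line to $(0,2-\mu)$ with $2-\mu>1$, where Theorem~\ref{thm:local_whit}~(ii) gives uniqueness), rules out $\mu(\bar s)=1$ via Proposition~\ref{prop:mean} and local uniqueness at $(0,1)$, and only then lands at a bifurcation point $(0,\mu^*_{P,k})$. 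Your clopen formulation silently assumes these cases away.

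Your second part (the $t(s)=[\varphi(s)]_1$ argument with $t(T)=0$, $t'(T)=1$, $t(T-\epsilon)<0$) is in spirit the same reconnection argument the paper makes after reaching the bifurcation point, and it is a nice way to phrase it. Two remarks: first, it implicitly assumes the kernel direction is $\cos(2\pi\cdot/P)$, i.e.\ $k=1$ --- the paper is careful to exclude $k\ge 2$ by noting $\cos(2\pi k\cdot/P)\notin\mathcal{K}$ for $k\ge 2$; second, and more importantly, the conclusion that $\varphi(T-\epsilon)\notin\mathcal{K}$ only produces a contradiction if cone invariance on $[0,T]$ has actually been established, which brings us back to the gap above. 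To repair the proof you would need to supply the Galilean and $\mu=1$ exclusions (or an equivalent argument) inside the openness step, at which point it essentially becomes the paper's argument in a different packaging.
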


\begin{remark}\label{rem:noperiod}
Our proof of Theorem~\ref{thm:noperiod} is based on  \cite[Theorem 9.2.2]{MR1956130}, but has been rewritten to deal with the transcritical curve of constant solutions \(\mu \mapsto (\mu -1, \mu)\) crossing the line of trivial solutions at \(\mu = 1\). Note in particular in that \(\varphi({s}) \in \mathcal{K}\setminus \{0\}\). 
\end{remark}

\begin{remark}\label{rem:wavespeed bounded away from 1}
The proof of Theorem~\ref{thm:noperiod} furthermore shows that \(\mu({s}) < 1\), uniformly for all \({s}\). We have \(\mu(s) < 1\) for small \(s\), and Proposition~\ref{prop:mean} implies that the only way to reach \(\mu = 1\) is by approaching \(\varphi = 0\). Theorem~\ref{thm:local_whit}~(ii) holds that the unique solutions in a neighbourhood of \((\varphi,\mu) = (0,1)\) are the constant solutions.  Since we show below that the main bifurcation curve does not connect to the two lines of constant solutions, it follows that the wave speed is bounded away from \(1\) from the left.
\end{remark}

\begin{proof}
If \(\varphi({s}) \in \mathcal{K}\setminus \{0\}\) for all \({s} > 0\) there is nothing to prove, so assume for a contradiction that there exists \(\bar {s}\); the largest positive number such that \(\varphi({s}) \in \mathcal{K}\setminus \{0\}\) for all \({s} < \bar {s}\). Since \(\mathcal{K}\) is closed in \(C_\text{even}^\alpha(\s_P)\), we have \(\varphi(\bar{s}) \in \mathcal{K}\). We now argue that \(\varphi(\bar{s}) = const\), by showing that if \(\varphi \in \mathfrak{R}^1 \cap  \mathcal{K}\) is nonconstant, then \(\varphi\) is an interior point of \(S^1 \cap \mathcal{K}\) with respect to the \(C^\alpha\)-metric relative to \(S^1\). 

Thus, let \(\varphi\) be a nonconstant function on the main bifurcation branch that is nondecreasing on \((-P/2,0)\). According to Theorem~\ref{thm:regularity I}, \(\varphi\) is then smooth. Hence we can apply Theorem~\ref{thm:nodal}, which shows that \( \varphi' > 0\) on \((-P/2,0)\), \(\varphi^{\prime\prime}(0) < 0\) and \(\varphi^{\prime\prime}(P/2) > 0\). Let \(\phi\) be another solution, lying within \(\delta\)-distance to \(\varphi\) in \(C^\alpha\), with \(\delta \ll 1\) small enough for \(\phi < \frac{\mu}{2}\) to hold. Then, for both these solutions, iteration of \eqref{eq:bootstrapping} yields a continuous fixed-point map \(C^\alpha  \to C^k\), \(k \geq 1\) arbitrary, so that in fact \(\|\varphi - \phi\|_{C^2} < \tilde \delta \ll 1\), where \(\tilde \delta\) can be made arbitrarily small by choosing \(\delta\) even smaller. 
It follows that, for \(\delta\) small enough, \(\phi\) is strictly increasing on \((-P/2,0)\). This means that \(\varphi(\bar {s}) = const\) (anything else would violate the definition of \(\bar{s}\)).

If \(\varphi(\bar{s}) = 0\), Theorem~\ref{thm:local_whit} enforces \(\mu(\bar{s})\) to be a bifurcation point. To exclude \(\varphi(\bar{s}) = const \neq 0\), note first that all nonzero constant solutions are of the form \(\varphi = \mu - 1\), \(\mu \neq 1\). Now, given that \(\varphi(\bar{s})\) is a nonzero constant (which we shall refute)  Proposition~\ref{prop:mean} implies that \(\mu(\bar{s}) < 1\), since in passing \(\mu = 1\) the solution \(\varphi\) would have to vanish, which in turn would imply that \(\varphi(\bar{s}) = 0\). We now claim that, for \(\mu < 1\), the curve \(\mu \mapsto (\mu -1,\mu)\) of trivial solutions is locally unique, meaning that no other solutions in \(S\) connect to this curve. The key to this observation is the Galilean transformation 
\begin{equation}\label{eq:galilean}
\mu \mapsto 2 - \mu, \qquad \varphi \mapsto \varphi + 1 - \mu, 
\end{equation}
giving a one-to-one correspondence between solutions with wave speed \(\mu < 1\) and such with \(\mu > 1\). In particular, \eqref{eq:galilean} defines a map  \((\mu -1, \mu) \mapsto (0,2-\mu)\) between the two lines of constant solutions in the \((\varphi,\mu)\)-plane (see Figure~\ref{fig:bifurcation}). But according to Theorem~\ref{thm:local_whit}, there are no nonzero solutions connecting to the line \((0,\tilde\mu)\) at \(\tilde\mu = 2 - \mu > 1\). Thus \(\varphi(\bar{s}) = 0\). 

We now rule out \(\mu(\bar{s}) = 1\) in the case \(\varphi(\bar{s})= 0\). From Proposition~\ref{prop:mean} we know that \(\mu({s}) < 1 \), for all \({s} < \bar{s}\). Also, Theorem~\ref{thm:local_whit} implies that the only solutions with \(\mu < 1\) connecting to  \((\varphi, \mu) = (0,1)\) lie on the curves \((0,\mu)\) and \((\mu -1,\mu)\) of constant solutions. The latter curve we already proved is not connected to \(\mathcal{K} \setminus \{0\}\) for \(\mu < 1\). The only remaining possibility for \(\mu(\bar{s}) = 1\) would be that \(\mathfrak{R}\) connects \((0,\mu^*)\) to  \((0,1)\) via the line of zero solutions. Since this violates the definition of \(\bar{s}\), we conclude that \(\mu(\bar{s}) = \mu_{P,k}^*\), for some \(k \geq 1\).

Thus, we may assume that the point \((\varphi,\mu)(\bar{s})\) is a  local bifurcation point, and according to Theorem~\ref{thm:global_whit} we may choose a real-analytic reparametrisation of \(\mathfrak{R}\) around that point. In view of that \(\varphi(\bar{s}) = 0\), there then exists a largest integer \(j\) such that 
\[
\varphi({s}) = \frac{\Diff^j_{s} \varphi(\bar{s})}{j!}({s} - \bar{s})^j + O(|{s} - \bar{s}|^{j+1}).
\]
By considering \({s} < \bar{s}\), one sees that 
\[
(-1)^j \Diff^j_{s}  \varphi(\bar{s}) \in  \mathcal{K} \setminus \{0\}. 
\]
On the other hand, by differentiating \(F(\varphi({s}),\mu({s})) = 0\) \(j\) times with respect to \({s}\), one obtains that
\[
(-1)^j \Diff_\varphi F(0,\mu_{P,k}^*)   \Diff^j_{s}  \varphi(\bar{s}) = 0,
\]
so that \(\phi = (-1)^j \Diff^j_{s} \varphi(\bar{s})\) fulfils \((L - \mu_{P,k}^*)  \phi = 0\). This enforces \(\phi(x) = \tau \cos(2\pi k x/P)\), since we are in a space of even \(P\)-periodic functions. Now, such functions cannot lie in \(\mathcal{K}\) if \(k \geq 2\). Also, since \(-\varphi^* \not \in \mathcal{K}\), we have found that for \({s} < \bar{s}\) but sufficiently close, \(\mathfrak{R}\) coincides with the primary branch (that is, with itself) for \(0 <{s} \ll 1\).\footnote{Note here that \(\mathfrak{R}_{{s} \ll 1}\) belongs to \(\mathcal K\): From Theorem~\ref{thm:local_whit} we get that \(\varphi({s}) = {s} \cos(2\pi k x/P) + O({s}^2)\) in \(C^\alpha(\s_P)\), and from Theorem~\ref{thm:regularity I} that all small solutions are smooth. By combining these two properties one gets the desired uniformity in \(x\) to conclude that \(\varphi({s})\) is strictly increasing on \((-P/2,0)\) for \({s} \ll 1\) small enough.} This, in turn, implies that there are countably many pairs \(({s}_{1,j},{s}_{2,j})\) with \({s}_{1,j} \searrow 0\) and \({s}_{2,j} \nearrow \bar{s}\) for which \(\mathfrak{R}({s}_{1,j}) = \mathfrak{R}({s}_{2,j})\). In light of the remark following Theorem~\ref{thm:global_whit} this is a contradiction, and we conclude that \(\bar{s}\) does not exists. Thus, \(\varphi({s}) \subset \mathcal{K} \setminus \{0\}\) for all \({s} > 0\).
\end{proof}

To exclude a trivial wave in the limit \(s \to \infty\) we need a couple of results which show that \(\mu\) is a priori bounded away from \(0\) along the bifurcation branches. Recall that we already know that \(\mu\) is bounded from above by \(1\); cf.~Remark \ref{rem:wavespeed bounded away from 1}.  

\begin{lemma}\label{lemma:convergence}
Any sequence of Whitham solutions $(\varphi_n,\mu_n) \in S$ with \((\mu_n)_n\) bounded has a subsequence which converges uniformly to a solution $\varphi$. 
\end{lemma}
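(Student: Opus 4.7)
The plan is to combine the smoothing property of the operator $L$ with the algebraic structure of the Whitham equation. The key observation is that inside $S$ the equation can be inverted for $\varphi$ in terms of $L\varphi$ via a square root, so compactness of $(L\varphi_n)_n$ automatically upgrades to compactness of $(\varphi_n)_n$.

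First, I would pass to a subsequence along which $\mu_n \to \mu \in \R$. By Corollary~\ref{cor:mubound}, the boundedness of $(\mu_n)_n$ yields a uniform bound $\|\varphi_n\|_\infty \le M$. The continuous mapping $L\colon L^\infty(\s_P) \to \mathcal{C}^{1/2}(\s_P)$ from the Besov/Zygmund framework recorded in Section~\ref{sec:Kp} (using $L^\infty \subset \mathcal{C}^0$) then bounds $(L\varphi_n)_n$ uniformly in $C^{1/2}(\s_P)$. Since the circle is compact, Arzel\`a--Ascoli produces a further subsequence along which $L\varphi_n \to g$ uniformly on $\s_P$.

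Next, because each $(\varphi_n,\mu_n) \in S$ satisfies $\varphi_n < \mu_n/2$, I would rewrite \eqref{eq:steadywhitham} in the quadratic form $(\varphi_n - \mu_n/2)^2 = \mu_n^2/4 - L\varphi_n$ and take the appropriate (negative) square root to obtain
\[
\varphi_n = \tfrac{\mu_n}{2} - \sqrt{\tfrac{\mu_n^2}{4} - L\varphi_n},
\]
the radicand being automatically nonnegative. Together with $\mu_n \to \mu$ and the uniform convergence $L\varphi_n \to g$, this forces $\varphi_n \to \varphi := \mu/2 - \sqrt{\mu^2/4 - g}$ uniformly on $\s_P$, where the limiting radicand is nonnegative as a uniform limit of nonnegative continuous functions.

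Finally, since $K \in L^1(\R)$ the operator $L$ is continuous on $L^\infty$, so uniform convergence $\varphi_n \to \varphi$ gives $L\varphi_n \to L\varphi$ uniformly; hence $g = L\varphi$. Substituting this identity back into the defining formula for $\varphi$ yields $(\varphi - \mu/2)^2 = \mu^2/4 - L\varphi$, that is, $\varphi$ solves \eqref{eq:steadywhitham}. There is no serious obstacle; the only subtlety is the branch selection for the square root, which is guaranteed by membership in $S$, together with the observation that the limit $\varphi$ need not itself lie in $S$ (it may touch $\mu/2$). This latter point is in fact essential, since the construction of the highest wave relies precisely on such a limit solution.
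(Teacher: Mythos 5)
Your proposal is correct and follows essentially the same route as the paper, but with some steps made more explicit and a few details obtained by slightly different means. The paper gets the uniform bound on $\|\varphi_n\|_\infty$ by a direct algebraic manipulation of the equation, $\|\varphi\|_\infty^2 \le (|\mu|+1)\|\varphi\|_\infty$, which works for all solutions without distinguishing constant from nonconstant ones, whereas you invoke Corollary~\ref{cor:mubound}; both give what is needed. For equicontinuity of $(L\varphi_n)_n$, the paper argues by dominated convergence using $K\in L^1$ and continuity a.e., while you use the $L^\infty\to\mathcal{C}^{1/2}$ smoothing estimate from the Besov framework; again both are valid, and yours yields the slightly stronger uniform $C^{1/2}$ bound. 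Your main added value is that you make explicit the step the paper leaves implicit: inverting the quadratic, $\varphi_n=\mu_n/2-\sqrt{\mu_n^2/4-L\varphi_n}$ (with the correct branch selected by $\varphi_n<\mu_n/2$), so that uniform convergence of $L\varphi_{n_k}$ and $\mu_{n_k}\to\mu$ transfer to uniform convergence of $\varphi_{n_k}$, and then passing to the limit in the equation using continuity of $L$ on $L^\infty$. You also correctly observe that the limit need only satisfy $\varphi\le\mu/2$ rather than strict inequality, which is precisely the point that makes the lemma useful for constructing the highest wave.
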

\begin{proof}
We have 
\[
\|\varphi\|_\infty^2 \leq  \|\mu \varphi\|_{\infty} + \|L\|_{B(L^{\infty}(\R))} \|\varphi\|_\infty  =  ( |\mu| + 1) \|\varphi\|_\infty,
\] 
so that \((\varphi_n)_n\) is bounded whenever \((\mu_n)_n\) is. Since \(K\) is integrable and continuous almost everywhere, it follows by dominated convergence that \((L\varphi_n)_n\) is equicontinuous. Arzela--Ascoli's lemma then implies the existence of a uniformly convergent subsequence.
\end{proof}

\begin{corollary}\label{cor:essentialbound}
For any fixed period \(P> 0\), one has
\[
\mu(s) \gtrsim 1,
\]
uniformly for all \({s} \geq 0\) along the global bifurcation curve in Theorem~\ref{thm:global_whit}.
\end{corollary}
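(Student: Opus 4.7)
My plan is to argue by contradiction, supposing $\inf_{s\ge 0}\mu(s)=0$; continuity of $s\mapsto\mu(s)$ then provides a sequence $s_n$ with $\mu(s_n)\to 0$. I first exclude the possibility that $\mu(s_0)=0$ for some finite $s_0$: at such a point, $(\varphi(s_0),0)\in U$ would force $\varphi(s_0)<0$ pointwise, but Theorem~\ref{thm:noperiod} ensures $\varphi(s_0)\in\mathcal{K}\setminus\{0\}$ is nonconstant, so Lemma~\ref{lemma:apriori_1} (with $\mu(s_0)=0\le 1$) yields $\sup\varphi(s_0)\ge 0$, a contradiction. Hence we must have $s_n\to\infty$.

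I then extract a subsequential limit. Since $\mu(s_n)<1$ by Remark~\ref{rem:wavespeed bounded away from 1}, Lemma~\ref{lemma:convergence} produces a subsequence, not relabelled, along which $\varphi(s_n)\to\varphi_\infty$ uniformly and $(\varphi_\infty,0)$ solves the steady Whitham equation. The limit inherits evenness and monotonicity on $(-P/2,0)$; from $\varphi(s_n)<\mu(s_n)/2\to 0$ I obtain $\varphi_\infty\le 0$; and since $\varphi(s_n)$ is nonconstant with $\mu(s_n)<1$, Lemma~\ref{lemma:apriori_1} gives $0\le\varphi(s_n)(0)=\sup\varphi(s_n)<\mu(s_n)/2\to 0$, whence $\varphi_\infty(0)=0$. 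Since $\varphi_\infty\le 0$ cannot change sign, Remark~\ref{rem:sign-changing} forces $\varphi_\infty\equiv 0$.

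The decisive step is to deploy Lemma~\ref{lemma:essentialbound}: its hypotheses are satisfied by each $\varphi(s_n)$ along the main branch, yielding the uniform estimate
\[
\tfrac{\mu(s_n)}{2}-\varphi(s_n)(P/2)\ge\lambda_{K,P},
\]
with $\lambda_{K,P}>0$ depending only on $K$ and $P$. Passing to the uniform limit gives $\varphi_\infty(P/2)\le-\lambda_{K,P}<0$, contradicting $\varphi_\infty\equiv 0$; hence $\inf_{s\ge 0}\mu(s)>0$. The crux of the argument is the tension between two opposing constraints on the limit: the maximum-principle-flavoured Remark~\ref{rem:sign-changing} collapses $\varphi_\infty$ to the zero solution, while the kernel-sensitive inequality \eqref{eq:essentialbound}, which encodes the strict monotonicity of $K_P$ on the half-period, keeps the trough uniformly negative. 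It is this clash, a rigidity inherited directly from the complete monotonicity of the periodised kernel, that precisely excludes $\mu\to 0$.
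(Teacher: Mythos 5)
Your proof is correct and follows essentially the same route as the paper: extract a uniformly convergent subsequence via Lemma~\ref{lemma:convergence}, use Lemma~\ref{lemma:apriori_1} and Remark~\ref{rem:sign-changing} to force the limit to be identically zero, and then invoke the uniform lower bound of Lemma~\ref{lemma:essentialbound} to reach a contradiction. The preliminary step in which you show that the minimising sequence must escape to $s_n\to\infty$ is superfluous (the paper simply passes to a limit along any sequence with $\mu_n\to 0$) but harmless.
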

\begin{proof}
Assume for a contradiction that there is a sequence \((\mu_n)_n\) such that \(\mu_n \to 0\) as \(n \to  \infty\), while at the same time \(\varphi_n = \varphi_{\mu_n}\)  is a sequence along the global bifurcation curve in Theorem~\ref{thm:global_whit}. According to Lemma~\ref{lemma:convergence}, a subsequence \((\varphi_{n_k})_k\) converges uniformly to a solution \(\varphi_0\) of \eqref{eq:steadywhitham}. Because \(\varphi_{n_k} < \frac{\mu_k}{2} \to 0\) as \(k \to \infty\), it follows that \(\varphi_0 \leq 0\). In view of Lemma~\ref{lemma:apriori_1}, we have \(\max_x \varphi_0(x) = 0\), whence \(\varphi_0 \equiv 0\) by Remark~\ref{rem:sign-changing}. Lemma~\ref{lemma:essentialbound} thus leads to a contradiction:
\[
0 =  \lim_{k \to \infty} \left( \textstyle{\frac{\mu_{n_k}}{2}} - \varphi_{n_k}(\frac{P}{2}) \right) \geq \lambda_{K,P} > 0,
\]
which implies that \(\mu({s}) \gtrsim 1\), uniformly for all \({s} \geq 0\).
\end{proof}

Remark \ref{rem:wavespeed bounded away from 1} and Corollary~\ref{cor:essentialbound} show that \(\mu(s)\) is bounded from above and below. A bounded  \(\mu\)  is enough to conclude from \cite[Proposition 4.9]{EK11} that blow-up in \(S\) can only happen by approaching the boundary of \(U\).

\begin{proposition}\label{prop:i implies ii}\cite{EK11}
If alternative (i) in Theorem~\ref{thm:global_whit} occurs, then 
\[
\lim_{s \to \infty}  \left( \frac{\mu(s)}{2} - \max(\varphi(s)) \right) = 0.
\]
In particular, alternative (i) in Theorem~\ref{thm:global_whit} implies alternative  (ii).
\end{proposition}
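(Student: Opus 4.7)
The plan is a proof by contradiction. Suppose alternative (i) holds along the branch $\mathfrak{R}_P$, but the conclusion fails, so that there exist $\epsilon>0$ and a sequence $s_n\to\infty$ with
\[
\tfrac{\mu(s_n)}{2}-\max\varphi(s_n)\ge \epsilon.
\]
I shall produce a uniform $C^\alpha$-bound on $\varphi(s_n)$, contradicting (i).

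First I would pin down what alternative (i) really says along $\mathfrak{R}_P$. By Remark~\ref{rem:wavespeed bounded away from 1} and Corollary~\ref{cor:essentialbound}, the wave speeds $\mu(s)$ take values in a compact interval of $(0,1)$. Lemma~\ref{lemma:convergence} (or its elementary proof) therefore gives a uniform $L^\infty$-bound on $\varphi(s)$. Hence the blow-up in (i) must come from the $C^\alpha$-seminorm of $\varphi(s)$.

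The key observation is the algebraic identity obtained by completing the square in \eqref{eq:steadywhitham},
\[
\tfrac{\mu^2}{4}-L\varphi = \bigl(\tfrac{\mu}{2}-\varphi\bigr)^{2},
\]
which lets us write $\varphi = \tfrac{\mu}{2}-\sqrt{\mu^2/4-L\varphi}$, since $\varphi\le \mu/2$ on $\mathfrak{R}_P$. Under the contrapositive hypothesis $L\varphi(s_n)\le \mu(s_n)^2/4-\epsilon^2$ uniformly in $n$, so the argument of the square root is bounded away from its singularity. I would then run the bootstrap $[\varphi\mapsto L\varphi]\circ[L\varphi\mapsto \tfrac{\mu}{2}-\sqrt{\mu^2/4-L\varphi}]$ from the proof of Theorem~\ref{thm:regularity I}(i): the operator $L$ gains $1/2$ of a Besov derivative with norm independent of $\mu$, while the Nemytskii map $u\mapsto \tfrac{\mu}{2}-\sqrt{\mu^2/4-u}$ preserves $B^{s}_{\infty,\infty}\cap L^\infty$ with a norm depending only on $\mu$, on $\epsilon$, and on $s$, each of which is uniformly controlled along $(s_n)$. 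Iterating finitely many times, starting from the uniform $L^\infty$-bound, gives a uniform $C^\alpha$-bound on $\varphi(s_n)$, contradicting alternative (i).

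For the final assertion, once $\mu(s)/2 - \max\varphi(s)\to 0$ is established, the point $\mathfrak{R}_P(s)$ lies within $L^\infty\times\R$-distance $\mu(s)/2-\max\varphi(s)$ of $\partial U$ (translate $\varphi(s)$ upward by exactly this constant), and the same estimate holds in the $C^\alpha\times\R$-metric since translation by a constant does not change the Hölder seminorm. The main obstacle is the uniformity in the Nemytskii bootstrap: one has to keep track of how the operator norm of $u\mapsto \tfrac{\mu}{2}-\sqrt{\mu^2/4-u}$ on $B^{s}_{\infty,\infty}\cap L^\infty$ depends on the distance to the singular value $\mu^2/4$, which is precisely the reason why the uniform gap $\epsilon$ along the selected subsequence is needed.
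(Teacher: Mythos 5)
Your proof is correct and, as far as one can tell, follows the same route as the result the paper outsources to \cite{EK11}: a compactness/bootstrap argument showing that a uniform gap between $\max\varphi$ and $\mu/2$ along a sequence $s_n\to\infty$ would force a uniform $C^\alpha$-bound, contradicting blow-up. A few small points worth double-checking in the write-up. The negation of the claimed limit is indeed of the stated form only because $\mu(s)/2-\max\varphi(s)\ge 0$ along the branch; this follows from Theorem~\ref{thm:nodal} together with Theorem~\ref{thm:noperiod} (nonconstancy and the nodal pattern), so it is worth saying so. The uniform lower bound $L\varphi(s_n)\ge \mu(s_n)-1$ (from $\|K\|_{L^1}=1$ and the a priori bound $\varphi>\mu-1$ of Corollary~\ref{cor:mubound}) should be stated explicitly, since the Nemytskii theorem \cite[Theorem~2.87]{BahouriCheminDanchin11} requires the argument to range in a \emph{compact} subinterval of the domain of analyticity, not merely to stay below the singular value $\mu^2/4$; with the two-sided bound in place, the operator norm of $u\mapsto \mu/2-\sqrt{\mu^2/4-u}$ on $B^{s}_{\infty,\infty}\cap L^\infty$ is controlled by $\epsilon$ and the compact $\mu$-range, as you say. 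Finally, two iterations of the gain-of-$\tfrac12$ scheme (getting from $L^\infty$ up to $\mathcal C^1=B^1_{\infty,\infty}$) suffice to dominate $C^\alpha$ for the fixed $\alpha\in(\tfrac12,1)$ used throughout Section~\ref{sec:global}. The concluding deduction that (ii) follows --- that $(\varphi(s)+\delta_s,\mu(s))\in\partial U$ with $\delta_s=\mu(s)/2-\max\varphi(s)$ and $\|\delta_s\|_{C^\alpha}=\delta_s$ --- is correct.
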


We also have the following:
\begin{proposition}
In Theorem~\ref{thm:global_whit}, alternative (ii) implies alternative (i).
\end{proposition}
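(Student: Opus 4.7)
The strategy is to derive a contradiction with Theorem~\ref{thm:regularity II}(iii) by extracting a limiting highest wave that has too much regularity. Suppose alternative (ii) holds while alternative (i) fails. The failure of (i), combined with the a priori confinement of $\mu(s)$ to a compact subinterval of $(0,1)$ granted by Corollary~\ref{cor:essentialbound} and Remark~\ref{rem:wavespeed bounded away from 1}, gives a sequence $s_n \to \infty$ along which $\|\varphi(s_n)\|_{C^\alpha(\s_P)} \le M$ for some $M > 0$. Alternative (ii), on the other hand, yields a sequence along which $\mu(\cdot)/2 - \max \varphi(\cdot) \to 0$. The first reduction step is to align the two: using that $\mathfrak{R}_P$ is bounded away from $\partial U$ on every compact interval of $s$ (small-amplitude solutions near the bifurcation point are uniformly far from the crest condition) and that loops are excluded by Theorem~\ref{thm:noperiod}, one arranges for both conditions to hold along the same sequence $(s_n)$.

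By Arzel\`a--Ascoli (or Lemma~\ref{lemma:convergence}), after a further subsequence $\varphi(s_n) \to \varphi_0$ uniformly and $\mu(s_n) \to \mu_0 \in (0,1)$. The limit $\varphi_0$ lies in $C^\alpha(\s_P)$ by lower semicontinuity of the H\"older seminorm under pointwise convergence, and solves \eqref{eq:steadywhitham} by continuity of $L$ and of the quadratic nonlinearity under uniform convergence. Since $\varphi(s_n) \in \mathcal{K}\setminus\{0\}$ by Theorem~\ref{thm:noperiod}, the limit $\varphi_0$ is even and nondecreasing on $(-P/2,0)$, so its maximum is attained at $x=0$, and this maximum equals $\mu_0/2$ by the choice of sequence. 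Moreover $\varphi_0$ is nonconstant, since by Corollary~\ref{cor:mubound} the only constant solutions with $\mu_0 \in (0,1)$ are $0$ and $\mu_0 - 1 < 0$, whereas $\varphi_0(0) = \mu_0/2 > 0$.

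Therefore $\varphi_0$ satisfies the hypotheses of Theorem~\ref{thm:regularity II}, and part (iii) of that theorem produces $c_1 > 0$ such that $\mu_0/2 - \varphi_0(x) \ge c_1|x|^{1/2}$ for $|x|\ll 1$. On the other hand, $\varphi_0 \in C^\alpha(\s_P)$ with $\alpha > 1/2$ gives $\mu_0/2 - \varphi_0(x) = \varphi_0(0) - \varphi_0(x) \le \|\varphi_0\|_{C^\alpha}|x|^\alpha$, so that $c_1 \le \|\varphi_0\|_{C^\alpha}|x|^{\alpha - 1/2} \to 0$ as $x \to 0$, a contradiction. Hence (i) must hold whenever (ii) does.

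The hard part is the reduction step: securing a single sequence $s_n \to \infty$ along which $\varphi(s_n)$ is simultaneously bounded in $C^\alpha$ and witnesses approach to $\partial U$. One must rule out oscillatory behaviour of the curve in which bounded $C^\alpha$-norms occur on one unbounded set of parameters while $\partial U$ is approached along a different one; the absence of loops (Theorem~\ref{thm:noperiod}), together with the nodal pattern preserved along $\mathfrak{R}_P$ and the local real-analytic structure of the curve from Theorem~\ref{thm:global_whit}, is what makes this possible. Once the reduction is in place, the conflict between the sharp $|x|^{1/2}$-floor of Theorem~\ref{thm:regularity II}(iii) and the $C^\alpha$-ceiling with $\alpha > 1/2$ closes the argument immediately.
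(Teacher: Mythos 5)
Your proof is correct and takes essentially the same approach as the paper's: derive a contradiction between the universal $|x|^{1/2}$ lower bound on $\mu/2 - \varphi$ near the crest and a uniform $C^\alpha$ bound with $\alpha > 1/2$. The one structural difference is that you take a detour through a limiting wave $\varphi_0$ and invoke the heavier Theorem~\ref{thm:regularity II}(iii), whereas the paper applies Lemma~\ref{lemma:essentialbound} directly to the sequence $(\varphi_n)$: from $\mu_n/2 - \varphi_n(0) \to 0$ and $\mu_n/2 - \varphi_n(x_0) \gtrsim_{K,P} |x_0|^{1/2}$ one gets $\varphi_n(0) - \varphi_n(x_0) \gtrsim |x_0|^{1/2}$ for $n$ large, hence $[\varphi_n]_{C^\alpha}\gtrsim |x_0|^{1/2-\alpha}$, which is unbounded as $x_0\to 0$; no limit extraction or semicontinuity argument is needed. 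Since Theorem~\ref{thm:regularity II} is itself built on Lemma~\ref{lemma:essentialbound}, the paper's route uses strictly less machinery, though yours is perfectly valid.

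On the alignment step you flag as the "hard part": the paper's own proof does not make it explicit either. The correct resolution is via the precise form of the Buffoni--Dancer--Toland alternative underlying Theorem~\ref{thm:global_whit}: absent a closed loop, $\|\mathfrak{R}_P(s)\|_{C^\alpha\times\R}+1/\dist(\mathfrak{R}_P(s),\partial U)$ diverges as $s\to\infty$, so if the norm stays bounded along a subsequence $s_n\to\infty$ (failure of (i)) then the distance to $\partial U$ must vanish along that very subsequence. Your sketch via "analyticity" and "nodal pattern preserved along $\mathfrak{R}_P$" gestures at this but does not establish it. Since the paper elides it as well, and the final Theorem~\ref{thm:main} only needs that at least one of (i), (ii) occurs (with (iii) excluded and (i)$\Rightarrow$(ii) proved separately), this does not undermine the proposal, but you should either quote the combined form of the alternative or restructure to avoid needing a single aligned sequence.
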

\begin{proof}
Assume for a contradiction that alternative (ii), but not alternative (i), in Theorem~\ref{thm:global_whit} occurs. Then there exists a sequence \((\varphi_n,\mu_n)\) of even solutions to the steady Whitham equation \eqref{eq:steadywhitham} satisfying \(\varphi_n^\prime \geq 0\) on \((-P/2,0)\), \(\varphi_n < \frac{\mu_n}{2}\), and 
\[
\lim_{n\to \infty} \left| \frac{\mu_n}{2} - \varphi_n(0) \right| = 0, 
\]
while \(\varphi_n\) remains bounded in \(C^\alpha(\s)\), \(\alpha > \frac{1}{2}\). Taking a limit along a subsequence  in \(C^{\alpha'}(\s)\), \(\frac12 < \alpha' < \alpha\), we reach a contradiction to 
Lemma~\ref{lemma:essentialbound}, and hence alternative (i) in Theorem~\ref{thm:global_whit} occurs.
\end{proof}

We are now at the final building block for our main result. Pick any sequence \((s_n)_n\) with \(s_n \to \infty\)  as \(n \to \infty\). According to Remark~\ref{rem:wavespeed bounded away from 1} and Corollary~\ref{cor:essentialbound},  \(\mu(s_n)\) is bounded (and bounded away from \(0\) and \(1\)), whence Lemma~\ref{lemma:convergence} implies the existence of a subsequence \((\varphi_{n_k})_k\) converging uniformly to a solution \(\varphi_0\) as \(k \to \infty\). Let  \(\mu_0\) be the wave speed associated to \(\varphi_0\).  By the nodal properties of \(\varphi_{n_k}\), it immediately follows that \(\varphi_0(0) = \frac{\mu_0}{2}\).  Thus, in view of Theorem~\ref{thm:regularity II}, we have proved:

\begin{theorem}\label{thm:main}
In Theorem~\ref{thm:global_whit}, alternatives (i) and (ii) both occur.  Given any unbounded  sequence of positive numbers \(s_n\), there exists a limiting wave obtained as the uniform limit of a subsequence of \((\varphi({s_n}))_n\). The limiting wave solves the steady Whitham equation~\eqref{eq:steadywhitham} with 
\[
\varphi(0) = {\textstyle \frac{\mu}{2}} \quad\text{ and }\quad \varphi \in C^{1/2}(\R). 
\]
It is even,  strictly increasing on \((-P/2,0)\), smooth on \(\R \setminus P\Z\), and has H\"older regularity exactly \(\frac{1}{2}\) at \(x \in P\Z\). 
\end{theorem}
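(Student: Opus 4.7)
The plan is essentially to assemble ingredients already developed. First, I would combine the three dichotomy results of this section: Theorem~\ref{thm:noperiod} rules out alternative (iii) of Theorem~\ref{thm:global_whit}; Proposition~\ref{prop:i implies ii} gives (i)$\Rightarrow$(ii); and the proposition immediately before the statement gives (ii)$\Rightarrow$(i). Since at least one alternative must hold and (iii) is excluded, (i) and (ii) both occur along the primary bifurcation branch.

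Next, I would fix an arbitrary unbounded sequence \((s_n)_n\) and observe that the a priori bounds \(\mu(s_n)\le 2\) from \eqref{eq:mu leq 2}, \(\mu(s_n)<1\) from Remark~\ref{rem:wavespeed bounded away from 1}, and \(\mu(s_n)\gtrsim 1\) from Corollary~\ref{cor:essentialbound} together confine \(\mu(s_n)\) to a compact subinterval of \((0,1)\). Lemma~\ref{lemma:convergence} then delivers a subsequence \((\varphi_{n_k})_k\) converging uniformly to some continuous \(\varphi_0\), with \(\mu(s_{n_k})\to \mu_0\in (0,1)\). Passing to the limit in \eqref{eq:steadywhitham} is painless, since \(L\) is bounded on \(L^\infty(\R)\) and squaring is continuous on uniformly bounded sets, so \((\varphi_0,\mu_0)\) satisfies the steady Whitham equation. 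Evenness, \(P\)-periodicity, and the property of being nondecreasing on \((-P/2,0)\) all pass to the uniform limit; and the identity \(\varphi_{n_k}(0)=\max \varphi_{n_k}\) combined with alternative (ii) forces \(\varphi_0(0)=\mu_0/2\).

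The one step requiring genuine care---which I expect to be the main obstacle---is verifying that \(\varphi_0\) is nonconstant, which is needed to apply Theorem~\ref{thm:regularity II}. The constant solutions of \eqref{eq:steadywhitham} are precisely \(\varphi\equiv 0\) and \(\varphi\equiv \mu-1\), and neither is compatible with the established pair \(\varphi_0(0)=\mu_0/2\), \(\mu_0\in (0,1)\): the first is ruled out by \(\varphi_0(0)>0\), while the second would force \(\mu_0=2\). Having secured nonconstancy, Theorem~\ref{thm:regularity II} produces smoothness on \((-P,0)\)---which I extend to \(\R\setminus P\Z\) by evenness and \(P\)-periodicity---together with global \(C^{1/2}(\R)\)-regularity and the sharp lower \(C^{1/2}\)-bound at \(x\in P\Z\). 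Strict monotonicity on \((-P/2,0)\) then follows from the double-symmetrisation argument already used inside the proof of Lemma~\ref{lemma:essentialbound} for the case \(\varphi(0)=\mu/2\), applied to the limit wave itself. All of the heavy lifting lies in the preceding sections; here the work is to stitch the ingredients together and confirm that no alternative collapses the limit to a trivial solution.
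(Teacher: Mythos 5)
Your proposal is correct and follows essentially the same route as the paper: rule out alternative (iii) via Theorem~\ref{thm:noperiod}, use the two propositions to show (i) and (ii) are equivalent and hence both occur, extract a uniformly convergent subsequence via Lemma~\ref{lemma:convergence}, and identify the limit's properties through Theorem~\ref{thm:regularity II}. You go a little further than the paper's terse final paragraph by explicitly verifying the nonconstancy hypothesis of Theorem~\ref{thm:regularity II} (neither \(\varphi_0\equiv 0\) nor \(\varphi_0\equiv \mu_0-1\) is compatible with \(\varphi_0(0)=\mu_0/2\) and \(\mu_0\in(0,1)\)), a check the paper leaves implicit; this is a genuine gap you correctly anticipated and filled.
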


\begin{acknowledgments}
M.E. would like to thank Eugenia Malinnikova for an original idea about proving the positivity of the kernel \(K\), which later led to the proof of \(K\) being convex described in \cite{E15_highest}. M.E. is also thankful to David Lannes and Mathew Johnson for their hospitality and for sharing their expertise  during visits when part of this research was carried out. Both authors are grateful to Mateusz Kwa\'snicki for helpful comments on Stieltjes functions and subordinate Brownian motions. Finally, we thank an unknown referee of this paper who did a very thorough job proof-checking it, and suggested several corrections and improvements.
\end{acknowledgments}

{\small

}

\end{document}